\newcommand{\RomanNumeralCaps}[1]
\theoremstyle{plain}
\newtheorem{thrm}{Theorem}[section]
\newtheorem{lmm}[thrm]{Lemma}
\newtheorem{crllr}[thrm]{Corollary}
\newtheorem{prpstn}[thrm]{Proposition}
\theoremstyle{definition}
\newtheorem{rmrk}[thrm]{Remark}
\def\xdif{\,{\rm d}}
\DeclareMathOperator{\Div}{div}
\newcommand{\R}{\mathbb R}
\newcommand{\U}{{\bf U}}
\newcommand{\e}[1]{{\bf e}_{#1}}
\newcommand{\normal}{{\bf n}}
\newcommand{\source}{u_b}
\newcommand{\Ge}{{\widetilde G}}
\newcommand{\HDiv}{{H(\Div, \Omega)}}
\newcommand{\V}{\widetilde {\bf U}}
\newcommand{\normTraceS}{\gamma_{1,s}}
\newcommand{\normTraceB}{\gamma_{1,b}}
\newcommand{\trace}{\gamma_0}
\newcommand{\traceS}{\gamma_{0,s}}
\newcommand{\traceB}{\gamma_{0,b}}
\newcommand{\mG}{{\mathcal G}}
\newcommand{\mH}{{\mathcal H}}
\newcommand{\mD}{{\mathcal D}}
\newcommand{\mV}{{\mathcal V}}
\newcommand{\intdom}{\int_{\Omega}}
\newcommand{\intb}{\int_{\Gamma_b}}
\newcommand{\ints}{\int_{\Gamma_s}}
\newcommand{\PhiAlpha}{{\bm\Phi}}
\newcommand{\mGAlpha}{{\bm G}}
\numberwithin{equation}{section}
\begin{document}
%%-----------------------------
%%      the top matter
%%-----------------------------

\title{Well-posedness and potential-based formulation for the propagation of hydro-acoustic waves and tsunamis}

\footnotetext[1]{RWTH Aachen University, Math 111810, Templergraben 55, 52062 Aachen, Germany} 
\renewcommand{\thefootnote}{\fnsymbol{footnote}} 
\footnotetext[1]{Corresponding author: dubois@eddy.rwth-aachen.de} 
\renewcommand{\thefootnote}{\arabic{footnote}} 

\footnotetext[2]{Inria, M3DISIM Team, Inria Saclay, 1 rue Honoré d’Estienne d’Orves, 91120 Palaiseau, France} 

\footnotetext[3]{Seismology Group, Institut de Physique du Globe de Paris, Université Paris Diderot, Sorbonne Paris Cité, 1 rue Jussieu, 75005 Paris, France}

\footnotetext[4]{Inria, Team ANGE, Inria Paris, 2 rue Simone Iff, 75012 Paris and Laboratoire Jacques-Louis Lions, Sorbonne Université, 75005 Paris, France} 

\footnotetext[5]{Laboratoire Jacques-Louis Lions, Sorbonne Université, 75005 Paris, France}

\author{
\renewcommand{\thefootnote}{\arabic{footnote}} 
Juliette Dubois \footnotemark{} 
\renewcommand{\thefootnote}{\fnsymbol{footnote}} 
\footnotemark[1]{}
\renewcommand{\thefootnote}{\arabic{footnote}} ,
Sébastien Imperiale \footnotemark{} , 
Anne Mangeney \footnotemark{} $^{, 4}$, 
%\and  
%\renewcommand{\thefootnote}{\arabic{footnote}} 
Jacques Sainte-Marie \footnotemark{} $^{,5}$
}

%\date{...}
%
%
\maketitle

%%-----------------------------
%%      your text
%%-----------------------------

\begin{abstract} 
We study a linear model for the propagation of hydro-acoustic waves and tsunami in a stratified free-surface ocean.
A formulation was previously obtained by linearizing the compressible Euler equations. 
%In a previous work, a formulation
%, here called 'velocity-field formulation' and 
%based on the linearized compressible Euler equation was introduced.
In this paper, we introduce a new formulation written with a generalized potential.
% and called 'potential-based formulation'. 
The new formulation is obtained by studying the functional spaces and operators associated to the model. 
The  mathematical study of this new formulation is easier and the discretization is also more efficient than for the previous formulation. 
We prove that both formulations are well posed and show that the solution to the first formulation can be obtained from the solution to the second.
Finally, the formulations are discretized using a spectral element method, and we simulate tsunamis generation from submarine earthquakes and landslides.
\end{abstract}

\section*{Introduction}
We present and analyze a model describing the propagation of hydro-acoustic waves and tsunami.
Hydro-acoustic waves are acoustic waves propagating in water. They are increasingly used to investigate movements of the seabed, as they can complete information brought by seismic waves \cite{cecioni_tsunami_2014, gomez_near_2021, caplan-auerbach_hydroacoustic_2001}. 
Hydro-acoustic waves could improve early-warning systems of tsunamis generated by submarine earthquakes or landslides. 
To this aim, it is relevant to study models coupling acoustic waves and tsunamis.
%To this aim, studying a model coupling acoustic and gravity waves is relevant.

The traditional model for hydro-acoustic waves and tsunamis combines a linear acoustic equation in the domain and the linearized free-surface equation of an incompressible fluid \cite{ewing_proposed_1950}. 
Another linear model based on an irrotational flow assumption was obtain with an Eulerian-Lagrangian approach \cite{longuet-higgins_theory_1950}.
However, the mathematical analysis for those models is lacking.
In particular, it is not clear whether the systems preserve an energy. 
Energy preservation is a key element for ensuring stable numerical schemes. 
In a previous work \cite{dubois_acoustic_2023} we derived a new formulation from the compressible Euler equation in Lagrangian coordinates. 
The resulting model consists in a wave-like equation and the system preserves an energy under a realistic condition on the background stratification.

The system can be seen as a particular case of the Galbrun equation with a vanishing mean flow and a non-homogeneous boundary condition of Dirichlet type. 
Most studies on the Galbrun equation focus on the harmonic regime, and the functional framework for its well-posedness is still subject to studies (see the review paper \cite{maeder_90_2020}).
The literature on the analysis of the evolution problem is scarcer, but we can mention \cite{bonnet-ben_dhia_regularisation_2006, hagg_well-posedness_2021}. 
The Galbrun equation with a uniform mean flow and with homogeneous boundary conditions is studied in \cite{bonnet-ben_dhia_regularisation_2006} using a regularization method.
In \cite{hagg_well-posedness_2021}, it is shown that the solution to the Galbrun equation can be deduced from the solution to the linearized Euler equation. 

%In this work, we show existence and uniqueness for the velocity field solution to the linearized compressible Euler equations in Lagrangian coordinates with gravity in the case of a vanishing mean flow and for non-homogeneous boundary conditions. 
In this work, we show existence and uniqueness for the velocity field solution to the linear compressible Euler equations with gravity, non-homogeneous boundary conditions and no mean flow. 
%The Euler equations are written in Lagrangian coordinates and linearized around an equilibrium state with no mean flow.
Moreover we propose a set of PDE, involving two scalar potentials, that can be solved instead of finding a solution to the linearized compressible Euler equations. The mathematical study is easier  and the discretization more efficient that the velocity-field formulation.
Finally, we present several numerical results and simulation to assess the properties of the introduced models. 
The simulations reproduce hydro-acoustic waves and tsunamis generated by earthquakes and submarine landslides. We show that the interference pattern characteristics of landslide-generated hydro-acoustic waves is reproduced by the model, providing a unique tool to detect and characterize the landslide source. \\

In more details, the article is organized as follows. We first continue this introduction by recalling briefly {\bf the velocity-field formulation} obtained in \cite{dubois_acoustic_2023} and present the new formulation (denoted here  {\bf potential-based formulation}) just after. In Section \ref{sec:preliminary}, the functional spaces and the operators used for studying the two mentioned formulations are introduced. 
In Section \ref{sec:Potential}, we study the {\bf  the potential-based formulation}. Even though this formulation is new,  we start with this one because the analysis is much more direct than the analysis of {\bf the velocity-field formulation}. 
The latter is then studied in Section \ref{sec:Velocity}. The main difficulty of this section lies in finding an adapted lifting operator for the boundary term. %representing a submarine landslide or earthquake.
The correspondence between the two formulations is discussed in Section \ref{sec:Equivalence}; we prove that the solution to the {\bf the velocity-field formulation} can be computed using the solution to {\bf  the potential-based formulation} (the reciprocal is formally true but of lesser interest and is not studied in detail). 
In Section \ref{sec:Approximation} we describe how the two formulations are discretized using spectral finite elements \cite{cohen_higher-order_2001, komatitsch_introduction_1999} and present several numerical experiments. 
A first set of simulations helps validate the model by comparison with the literature. 
It also illustrates the correspondence between the velocity-field formulation and the potential-based formulation.
A second set of simulations investigates the classical hypothesis of irrotational flow using the correspondence between the velocity-field and potential-based formulations.
The last set of simulations is a preliminary work towards the simulation of hydro-acoustic waves generated by landslides. We show that the proposed model is able to recover an interference pattern in the simulated hydro-acoustic waves, characteristic of the Lloyd-mirror effect that occurs during submarine landslides.

Readers more interested in the applications than in the mathematical details regarding the {\bf potential-based formulation} formulation could only read this introductory section and directly go to Section \ref{sec:Approximation}.

\subsection*{Velocity-field formulation}
%\paragraph{The system of partial differential equations.}
 
We denote by $\Omega$ the domain representing an ocean at rest.
The coordinates of $\Omega$ are written $({\bf x}, z)$, with the horizontal coordinate ${\bf x}~\in~\R^{d-1}$, $d=2$ or $d=3$.
The domain is unbounded in the horizontal direction and bounded in the vertical direction, with a fixed surface at $z=H$ and a time-independent topography $z_b({\bf x})$ at the bottom, see Figure~\ref{fig:Domain}. 
The domain is written 
\[ \Omega = \{ ({\bf x}, z) \ | \; {\bf x} \in \R^{d-1}, \;  z_b({\bf x}) \leq z \leq H \},
\] 
and its boundary is denoted $\Gamma = \partial \Omega.$
The topography is assumed to satisfy the following conditions: ${  z_b \in W^{1,\infty}(\R^{d-1}) }$ and there exists positive scalars $(H_-, R_+)$ such that, 
\[
   0 \leq  z_b({\bf x}) \leq H_- < H  \quad   \mbox{ and }   \quad   \nabla z_b({\bf x}) = 0  \mbox{ for }  |{\bf x}| \geq R_+.  
\]
These properties ensure that the domain $\Omega$ does not degenerate and is Lipschitz. The surface and bottom boundaries are respectively
\[
{\Gamma_s = \{ ({\bf x},H ) \ | \ {\bf x} \in \R^{d-1} \} }
\quad  \text{and} \quad 
{\Gamma_b = \{ ({\bf x},z ) \ | \ {\bf x} \in \R^{d-1}, \; z=z_b({\bf x}) \} }.
\]
The considered set of partial differential equation has been introduced in \cite{dubois_acoustic_2023}; it consists in a linear system of equations for the fluid velocity $ \U({\bf x}, z, t)$ and reads
\begin{equation} \label{eq:Intro:V_Vol}
    \rho_0 \frac{\partial^2 \U}{\partial t^2} 
- \nabla \left(
	\rho_0 c_0^2\nabla \cdot \U
	- \rho_0 g \U \cdot  \e{z} \right)
- \nabla \cdot(\rho_0 g \U) \,  \e{z}
= 0, \quad \text{ in } \Omega \times [0,T].
 \end{equation}
Equation \eqref{eq:Intro:V_Vol} represents the propagation of the acoustic, internal gravity and surface gravity waves in a  stratified fluid. The vector $  \e{z} = (0\ 0\ 1)^t$  is the unit vector along the z-axis. The constant scalar $g>0$ is the acceleration of gravity.
The parameters $\rho_0$ and $c_0$ are respectively the fluid density at equilibrium and the sound speed. They depend on $z$ only, and we make the following assumptions, 
\begin{equation} 
\rho_0 \in C^1([0,H]) \quad  \mbox{ and } \quad  c_0  \in C^0([0,H]). 
\end{equation}
For the well-posedeness of the problem, the velocity and the density must satisfy some usual non-degeneracy properties. 
We assume that there exists positive scalars $(\rho_-, \rho_+)$ and $(c_-, c_+)$ such that
\[
  \rho_- \leq  \rho_0(z) \leq \rho_+ \quad \mbox{ and }\quad  \, c_- \leq  c_0(z) \leq c_+.
\]
Following the literature \cite{gill_atmosphere-ocean_1982}, we also defined the scalar field $N^2$ as
\begin{equation}\label{def_of_N}
 N^2(z) = - \frac{g}{\rho_0(z)}\frac{d \rho_0(z)}{dz} - \frac{g^2}{c_0^2(z)}.
\end{equation}
Such field is called the Brunt-Väisälä frequency, or buoyancy frequency. 
The case $N^2 < 0$ corresponds to a fluid that is denser above and lighter below, hence it is an unstable equilibrium. 
Since the equations were obtained by linearizing around a stable equilibrium, we assume in the following $N^2 \geq 0$.
The case $N^2=0$ corresponds to the case of a constant temperature, in which case the fluid is called barotropic.

Equation \eqref{eq:Intro:V_Vol} is completed with boundary conditions. 
On the bottom $\Gamma_b$ we consider a localized displacement of amplitude $ u_b $ of the seabed, caused for example by a submarine earthquake,  landslide or caldera collapse.
On the surface $\Gamma_s$ a stress-free boundary condition is applied. Here the stress is a pressure \cite{dubois_acoustic_2023}, and is proportional to the divergence of the velocity field; hence the boundary conditions read 
\begin{equation} \label{eq:Intro:V_BC}
\U \cdot \normal_b = \source
\quad \text{ on } \Gamma_b \times [0,T],
\qquad \nabla \cdot \U = 0  
\quad \text{ on } \Gamma_s \times [0,T],
\end{equation}
where $\normal_b$ is the outward unitary normal of the domain $\Omega$ on $\Gamma_b$. The regularity of the displacement $u_b$ will be stated later.
Finally, in our context it is relevant to choose vanishing initial conditions, which also simplifies the forthcoming analysis, 
\begin{equation}\label{eq:Intro:V_CI}
\U({\bf x}, z, 0) = 0,
\qquad 
\frac{\partial \U}{\partial t} ({\bf x}, z, 0)=0 \quad \text{ on } \Omega.
\end{equation}

%%%%%%%%%%%%%%%%%%%%%%%%%%%%%%%% PARAGRAPH %%%%%%%%%%%%%%%%%%%%%%%%%%%

\subsection*{Potential-based formulation} 
Problem \eqref{eq:Intro:V_Vol}-\eqref{eq:Intro:V_BC} can be written as an abstract wave equation using an unbounded linear operator $G$,
\begin{equation}
        \frac{d^2 \U}{d t^2} + G^* \Ge \U = 0,
\label{eq:Intro:Formal}
\end{equation}
where $G^*$ is the adjoint of $G$ and $\Ge$ is an extension of $G$. The introduction of this non-symmetric formulation using the extension $\Ge$ is motivated by the presence of a non-homogeneous essential condition in the boundary conditions (\eqref{eq:Intro:V_BC}). 
The expression \eqref{eq:Intro:Formal} will be obtained rigorously in Section \ref{sec:preliminary}. One originality of this work is to construct and analyze a ``dual'' or ``adjoint'' wave-like problem for a new unknown $ \Phi $, satisfying
\begin{equation}
    \frac{d^2 \Phi}{d t^2} +  \Ge G^* \Phi = 0.
\end{equation}
Such problem is shown to be equivalent -- in a sense given rigorously later -- to the problem \eqref{eq:Intro:Formal}. Moreover, it presents several advantages from mathematical and numerical perspectives. 
The new unknown $\Phi$ has three scalar components, $\Phi~=~(\varphi, \psi, \gamma)^t$,  where $ \varphi $ and $ \psi $ are scalar fields and $\gamma = \varphi_{|\Gamma_s}$. Later, we show that 
$\Phi$ is related to the velocity $\U$ by the formula
\begin{equation} \label{eq:Intro_UPhi}
    \U = G^\ast \Phi = - \nabla \varphi
+ N \left(\psi + \frac{N}{g} \varphi \right)  \e{z},
\end{equation}
hence it can be seen as a generalized potential.
We also show that $ \varphi $ and $ \psi $ satisfy a second set of coupled partial differential equations describing the same physical system as \eqref{eq:Intro:V_Vol}-\eqref{eq:Intro:V_BC}, more precisely
\begin{align}
\frac{\partial^2 \varphi}{\partial t^2} 
+ c_0^2 \nabla \cdot  \left(
 - \nabla \varphi
+ N \left(\psi + \frac{N}{g} \varphi \right)  \e{z} \right)
+ g \frac{\partial \varphi}{\partial z}
- g N \left(\psi + \frac{N}{g} \varphi \right)
= 0, 
\ \text{ in } \Omega \times [0,T],\label{eq:Intro:P_Vol}
\\
\frac{\partial^2 \psi}{\partial t^2} 
- N \frac{\partial \varphi}{\partial z}
+ N^2 \left(\psi + \frac{N}{g} \varphi \right) 
 = 0, \ \text{ in } \Omega \times [0,T]. \label{eq:Intro:P_Vol2}
\end{align}
The system is completed with boundary conditions. Using  \eqref{eq:Intro:V_BC} and \eqref{eq:Intro_UPhi}, we find on $\Gamma_b$  
\begin{equation}
 \U \cdot \normal_b
 =  - \nabla \varphi \cdot \normal_b
+ N \left(\psi + \frac{N}{g} \varphi \right) (  \e{z} \cdot \normal_b) = u_b, \ \text{ on } \Gamma_b \times [0,T].
\end{equation}
This condition is a Robin boundary conditions for the potentials $(\varphi, \psi)$. 
It is easier to consider than condition \eqref{eq:Intro:V_BC} for both the analysis and the discretization.
Finally, to obtain the boundary condition on $\Gamma_s$ we observe that  \eqref{eq:Intro:P_Vol} can be rewritten, using \eqref{eq:Intro_UPhi},
\[
\frac{\partial^2 \varphi}{\partial t^2} 
+ c_0^2 \nabla \cdot  \U
- g  \U \cdot  \e{z}
= 0, \quad \text{ in } \Omega \times [0,T].
\]
Formally, evaluating the equation above on the boundary $\Gamma_s$ and using   \eqref{eq:Intro:V_BC} we obtain a boundary condition involving a second-order time derivative that accounts for surface gravity waves, 
%{\bf d'ou vient cette equation ci-dessous?}
\begin{equation}
\frac{\partial^2 \varphi}{\partial t^2} 
- g  \U  \cdot \normal_s = 0, \ \text{ on } \Gamma_s  \times [0,T],
\end{equation} 
where $\normal_s$ is the outward unitary normal of the domain $\Omega$ on $\Gamma_s$.
The initial conditions are deduced from \eqref{eq:Intro:V_CI}, they read
\begin{equation}
\varphi({\bf x}, z, 0) 
= \frac{\partial \varphi}{\partial t} ({\bf x}, z, 0) 
= 0, 
\quad 
\psi({\bf x}, z, 0) 
=  \frac{\partial \psi}{\partial t} ({\bf x}, z, 0) = 0
\quad \text{ on } \Omega .
\label{eq:Intro:P_CI}
\end{equation}
When $d=3$, the system \eqref{eq:Intro:P_Vol} - \eqref{eq:Intro:P_CI} involves only two scalar fields ($\varphi$ and $\psi$), compared to three for the velocity based formulation \eqref{eq:Intro:V_Vol}. 
Moreover, this system is a generalization to the one commonly introduced in hydrodynamics \cite{gill_atmosphere-ocean_1982}.
Indeed, when considering the barotropic case, we have $N=0$ and the system of partial differential equations \eqref{eq:Intro:P_Vol}-\eqref{eq:Intro:P_Vol2} reduces to
\begin{align}
\frac{\partial^2 \varphi}{\partial t^2}
- c_0^2 \Delta  \varphi
+ g \frac{\partial \varphi}{\partial z}
= 0, \quad \text{ in } \Omega \times [0,T],
\label{eq:Intro:LH1}
\\
\frac{\partial^2 \psi}{\partial t^2}
 = 0, \quad \text{ in } \Omega \times [0,T].
\label{eq:Intro:LH2}
\end{align}
Because of the vanishing initial conditions, we have that $\psi=0$. The system is then described by the function $\varphi$ only. The boundary conditions are also simplified,
\begin{equation}
\frac{\partial^2 \varphi}{\partial t^2} 
+ g \nabla \varphi
\cdot \normal_s = 0, 
\quad \text{ on } \quad \Gamma_s  \times [0,T] \quad \mbox{ and }  \quad \nabla \varphi \cdot \normal_b
 = u_b, 
 \quad \text{ on } \quad \Gamma_b  \times [0,T].
\end{equation}
This simplified system corresponds to the system of equations that was already introduced in the literature \cite{dubois_acoustic_2023, longuet-higgins_theory_1950}.

%%%%%%%%%%%%%%%%%%%%%%%%%%%%%%%% SUBSECTION %%%%%%%%%%%%%%%%%%%%%%%%%%%

%
\begin{figure}
\centering \includegraphics[height=0.3\hsize]{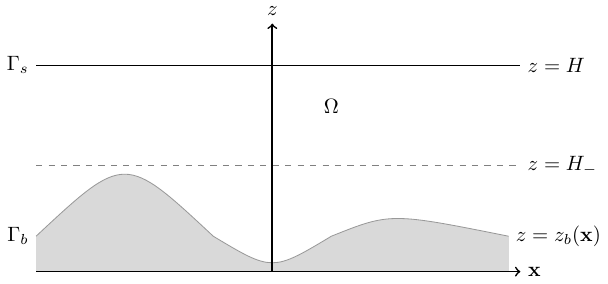}
\caption{The domain $\Omega$}
\label{fig:Domain}
\end{figure}
%%%
%
%
%%%%%%%%%%%%%%%%%%%%%%%%%%%%%%%%%%%%%%%%%%%%%%%%%%%%%%%%%%%%%
%%%%%%%%%%%%%%%%%%%%%%%%%%%%%%%%%%%%%%%%%%%%%%%%%%%%%%%%%%%%%

%%%%%%%%%%%%%%%%%      SECTION    %%%%%%%%%%%%%%%%%%%%%%%%%%%

%%%%%%%%%%%%%%%%%%%%%%%%%%%%%%%%%%%%%%%%%%%%%%%%%%%%%%%%%%%%%
%%%%%%%%%%%%%%%%%%%%%%%%%%%%%%%%%%%%%%%%%%%%%%%%%%%%%%%%%%%%%

\section{Preliminary definitions}\label{sec:preliminary}
In this section, we define functional spaces and operators that will be used throughout the paper.

%%%%%%%%%%%%%%%%%%%%%%%%%%%%%%%% SUBSECTION %%%%%%%%%%%%%%%%%%%%%%%%%%%

\subsection{Hilbert spaces and trace operators} 
We start by introducing the space of $H^1$-functions,
\[
 H^1(\Omega) = \{ \varphi \in L^2(\Omega) \ | \ \nabla \varphi \in L^2(\Omega)^d \},
\]
as well as the usual surjective trace operator
\[
\trace: H^1(\Omega) \to H^{1/2}(\Gamma),
\]
that is the extension to function in $H^1(\Omega) $ of the trace operator $\varphi \mapsto \varphi_{|\Gamma} $, defined for smooth functions.

The forthcoming analysis requires the use of the standard space of square integrable functions with  square integrable divergence,
\begin{equation*}
\HDiv = \{ \U \in L^2(\Omega)^d \ | \ 
\nabla \cdot \U \in L^2(\Omega)\}.
\end{equation*}
The space $\HDiv$ is equipped with the usual scalar product
\[
\forall \, (\U, \V) \in H(\Div, \Omega) \times H(\Div, \Omega), \quad (\U, \V)_\HDiv 
= (\U, \V)_{L^2(\Omega)^d}
+ (\nabla \cdot \U, \nabla \cdot \V)_{L^2(\Omega)}.
\]
In the following we use trace operators acting either on $\Gamma_s$ or on $\Gamma_b$ only, namely
\begin{align*}
 \traceS: H^1(\Omega) \to H^{1/2}(\Gamma_s), \quad  \traceS(\varphi) = \trace (\varphi)_{|\Gamma_s}, \\
 \traceB: H^1(\Omega) \to H^{1/2}(\Gamma_b), \quad  \traceB(\varphi) = \trace (\varphi)_{|\Gamma_b}.
\end{align*}
Since $\Gamma_s$ and $\Gamma_b$ are ``well-separated'' -- i.e., the distance between the two boundary is at least  $H-H_- $ that is strictly positive  --  one can also define surjective normal trace operators acting either on $\Gamma_s$ or on $\Gamma_b$ only. 
We introduce the normal trace operator
\[
\normTraceS :H(\Div, \Omega) \to H^{-1/2}(\Gamma_s).
\]
When applied to smooth functions, it corresponds to the operator
$ \normTraceS(\U)   = (\U \cdot \normal)_{|\Gamma_s}  $.
For functions in $\HDiv$ the operator $\normTraceS$ is defined as follows. 
We introduce a function $\chi$ depending on $z$ only, and satisfying 
\[
\chi \in C^1([0,H]), \quad
\chi(H)=1, 
\quad \text{and} \quad
 \chi(z) = 0 \text{ for } z < H_-.
\]
The normal trace operator on the surface is then defined by
\[
 \forall \, (\U, \varphi)  \in H(\Div, \Omega) \times H^1(\Omega), \quad
 %%%%
  \langle \normTraceS(\U) ,\traceS(\varphi)  \rangle_{\Gamma_s} 
  = ( \nabla \cdot (\chi \U) , \varphi)_{L^2(\Omega)} 
  +  ( \chi  \U , \nabla \varphi)_{L^2(\Omega)^{d}},
\]
where the duality product between  $H^{-1/2}(\Gamma_s)$ and $H^{1/2}(\Gamma_s)$ is denoted $\langle \cdot,\cdot \rangle_{\Gamma_s}$. 
In a similar way, we introduce $\normTraceB$, the normal trace operator acting on $\Gamma_b$ only. 
We denote by $\langle \cdot,\cdot \rangle_{\Gamma_b}$  the duality product between  $H^{-1/2}(\Gamma_b)$ and $H^{1/2}(\Gamma_b)$ and define the normal trace for functions in $H(\Div, \Omega)$ by 
\begin{multline*}
 \forall \, (\U, \varphi)  \in H(\Div, \Omega) \times H^1(\Omega), \quad 
 %%%%%
 %\\[4pt]
 \langle \normTraceB(\U) ,\traceB(\varphi)  \rangle_{\Gamma_s} 
 = ( \nabla \cdot \big( (1-\chi) \U \big) , \varphi)_{L^2(\Omega)} 
 +  ( (1 - \chi)  \U , \nabla \varphi)_{L^2(\Omega)^{d}}.
\end{multline*}
Finally, for any function $\U \in H(\Div, \Omega) $, when stating that $ \normTraceS(\U)  \in L^2(\Gamma_s) $ we mean that $ \normTraceS(\U) $ is a function in $  H^{-1/2}(\Gamma_s) $ which can be identified with a function in $ L^2(\Gamma_s) $, and that the duality product reduces to the scalar product in  $L^2(\Gamma_s)$. 
This can be written as follows,
\[
\forall \, \U  \in H(\Div, \Omega), 
\quad  \normTraceS(\U)  \in L^2(\Gamma_s) \;
\Rightarrow  \;  \exists \, f  \in L^2(\Gamma_s) 
\; / \;  \langle \normTraceB(\U) ,\traceB(\varphi)  \rangle_{\Gamma_s} = \int_{\Gamma_s}  f \, \traceB(\varphi) \, \xdif s.
\]
When $\U$ is smooth, the function $f$ is given by $f  = (\U \cdot \normal)_{|\Gamma_s}. $

%%%%%%%%%%%%%%%%%%%%%%%%%%%%%%%% SUBSECTION %%%%%%%%%%%%%%%%%%%%%%%%%%%

\subsection{The operator $G$ and its extension $\Ge$}
To introduce the operator associated to the evolution problem \eqref{eq:Intro:V_Vol} and the abstract wave equation \eqref{eq:Intro:Formal}, we introduce the Hilbert space
$
\mH = L^2(\Omega)^{d} 
$
equipped with the following weighted scalar product,
\begin{equation}
(\U, \V)_\mH = \intdom \rho_0 \U \cdot \V \xdif x.
\end{equation}
we also define the space $\mG$, 
\begin{equation}
\mG = L^2(\Omega) \times L^2(\Omega) \times L^2(\Gamma_s),
\end{equation}
equipped with the weighted scalar product
\begin{equation}
\forall \, \Phi   = \begin{pmatrix}
\varphi \\ \psi \\ \gamma
\end{pmatrix}  
\in \mG, \; \,  \forall \, \widetilde  \Phi =
  \begin{pmatrix}
\tilde \varphi \\ \tilde  \psi \\ \tilde \gamma
\end{pmatrix} \in \mG, \quad
(\Phi, \widetilde \Phi)_\mG
= \intdom \frac{\rho_0}{c_0^2} \varphi \tilde \varphi \xdif x 
+ \intdom \rho_0 \psi \tilde \psi \xdif x 
+ \ints \frac{\rho_0}{g} \gamma \tilde \gamma \xdif s.
\end{equation}
We first define the operator $\Ge$ used in Equation \eqref{eq:Intro:Formal}. The domain of $\Ge$ is denoted  $\mD(\Ge) \subset \mH$ and is defined by
\begin{equation}
\mD(\Ge) = \{ \U \in \HDiv 
\ | \  \normTraceS(\U)  \in L^2(\Gamma_s) \}.
\end{equation}
The operator $\Ge:\mD(\Ge) \subset \mH \to \mG$ is then defined by
\begin{equation}\label{def_Ge}
\forall \, U \in D(\Ge), \; \,
\Ge \, \U 
= \begin{pmatrix}
c_0^2
\left( \nabla \cdot \U 
	- \frac{g}{c_0^2} \U \cdot  \e{z} 
\right)
\\[4pt]
N \U \cdot  \e{z}
\\[4pt]
- g \normTraceS( \U)
\end{pmatrix}.
\end{equation}
It can be shown that the operator $\Ge$ is closed and densely defined.  
As already mentioned, it is useful to see the operator $\Ge$ as an extension of an operator $G$, defined on the domain $\mD(G) \subset \mH$, given by
\begin{equation}
\mD(G) =  \{ \U \in \HDiv 
\ | \  \normTraceS(\U)  \in L^2(\Gamma_s), \, \normTraceB(\U)  = 0 
%\, (\U \cdot \normal)_{|\Gamma_\ell}   = 0  
\}.
\end{equation}
We have  $\mD(G) \subset \mD(\Ge)$ and by definition $ G $ satisfies, for all  
$ \U \in \mD(G), \; G \U = \Ge \U $, therefore
\begin{equation}\label{def_G}
\forall \, U \in D(G), \; \,
G\, \U 
= \begin{pmatrix}
c_0^2
\left( \nabla \cdot \U 
	- \frac{g}{c_0^2} \U \cdot  \e{z} 
\right)
\\[4pt]
N \U \cdot  \e{z}
\\[4pt]
- g \normTraceS( \U)
\end{pmatrix}.
\end{equation}
The operator  $ G $ is also closed and densely defined.

%%%%%%%%%%%%%%%%%%%%%%%%%%%%%%%% SUBSECTION %%%%%%%%%%%%%%%%%%%%%%%%%%%

\subsection{The adjoint operators $G^*$ and $\Ge^{\, *}$ and a Green's formula}
Since the operators $ G $ and $\Ge $ are densely defined and closed, their adjoint -- denoted  $G^\ast$ and $\Ge^{\, *}$ respectively -- exist and are also densely defined and closed. 
We give their expression in this section. In the following, the space of smooth functions with compact support in $\Omega$ is denoted $\mD(\Omega)$ and $\mD(\overline{\Omega})$ corresponds to the space of smooth functions up to the boundary.

\begin{thrm} The operator $G^\ast:\mD(G^\ast) \subset \mG \to \mH$ is defined by 
\[
    \mD(G^\ast) 
= \{ \Phi=(\varphi, \psi, \gamma)^t \in \mG \ | \ \varphi \in H^1(\Omega), \gamma = \traceS( \varphi) \},
\]
and, for all $  \Phi=(\varphi, \psi, \gamma)^t  \in \mD(G^*) $, 
\begin{equation}\label{def_G_ast}
G^\ast 
\Phi
= - \nabla \varphi
+ N \left(\psi + \frac{N}{g} \varphi \right)  \e{z}.
\end{equation}
\end{thrm}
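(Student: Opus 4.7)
The plan is to prove the two inclusions defining $\mathcal D(G^\ast)$ separately. The main computational tool in both directions is the Green's formula for the $H(\Div,\Omega)$/$H^1(\Omega)$ pairing applied to the weighted field $\rho_0\,\U$, combined with the algebraic identity
\[
-\frac{\rho_0'(z)}{\rho_0(z)} - \frac{g}{c_0^2(z)} = \frac{N^2(z)}{g},
\]
which is a direct consequence of the definition \eqref{def_of_N} of the Brunt--Väisälä frequency.

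For the inclusion $\supseteq$, I would take an arbitrary $\Phi = (\varphi,\psi,\gamma)^t$ in the claimed domain and compute $(G\U,\Phi)_{\mG}$ for $\U \in \mD(G)$. Writing out the three contributions using the weighted inner products of $\mG$ and $\mH$, the surface term becomes $-\intb \rho_0\,\normTraceS(\U)\,\gamma\,\mathrm ds$ (well-defined because $\normTraceS(\U)\in L^2(\Gamma_s)$ by assumption on $\mD(G)$). The volume term $\int_\Omega \rho_0\,\varphi\,\nabla\cdot \U\,\mathrm dx$ is handled via Green's formula for $\rho_0 \U \in \HDiv$ tested against $\varphi \in H^1(\Omega)$, noting that $\nabla\cdot(\rho_0 \U) = \rho_0\nabla\cdot \U + \rho_0'\,\U\cdot\e{z}$ since $\rho_0$ depends only on $z$, that $\normTraceS(\rho_0 \U) = \rho_0(H)\,\normTraceS(\U)$ lies in $L^2(\Gamma_s)$, and that $\normTraceB(\rho_0 \U) = 0$ thanks to the defining condition of $\mD(G)$. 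Combining all the terms, the surface boundary contribution from the $\Div$-integration cancels with the explicit surface term of $(G\U,\Phi)_{\mG}$ because $\gamma = \traceS(\varphi)$, and the bulk $\U\cdot\e{z}$ terms recombine, using the identity above, into exactly $(\U,\,G^\ast\Phi)_{\mH}$ with $G^\ast\Phi$ as in \eqref{def_G_ast}.

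For the reverse inclusion $\subseteq$, suppose $\Phi\in\mD(G^\ast)$, so that there exists $W\in\mH$ with $(G\U,\Phi)_{\mG} = (\U,W)_{\mH}$ for all $\U\in\mD(G)$. First I would test with $\U\in \mD(\Omega)^d$, which automatically lies in $\mD(G)$ and for which both normal traces vanish. The identity then reads, in the sense of distributions,
\[
\langle \nabla(\rho_0\varphi),\U\rangle_{\mD'(\Omega),\mD(\Omega)^d}
= -\int_\Omega \rho_0\,\U\!\cdot\!\Bigl(W + \tfrac{g}{c_0^2}\varphi\,\e{z} - N\psi\,\e{z}\Bigr)\,\mathrm dx,
\]
and the right-hand side extends continuously to $L^2(\Omega)^d$. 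Hence $\nabla(\rho_0\varphi)\in L^2(\Omega)^d$, which (using $\rho_0\in C^1$ bounded below) gives $\varphi\in H^1(\Omega)$ and identifies $W = G^\ast\Phi$ as in the claimed formula, except possibly on the surface. Returning then to the identity with general $\U\in\mD(G)$ and performing the Green's formula computation of the first part, all volume terms cancel and there remains
\[
\intb \rho_0\,\normTraceS(\U)\,\bigl(\traceS(\varphi)-\gamma\bigr)\,\mathrm ds = 0 \qquad \forall\,\U\in\mD(G).
\]
Because one can construct, for every prescribed $f\in L^2(\Gamma_s)$, an element $\U\in\mD(G)$ with $\normTraceS(\U)=f$ and $\normTraceB(\U)=0$ (via a standard lifting built with the cutoff $\chi$ used to define $\normTraceS$ and $\normTraceB$), this forces $\gamma=\traceS(\varphi)$ as elements of $L^2(\Gamma_s)$, completing the characterization of $\mD(G^\ast)$.

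The main obstacle is the careful bookkeeping in the second step, in particular justifying the density/surjectivity argument on $\Gamma_s$ that concludes $\gamma = \traceS(\varphi)$: one must exhibit enough elements of $\mD(G)$ (not merely of $\mD(\Ge)$) whose normal trace on $\Gamma_s$ sweeps out $L^2(\Gamma_s)$ while the bottom normal trace vanishes, which is where the geometric separation $H - H_- > 0$ between $\Gamma_s$ and $\Gamma_b$ and the cutoff function $\chi$ introduced in Section~\ref{sec:preliminary} are essential.
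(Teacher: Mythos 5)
Your proposal is correct and its computational core (Green's formula applied to $\rho_0\,\U$ together with the identity $-\rho_0'/\rho_0 - g/c_0^2 = N^2/g$ coming from the definition of the Brunt--V\"ais\"al\"a frequency) is exactly the one the paper uses. The difference is one of completeness rather than of method: the paper's proof only tests the adjoint identity against $\U \in \mD(\Omega)^d$, which suffices to conclude $\nabla(\rho_0\varphi)\in L^2(\Omega)^d$, hence $\varphi\in H^1(\Omega)$, and to identify the formula for $G^\ast\Phi$, but it never addresses the condition $\gamma = \traceS(\varphi)$ (compactly supported test fields have vanishing normal trace, so the surface component of $\Phi$ is invisible to them), nor does it verify the reverse inclusion, i.e.\ that every $\Phi$ with $\varphi\in H^1(\Omega)$ and $\gamma=\traceS(\varphi)$ actually lies in $\mD(G^\ast)$. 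Your two extra steps fill precisely these gaps: the $\supseteq$ direction via the integration by parts for general $\U\in\mD(G)$, and the identification $\gamma=\traceS(\varphi)$ via the surjectivity of $\normTraceS$ restricted to $\mD(G)$ (for which the explicit lifting $\U({\bf x},z)=f({\bf x})\chi(z)\,\e{z}$, with $\chi$ the cutoff from Section~\ref{sec:preliminary}, indeed works, since $\chi$ vanishes below $H_-$ so the bottom normal trace is zero). So your argument is a strictly more careful version of the paper's; what it buys is a genuine proof of the stated equality of domains, where the paper proves only one inclusion and only partially.
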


\begin{proof} 
Let $\U$ be a function in $\mD(\Omega)^d$, and let $\Phi~=~(\varphi \ \psi \ \gamma)^t$ belong to $\mD(G^\ast) $. It holds, by definition of the adjoint,
  \[
    (G \U, \Phi)_\mG = (\U, G^\ast \Phi)_\mH = (\U, \V)_\mH,
  \]
for some $ \V \in \mH.$ The equality above is developed using the definition of $G$, 
\begin{equation} \label{proof_adjoint}
    (G \U, \Phi)_\mG =   - \langle \nabla (\rho_0 \varphi),  \U  \rangle_{\Omega}  + 
     \intdom \rho_0   \Big(  N \psi - 
  \frac{g}{c_0^2}   
  \varphi \Big)  \U \cdot  \e{z}  \xdif x 
  = (\U, \V)_\mH,
\end{equation}
where  $\langle \cdot,\cdot \rangle_{\Omega} $ correspond to the duality product in  $\mD(\Omega)^d$. The equality \eqref{proof_adjoint} shows that $  \nabla (\rho_0 \varphi) $ belongs to $ L^2(\Omega)^3 $, hence, since $\rho_0$ is smooth, $\varphi \in H^1(\Omega)$. Equation \eqref{proof_adjoint} also shows that
  \[
    G^\ast \Phi = \V = -   \rho_0^{-1} \nabla (\rho_0 \varphi) +   \Big(  N \psi - 
  \frac{g}{c_0^2}    
  \varphi \Big)    \e{z}.    
  \]
  The simpler expression \eqref{def_G_ast} is obtained by, first, distributing the gradient, then, using the definition of the scalar field $N$. 
\end{proof}
 
The following Green formula holds
\begin{lmm}\label{GreenFormula}
For all $\U \in D(\Ge)$ and $ \Phi = (\varphi,\psi,\gamma)^t \in  D(G^*) $, it holds
\begin{equation}\label{eq:GF}
    (\Ge \U, \Phi)_{\mG} 
    =  (\U, G^* \Phi)_{\mH}  + \langle \normTraceB(\U), \, \traceB(\varphi)  
    \rangle_{\Gamma_b} .
\end{equation}
\end{lmm}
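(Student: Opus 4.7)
The plan is to prove the identity by direct computation, unfolding both sides of \eqref{eq:GF} using the definitions \eqref{def_Ge} and \eqref{def_G_ast} together with the weighted scalar products on $\mH$ and $\mG$. Writing $\Phi = (\varphi,\psi,\gamma)^t$ and expanding, the $\mG$-inner product $(\Ge\U,\Phi)_{\mG}$ produces three contributions: a term $\intdom \rho_0 \varphi\,\nabla\cdot\U\,\xdif x$, a term $-\intdom (g\rho_0/c_0^2)\,\varphi\,\U\cdot\e{z}\,\xdif x$, a term $\intdom \rho_0 N\psi\,\U\cdot\e{z}\,\xdif x$, and a boundary contribution $-\ints \rho_0\,\normTraceS(\U)\,\gamma\,\xdif s$ coming from the third component. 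Analogously, expanding $(\U,G^*\Phi)_{\mH}$ produces $-\intdom \rho_0\,\U\cdot\nabla\varphi\,\xdif x$, $\intdom \rho_0 N\psi\,\U\cdot\e{z}\,\xdif x$, and $\intdom (\rho_0 N^2/g)\,\varphi\,\U\cdot\e{z}\,\xdif x$.

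Subtracting, the $\psi$-terms cancel immediately. For the remaining $\varphi\,\U\cdot\e{z}$ contributions, the key algebraic step is to use the definition \eqref{def_of_N} of the Brunt--Väisälä frequency, rewritten as $\rho_0 N^2/g = -d\rho_0/dz - g\rho_0/c_0^2$. This lets me collapse the two coefficients into exactly $d\rho_0/dz$, so that the residual volume integral becomes $\intdom \varphi\,\U\cdot\nabla\rho_0\,\xdif x$ (since $\rho_0$ depends only on $z$). Combining with $\intdom \rho_0\varphi\,\nabla\cdot\U\,\xdif x + \intdom \rho_0\U\cdot\nabla\varphi\,\xdif x$, the Leibniz rule yields
\[
\intdom (\rho_0\varphi)\,\nabla\cdot\U\,\xdif x + \intdom \U\cdot\nabla(\rho_0\varphi)\,\xdif x.
\]

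The next step is to identify this expression with boundary terms. Since $\rho_0\in C^1([0,H])$ and $\varphi\in H^1(\Omega)$, the product $\rho_0\varphi$ also lies in $H^1(\Omega)$. Using the defining identities for $\normTraceS$ and $\normTraceB$ applied to $\rho_0\varphi$ and summing them (so that $\chi$ and $1-\chi$ combine into the partition of unity on $\Omega$), one finds
\[
\intdom (\rho_0\varphi)\,\nabla\cdot\U\,\xdif x + \intdom \U\cdot\nabla(\rho_0\varphi)\,\xdif x
= \langle \normTraceS(\U),\,\traceS(\rho_0\varphi)\rangle_{\Gamma_s} + \langle \normTraceB(\U),\,\traceB(\rho_0\varphi)\rangle_{\Gamma_b}.
\]
On $\Gamma_s$ the function $\rho_0$ reduces to the constant $\rho_0(H)$, and because $\U \in D(\Ge)$ the surface normal trace $\normTraceS(\U)$ is identified with an $L^2(\Gamma_s)$ function, so the first duality pairing is simply $\rho_0(H)\ints \normTraceS(\U)\,\gamma\,\xdif s$ (using $\gamma=\traceS(\varphi)$). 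This exactly cancels the $-\ints \rho_0\,\normTraceS(\U)\,\gamma\,\xdif s$ term produced earlier, and only the bottom-boundary duality pairing survives, which is the claimed identity.

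The main obstacle is purely bookkeeping: tracking where each $\rho_0$ factor comes from (density in the scalar products versus density appearing through the identity for $N^2$) and rewriting sums of volume integrals as a combined divergence applied to $\rho_0 \varphi \U$. The use of the $\chi/(1-\chi)$ partition for the normal traces is essential so that the two surjective trace operators are summed consistently on $\Gamma_s$ and $\Gamma_b$; without it, one would need a single global normal trace operator and a different argument. No further regularity is needed beyond $\U\in D(\Ge)$ and $\Phi\in D(G^*)$, since these give precisely the regularity that makes every bracket well defined.
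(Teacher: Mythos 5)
Your proof is correct, but it takes a genuinely different route from the paper's. The paper first establishes \eqref{eq:GF} for smooth data ($\U \in \mD(\overline\Omega)^d$, $\varphi,\psi \in \mD(\overline\Omega)$) by a classical integration by parts, and then extends to $\mD(\Ge)\times\mD(G^*)$ by invoking the density of $\mD(\overline\Omega)^d$ in $\HDiv$ and of $\mD(\overline\Omega)$ in $H^1(\Omega)$ and $L^2(\Omega)$. You instead work directly with arbitrary elements of the operator domains, replacing the integration by parts by the very definitions of $\normTraceS$ and $\normTraceB$ through the cut-off $\chi$ and $1-\chi$, whose sum reconstitutes the global identity $\intdom w\,\nabla\cdot\U\,\xdif x + \intdom \U\cdot\nabla w\,\xdif x$ applied to $w=\rho_0\varphi$. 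What your approach buys is self-containedness: there is no density argument, and hence no need to verify (even implicitly) that both sides of \eqref{eq:GF} are continuous for the graph norms of $\Ge$ and $G^*$ along the approximating sequences. What it costs is the explicit bookkeeping of the $N^2$ cancellation and of the product rule for $\rho_0\varphi$, which the paper compresses into one line. Both arguments are sound.

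One small point deserves attention: your computation, carried out carefully, yields the bottom term $\langle \normTraceB(\U), \traceB(\rho_0\varphi)\rangle_{\Gamma_b}$, i.e.\ with the weight $\rho_0$ evaluated on $\Gamma_b$ (which is \emph{not} constant there, since $z=z_b({\bf x})$ varies), whereas the statement of the lemma writes $\traceB(\varphi)$ without the weight. This is not a flaw in your argument -- the weighted form is what a correct computation gives, and it is consistent with the source term $\intb \rho_0\, u_b\, \traceB(\tilde\varphi)\,\xdif s$ appearing in the variational formulation \eqref{eq:P:Non_standard_weak} -- but you should not assert that your final expression ``is the claimed identity'' without remarking that the duality pairing in \eqref{eq:GF} must be understood as carrying the density weight (or that $u_b$ absorbs it). On $\Gamma_s$ the issue disappears because $\rho_0$ reduces to the constant $\rho_0(H)$ there, exactly as you observe.
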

\begin{proof}
For $\U \in \mD(\overline \Omega)^d$ and $\Phi =  (\varphi,\psi,\varphi|_{\Gamma_s})^t $ with $\varphi $ and $\psi$  in $  \mD(\overline \Omega)$, we have  
\[
(\Ge \U, \Phi)_{\mG} 
=(\U, G^* \Phi)_{\mH}  +  \int_{\Gamma_b} \varphi_{|\Gamma_b}  \, (\U \cdot \normal_b)_{|\Gamma_b} \xdif s = \langle \normTraceB(\U), \, \traceB(\varphi)  
     \rangle_{\Gamma_b}.
\]
We conclude by using the density of $  \mD(\overline \Omega)^d $ in  $\HDiv$ and the density of $  \mD(\overline \Omega) $ in $ H^1(\Omega)$ and in $L^2(\Omega)$ (see \cite{girault_finite_1986} for details on these standard density results). 
\end{proof}

Thanks to the Lemma \ref{GreenFormula} we deduce the expression of $\Ge^{\, *}$.
\begin{crllr}
  The operator  $\Ge^{\, *}:\mD(\Ge^{\, *}) \subset \mG \to \mH$ is defined by
\begin{equation}
\mD(\Ge^{\, *}) 
= \{ \Phi=(\varphi, \psi, \gamma)^t \in \mG \ | \ \varphi \in H^1(\Omega), \  \traceS( \varphi) = \gamma, \  \traceB(\varphi) = 0\}  \subset \mD(G^\ast) ,
\end{equation}
and  for all $ \Phi \in \mD(\Ge^{\, *}) $, $ \Ge^{\, *} \Phi  =G^\ast \Phi $.
\end{crllr}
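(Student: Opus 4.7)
The plan is to derive the corollary from the Green's formula in Lemma \ref{GreenFormula} together with the elementary fact that $G \subset \Ge$ as operators, hence $\Ge^{\,*} \subset G^*$. I will first characterize $\mD(\Ge^{\,*})$ by extracting the vanishing trace condition on $\Gamma_b$, then verify the reverse inclusion directly from the formula.

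First I would observe that since $\mD(G) \subset \mD(\Ge)$ and $G \U = \Ge \U$ on $\mD(G)$, the adjoint inclusion gives $\mD(\Ge^{\,*}) \subset \mD(G^*)$ with $\Ge^{\,*} \Phi = G^* \Phi$ for every $\Phi \in \mD(\Ge^{\,*})$. In particular, any $\Phi = (\varphi,\psi,\gamma)^t \in \mD(\Ge^{\,*})$ already satisfies $\varphi \in H^1(\Omega)$ and $\gamma = \traceS(\varphi)$, and the second assertion of the corollary (agreement of the two adjoints) is automatic. It remains only to identify the additional constraint inherited from extending $G$ to $\Ge$, which should be $\traceB(\varphi) = 0$.

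To extract this condition, I would apply Lemma \ref{GreenFormula} to an arbitrary $\U \in \mD(\Ge)$ and the given $\Phi \in \mD(\Ge^{\,*}) \subset \mD(G^*)$, obtaining
\[
(\Ge \U, \Phi)_\mG = (\U, G^* \Phi)_\mH + \langle \normTraceB(\U), \traceB(\varphi) \rangle_{\Gamma_b}.
\]
Comparing with the definition of the adjoint, $(\Ge \U, \Phi)_\mG = (\U, \Ge^{\,*} \Phi)_\mH$, and using $\Ge^{\,*}\Phi = G^*\Phi$, the two sides cancel and I am left with $\langle \normTraceB(\U), \traceB(\varphi) \rangle_{\Gamma_b} = 0$ for every $\U \in \mD(\Ge)$. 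To conclude $\traceB(\varphi) = 0$ it then suffices to verify that $\normTraceB(\mD(\Ge))$ is dense in (in fact equal to) $H^{-1/2}(\Gamma_b)$. This is the one point requiring some care: given any $\mu \in H^{-1/2}(\Gamma_b)$, one picks a lifting $\V \in \HDiv$ with $\normTraceB(\V) = \mu$ (surjectivity of the normal trace on $\HDiv$ is standard), and then uses the cutoff $1-\chi$ introduced in the previous subsection; because $\chi \equiv 0$ in a neighborhood of $\Gamma_b$ and $\chi \equiv 1$ near $\Gamma_s$, the field $(1-\chi)\V$ lies in $\HDiv$, has $\normTraceS((1-\chi)\V) = 0 \in L^2(\Gamma_s)$ hence belongs to $\mD(\Ge)$, and still satisfies $\normTraceB((1-\chi)\V) = \mu$. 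Duality between $H^{-1/2}(\Gamma_b)$ and $H^{1/2}(\Gamma_b)$ then forces $\traceB(\varphi) = 0$.

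For the converse inclusion I would simply check that any $\Phi$ satisfying $\varphi \in H^1(\Omega)$, $\gamma = \traceS(\varphi)$ and $\traceB(\varphi) = 0$ belongs to $\mD(\Ge^{\,*})$: such $\Phi$ lies in $\mD(G^*)$, and Lemma \ref{GreenFormula} immediately yields $(\Ge \U, \Phi)_\mG = (\U, G^* \Phi)_\mH$ for every $\U \in \mD(\Ge)$ since the boundary term vanishes, so the linear form $\U \mapsto (\Ge \U, \Phi)_\mG$ is continuous on $\mD(\Ge)$ in the $\mH$-norm and is represented by $G^*\Phi \in \mH$. This places $\Phi$ in $\mD(\Ge^{\,*})$ and shows $\Ge^{\,*}\Phi = G^* \Phi$, finishing the proof. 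The main obstacle is the density/surjectivity argument for $\normTraceB$ on $\mD(\Ge)$; everything else is a direct consequence of Lemma \ref{GreenFormula} and the $G \subset \Ge$ inclusion.
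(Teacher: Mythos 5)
Your proof is correct and follows essentially the same route as the paper: deduce $\Ge^{\,*}\subset G^*$ from $G\subset\Ge$, apply Lemma \ref{GreenFormula} to make the boundary pairing vanish, and invoke surjectivity of $\normTraceB$ to conclude $\traceB(\varphi)=0$. You are in fact slightly more complete than the paper, since you justify that the surjectivity of the normal trace can be realized within $\mD(\Ge)$ via the cutoff $1-\chi$ and you also verify the converse inclusion (that every $\Phi$ with $\traceB(\varphi)=0$ indeed lies in $\mD(\Ge^{\,*})$), both of which the paper leaves implicit.
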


\begin{proof} %Lemma \ref{GreenFormula} is the key tool to compute the adjoint of $ \Ge. $ Indeed, 
Since $ \mD(G) \subset \mD(\Ge) $, we have the inclusion $ \mD(\Ge^{\, *}) \subset \mD(G^*) $ and  $ G^* $ is an extension of $ \Ge^{\, *}$. Therefore Lemma \ref{GreenFormula} can be used as follows: for all $\U \in D(\Ge)$ and $ \Phi = (\varphi,\psi,\gamma)^t \in  D(\Ge^{\, *}) $, it holds
\begin{equation}
    (\Ge \U, \Phi)_{\mG} 
     - (\U, \Ge^{\, *} \Phi)_{\mH} 
     = \langle \normTraceB(\U), \, \traceB(\varphi)  
     \rangle_{\Gamma_b},
\end{equation}
which shows that $ \langle \normTraceB(\U), \, \traceB(\varphi)  
     \rangle_{\Gamma_b} = 0. $ Using the surjectivity of the normal trace operator $ \normTraceB $, we deduce that $  \traceB(\varphi)   = 0.$

\end{proof}

The space $\mD(G^*)$ is equipped with the graph norm, 
\[
\| \Phi \|_{\mD(G^*)}^2
= \| \Phi \|_\mG^2 + \| G^*\Phi \|_\mH^2.
\]
And we have the following result,

\begin{prpstn} \label{prop:GraphNorm}
There exists a constant $C_c>0$ such that 
\[
\forall \Phi = 
\begin{pmatrix}
\varphi \\ \psi \\ \gamma
\end{pmatrix}
\in \mD(G^*),  \quad 
\| \Phi \|_{\mD(G^*)}
\geq C_c \| \varphi \|_{H^1(\Omega)}.
\]
\end{prpstn}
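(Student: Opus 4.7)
The plan is to recover an $H^1$-bound on $\varphi$ directly from the explicit formula for $G^*\Phi$ together with the weights appearing in the norm on $\mathcal{G}$.

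First I would control the $L^2$-norm of $\varphi$. Since $\rho_0$ and $c_0$ are bounded uniformly away from $0$ and $\infty$ on $[0,H]$, the weight $\rho_0/c_0^2$ appearing in $(\cdot,\cdot)_\mathcal{G}$ is pinched between two positive constants, so
\[
\|\varphi\|_{L^2(\Omega)}^2 \leq \frac{c_+^2}{\rho_-}\,\|\Phi\|_\mathcal{G}^2.
\]
A similar observation gives $\|\psi\|_{L^2(\Omega)}^2 \leq \rho_-^{-1}\|\Phi\|_\mathcal{G}^2$.

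Next I would extract the gradient. Using the formula $G^*\Phi = -\nabla\varphi + N\bigl(\psi + \tfrac{N}{g}\varphi\bigr)\mathbf{e}_z$ from \eqref{def_G_ast}, I rearrange to
\[
\nabla\varphi = -G^*\Phi + N\!\left(\psi + \tfrac{N}{g}\varphi\right)\mathbf{e}_z,
\]
so that pointwise $|\nabla\varphi|^2 \leq 2\,|G^*\Phi|^2 + 4 N^2 \psi^2 + 4\,N^4 g^{-2} \varphi^2$. Under the standing hypotheses $\rho_0 \in C^1([0,H])$ and $c_0 \in C^0([0,H])$ with positive bounds, the definition \eqref{def_of_N} shows that $N \in L^\infty(0,H)$, hence there is a uniform constant $N_+$ bounding $N$. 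Integrating, using $\|G^*\Phi\|_\mathcal{H}^2 \geq \rho_-\|G^*\Phi\|_{L^2(\Omega)^d}^2$, and plugging in the $L^2$-estimates on $\varphi$ and $\psi$ from the previous step, I obtain
\[
\|\nabla\varphi\|_{L^2(\Omega)^d}^2 \leq C_1 \|G^*\Phi\|_\mathcal{H}^2 + C_2 \|\Phi\|_\mathcal{G}^2,
\]
for constants $C_1,C_2$ depending only on $\rho_\pm$, $c_\pm$, $g$, and $N_+$.

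Combining the two bounds gives $\|\varphi\|_{H^1(\Omega)}^2 \leq C\bigl(\|\Phi\|_\mathcal{G}^2 + \|G^*\Phi\|_\mathcal{H}^2\bigr) = C\|\Phi\|_{\mathcal{D}(G^*)}^2$, from which the claim follows with $C_c = 1/\sqrt{C}$. There is no real obstacle here: the only point requiring a small check is the $L^\infty$-bound on $N$, which follows directly from the regularity and non-degeneracy hypotheses on $\rho_0$ and $c_0$. The traces $\gamma$ play no role in the estimate, as expected since $\|\varphi\|_{H^1(\Omega)}$ is a bulk quantity.
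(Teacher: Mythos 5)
Your proposal is correct and follows essentially the same route as the paper's proof: control $\|\varphi\|_{L^2}$ (and $\|\psi\|_{L^2}$) via the uniformly positive weights in the $\mG$-norm, then isolate $\nabla\varphi$ from the formula for $G^*\Phi$ by the triangle inequality and absorb the zeroth-order term $N(\psi+\tfrac{N}{g}\varphi)\e{z}$ using the boundedness of $N$. The only difference is that you make explicit the $L^\infty$-bound on $N$, which the paper leaves implicit.
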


\begin{proof}
In the proof, we use the symbol $\lesssim$ for inequalities that hold up to a constant independent of $\Phi$. For $\Phi \in \mD(G^*)$, we have
  \[
    \| \varphi \|^2_{L^2(\Omega)^3}
    \lesssim
    \intdom \frac{\rho_0}{c_0^2} \varphi \xdif x 
    \leq \| \Phi \|^2_\mG
    \quad  \text{ and } \quad 
    \| \nabla \varphi \|^2_{L^2(\Omega)^d}
     \lesssim
    \| \nabla \varphi \|^2_\mH.
  \]
  It holds, by the triangular inequality,
\begin{equation*}
  \| \nabla \varphi \|^2_\mH
    \lesssim
\left \| - \nabla \varphi 
    + N (\psi + \frac{N}{g} \varphi) \e{z}
\right \|_\mH^2 
+ \left \| N (\psi + \frac{N}{g} \varphi) \e{z}
\right \|^2_\mH  ,
\end{equation*}
hence the norm of the gradient is bounded by $  \| G^* \Phi \|_\mH^2
+  \| \Phi \|_\mG^2 $, which concludes the proof.
\end{proof}

In this section, we have introduced all the necessary operators for the study of the potential-based problem \eqref{eq:Intro:P_Vol}-\eqref{eq:Intro:P_CI} and the velocity-field problem \eqref{eq:Intro:V_Vol}-\eqref{eq:Intro:V_CI}.

%%%%%%%%%%%%%%%%%%%%%%%%%%%%%%%%%%%%%%%%%%%%%%%%%%%%%%%%%%%%%
%%%%%%%%%%%%%%%%%%%%%%%%%%%%%%%%%%%%%%%%%%%%%%%%%%%%%%%%%%%%%

%%%%%%%%%%%%%%%%%      SECTION    %%%%%%%%%%%%%%%%%%%%%%%%%%%

%%%%%%%%%%%%%%%%%%%%%%%%%%%%%%%%%%%%%%%%%%%%%%%%%%%%%%%%%%%%%
%%%%%%%%%%%%%%%%%%%%%%%%%%%%%%%%%%%%%%%%%%%%%%%%%%%%%%%%%%%%%

\section{Analysis of the potential-based formulation} \label{sec:Potential}

 This section is dedicated to the study of the problem \eqref{eq:Intro:P_Vol}-\eqref{eq:Intro:P_CI}. We show that the problem is well posed and that its solution satisfies an energy equality. We recall the system of PDE satisfied by the potentials $\varphi$ and $\psi$, 
\begin{align}
\frac{\partial^2 \varphi}{\partial t^2} 
+ c_0^2 \nabla \cdot  \left(
 - \nabla \varphi
+ N \left(\psi + \frac{N}{g} \varphi \right)  \e{z} \right)
+ g \frac{\partial \varphi}{\partial z}
- g N \left(\psi + \frac{N}{g} \varphi \right)
= 0, 
\ \text{ in } \Omega \times [0,T],\label{eq:P:PDE_Vol}
\\
\frac{\partial^2 \psi}{\partial t^2} 
- N \frac{\partial \varphi}{\partial z}
+ N^2 \left(\psi + \frac{N}{g} \varphi \right) 
 = 0,  \ \text{ in } \Omega \times [0,T],\label{eq:P:PDE_Vol2}
\end{align}
with boundary conditions
\begin{equation} \label{eq:P:PDE_Bot}
- \nabla \varphi \cdot \normal_b
+ N \left(\psi + \frac{N}{g} \varphi \right) (  \e{z} \cdot \normal_b) = u_b, \ \text{ on } \Gamma_b \times [0,T],
\end{equation}
and  
\begin{equation} \label{eq:P:PDE_Surf}
\frac{\partial^2 \varphi}{\partial t^2} 
- g  \left(
- \nabla \varphi 
+ N \left(\psi + \frac{N}{g} \varphi \right) \e{z}
\right) \cdot \normal_s = 0,
\ \text{ on } \Gamma_s  \times [0,T].
\end{equation} 
The system \eqref{eq:P:PDE_Vol}-\eqref{eq:P:PDE_Surf} is completed with vanishing initial conditions.

%%%%%%%%%%%%%%%%%%%%%%%%%%%%%%%% SUBSECTION %%%%%%%%%%%%%%%%%%%%%%%%%%%

\subsection{Variational formulation}
The natural idea for writing the variational formulation associated to 
\eqref{eq:P:PDE_Vol}-\eqref{eq:P:PDE_Vol2} 
consists in testing  
\eqref{eq:P:PDE_Vol}-\eqref{eq:P:PDE_Vol2} against a function $(\tilde \varphi, \tilde \psi) \in H^1(\Omega) \times L^2(\Omega)$.
After integrating by parts, using the boundary conditions \eqref{eq:P:PDE_Bot}-\eqref{eq:P:PDE_Surf} and the definition of $N$ given by \eqref{def_of_N}, we obtain the problem:
given $u_b$ regular enough, 
find %$(\varphi, \ \psi)^t$ such that
\begin{equation}
\begin{pmatrix}
\varphi \\ \psi
\end{pmatrix} \in L^2(0,T ; H^1(\Omega) \times  L^2(\Omega)) , 
\quad 
\frac{d}{dt}\begin{pmatrix}
\varphi \\ \psi
\end{pmatrix} \in L^2(0,T; L^2(\Omega)^2)
\label{eq:P:Non_standard_regularity},
\end{equation}
solution to
\begin{multline}
\frac{d^2}{dt^2} 
\intdom \frac{\rho_0}{c_0^2} 
\varphi \, \tilde \varphi \xdif x
+ \frac{d^2}{dt^2} 
\intdom \rho_0 \psi \, \tilde \psi \xdif x
\\
+ \intdom \rho_0
\left(
- \nabla \varphi + N \left( \psi + \frac{N}{g} \varphi \right)  \e{z}
\right)
\left(
- \nabla \tilde \varphi + N \left( \tilde \psi + \frac{N}{g} \tilde \varphi \right)  \e{z}
\right)
\xdif x
\\
+ \frac{d^2}{dt^2} \ints \frac{\rho_0}{g} 
\varphi \, \tilde \varphi \xdif s
+ \intb \rho_0 u_b \traceB(\tilde \varphi) \xdif s = 0,
\quad \forall
\begin{pmatrix}
\tilde \varphi \\ \tilde \psi
\end{pmatrix}
\in H^1(\Omega) \times L^2(\Omega).
\label{eq:P:Non_standard_weak}
\end{multline}
The formulation \eqref{eq:P:Non_standard_weak} will be useful for the numerical approximation. 
Indeed, the natural spaces for the  discretization are classical: $H^1(\Omega)$ for $\varphi$ and $L^2(\Omega)$ for $\psi$. Moreover, for $d=3$, the velocity formulation has three scalar unknowns whereas the potential formulation requires only two scalars unknowns. Finally, the source $u_b$ appears naturally as a Neumann condition in this formulation.

%%%%%%%%%%%%%%%%%%%%%%%%%%%%%%%% SUBSECTION %%%%%%%%%%%%%%%%%%%%%%%%%%%

\subsection{Existence and uniqueness results}\label{sec:existence_anal}

The existence of a solution to \eqref{eq:P:PDE_Vol}-\eqref{eq:P:PDE_Vol2} cannot be directly proved by standard methods (such as \cite{lions_non-homogeneous_1972}), because of the surface condition \eqref{eq:P:PDE_Surf} involving the second-order time derivative of $ \varphi$. 
Therefore, we need to introduce a new unknown to the problem, denoted $\gamma \in L^2(\Gamma_s)$, and we define the vector of unknowns
\begin{equation}
\Phi(t) = 
\begin{pmatrix}
\varphi(t) \\ \psi(t) \\ \gamma(t)
\end{pmatrix}
\in \mG.
\end{equation} 
From the variational formulation \eqref{eq:P:Non_standard_weak} we deduce that the problem  \eqref{eq:P:PDE_Vol}
-  \eqref{eq:P:PDE_Surf}
reduces to the following abstract formulation: 
assume $u_b \in H^1(0,T;H^{-1/2}(\Gamma_b))$ given, find 
\[
\Phi \in L^2(0,T;\mD(G^*)), \quad 
\frac{d }{dt} \Phi \in L^2(0,T;\mG),
\]
solution to
\begin{align}
\frac{d^2}{dt^2}  ( \Phi(t), \tilde \Phi)_\mG
+ (G^* \Phi(t), G^* \tilde \Phi)_\mH
= \ell_b(t,\tilde \Phi),
& \quad \forall \, \tilde \Phi \in \mD(G^*),
\quad \text{ in }  \mD'(0,T),
\label{eq:P:Standard_weak}
\\
\Phi(0) = \frac{d}{dt}\Phi(0) = 0,
\label{eq:P:Standard_CI}
\end{align}
where $\ell_b:(0,T) \times \mD(G^*) \to \R$ is the linear form
\begin{equation}
\ell_b(t,\Phi) = \langle u_b(t), \traceB(\varphi) \rangle_{\Gamma_b}.
\end{equation}
Note that if $\Phi \in \mD(G^*)$, then $\gamma$ is the surface trace of $\varphi$, and the equation  \eqref{eq:P:Standard_weak} is exactly the equation \eqref{eq:P:Non_standard_weak}. 
For the formulation \eqref{eq:P:Standard_weak}-\eqref{eq:P:Standard_CI} we have the following result, 
\begin{prpstn} \label{prop:P:Existence}
Assume that $u_b \in H^{1}(0,T; H^{-1/2}(\Gamma_b))$.
Then the problem \eqref{eq:P:Standard_weak}-\eqref{eq:P:Standard_CI} has a unique solution and, up to a modification on zero measure sets, 
\[
 \Phi \in C^0 \big([0,T]; D(G^*) \big) \cap C^1\big([0,T];\mG \big). 
 %, \quad    \frac{d^2}{dt^2} \Phi \in L^2(0,T;\mD(G^*)').
\]
\end{prpstn}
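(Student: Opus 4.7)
The plan is to apply the standard Galerkin plus energy-estimate machinery for second-order hyperbolic problems in the spirit of Lions--Magenes, handling the non-homogeneous boundary datum through an integration by parts in time. Set $V := \mD(G^*)$ equipped with the graph norm, so that $V \hookrightarrow \mG$ continuously and densely. The bilinear form $a(\Phi, \widetilde\Phi) := (G^*\Phi, G^*\widetilde\Phi)_\mH$ is continuous and symmetric on $V \times V$, and the shift $a(\Phi,\Phi) + \|\Phi\|_\mG^2 = \|\Phi\|_V^2$ is coercive on $V$, which places us in the classical framework where a Galerkin construction applies.

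\textbf{Galerkin approximation and energy identity.} I would fix a total family of $V$, define the Galerkin spaces $V_n$, and solve the resulting finite-dimensional second-order linear ODE with vanishing initial data; the mass matrix is invertible and the right-hand side is of class $H^1$ in time, so the approximate solutions $\Phi_n$ exist uniquely. Testing the discrete equation against $\Phi_n'$ yields the energy identity
\begin{equation*}
\frac{1}{2}\frac{d}{dt}\Big(\|\Phi_n'(t)\|_\mG^2 + \|G^*\Phi_n(t)\|_\mH^2\Big) = \langle u_b(t),\traceB(\varphi_n'(t))\rangle_{\Gamma_b}.
\end{equation*}
The right-hand side cannot be controlled directly, because the left-hand energy only controls $\Phi_n'$ in $\mG$-norm and therefore gives no $H^{1/2}(\Gamma_b)$ information on $\traceB(\varphi_n')$. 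The decisive step is to integrate in time and then integrate by parts in time on the right,
\begin{equation*}
\int_0^t \langle u_b,\traceB(\varphi_n')\rangle_{\Gamma_b}\, ds = \langle u_b(t),\traceB(\varphi_n(t))\rangle_{\Gamma_b} - \int_0^t \langle u_b'(s),\traceB(\varphi_n(s))\rangle_{\Gamma_b}\, ds,
\end{equation*}
which is precisely what the hypothesis $u_b \in H^1(0,T; H^{-1/2}(\Gamma_b))$ makes rigorous. Proposition \ref{prop:GraphNorm} combined with the continuity of the trace $\traceB : H^1(\Omega) \to H^{1/2}(\Gamma_b)$ gives $\|\traceB(\varphi_n)\|_{H^{1/2}(\Gamma_b)} \lesssim \|G^*\Phi_n\|_\mH + \|\Phi_n\|_\mG$. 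Young's inequality then absorbs the resulting $\|G^*\Phi_n(t)\|_\mH^2$ into the left-hand side, and Gronwall's lemma (using $\|\Phi_n(t)\|_\mG \leq \int_0^t \|\Phi_n'\|_\mG\, ds$ from the vanishing initial data) yields uniform bounds on $\Phi_n$ in $L^\infty(0,T; V)$ and on $\Phi_n'$ in $L^\infty(0,T; \mG)$.

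\textbf{Passage to the limit, uniqueness, regularity.} Weak-$*$ compactness and passage to the limit in the Galerkin equation tested against an arbitrary $\widetilde\Phi \in V$ furnish a function $\Phi$ solving \eqref{eq:P:Standard_weak}--\eqref{eq:P:Standard_CI} in the required classes. Uniqueness follows by applying the same energy identity to the difference of two solutions, for which the source vanishes identically. The upgrade from the $L^\infty$ bounds to the announced continuity $\Phi \in C^0([0,T]; \mD(G^*)) \cap C^1([0,T]; \mG)$ is obtained by the classical Lions--Magenes argument: the weak limit is a priori only weakly continuous in the energy spaces, but the time-continuity of the energy (derived from the variational equation viewed as an identity in $\mD'(0,T)$) together with weak continuity forces strong continuity after modification on a null set. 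I expect the integration-by-parts-in-time step to be the principal obstacle: it is what makes the hypothesis $u_b \in H^1(0,T; H^{-1/2}(\Gamma_b))$ exactly the sharp regularity needed, and it is the only nonstandard ingredient — everything else is a direct application of the classical abstract hyperbolic theory.
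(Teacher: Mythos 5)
Your proposal is correct and follows essentially the same route as the paper: the one nonstandard ingredient in both is the control of the boundary functional $\ell_b(t,\cdot)$ on $\mD(G^*)$ via Proposition \ref{prop:GraphNorm} and the continuity of $\traceB$, combined with the $H^1$-in-time regularity of $u_b$ (which the paper exploits through the same integration by parts in time when deriving the energy identity \eqref{eq:P:Energy}). The only difference is one of presentation: the paper verifies these hypotheses and then invokes the standard abstract results of Lions--Magenes and Joly, whereas you unfold the Galerkin construction and limit passage that those references contain.
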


\begin{proof}
% See Book P. Joly, Theorem 2.1
First we show that, for almost all $t \in (0,T)$, the form
$\ell_b(t)$ is a bounded linear functional on $\mD(G^*)$. 
Let $\Phi = (\varphi, \psi, \traceS(\varphi))^t \in \mD(G^*)$.
From the continuity of the trace operator, there exists a scalar $C_H > 0$ depending only on $H$, such that 
\[
|\langle \ell_b(t), \Phi \rangle| 
= |\langle  u_b(t), \traceB(\varphi)
\rangle_{ \Gamma_b) }| \leq C_H \|u_b(t) \|_{H^{-1/2}(\Gamma_b)}
\| \varphi \|_{H^1(\Omega)}, 
\]
and from Proposition \ref{prop:GraphNorm} we obtain
\[
|\langle \ell_b(t), \Phi \rangle|  \leq C_H C_c^{-1} \|u_b(t) \|_{H^{-1/2}(\Gamma_b)}   \| \Phi \|_{\mD(G^*)},
\]
hence $\ell_b(t)$ is bounded. From the continuity in time of $u_b$, we have
$\ell_b \in H^1(0,T; \mD(G^*)')$. 
From this property and \eqref{eq:P:Standard_weak}, we deduce that 
\[
    \frac{d^2 }{dt^2} \Phi \in L^2(0,T; \mD(G^*)'),
\]
hence the initial conditions \eqref{eq:P:Standard_CI} make sense. The existence and uniqueness of a solution  to the problem \eqref{eq:P:Standard_weak}-\eqref{eq:P:Standard_CI} follows then directly from standard results, see e.g.  \cite{lions_non-homogeneous_1972} and \cite{ joly_effective_2008}. 
\end{proof}

%%%%%%%%%%%%%%%%%%%%%%%%%%%%%%%% SUBSECTION %%%%%%%%%%%%%%%%%%%%%%%%%%%

\subsection{Energy identity}
An energy identity for the system \eqref{eq:P:Standard_weak}-\eqref{eq:P:Standard_CI} is obtained following the usual approach \cite{lions_non-homogeneous_1972}.
We define the energy
\begin{equation}\label{def_energy}
\mathcal E(t) = \frac{1}{2} \big( \| \partial_t \Phi \|^2_\mG + \| G^* \Phi \|_\mH^2 \big). 
\end{equation}
Taking formally $\tilde \Phi = \partial_t \Phi$ in the weak formulation \eqref{eq:P:Standard_weak} yields
\[
  (\partial^2_{tt} \Phi, \partial_t \Phi)_\mG
+ (G^* \Phi, G^* \partial_t \Phi)_\mH
= \langle u_b , \traceB (\partial_t \varphi ) \rangle_{\Gamma_b},
\]
which is equivalent to 
\begin{equation}
\frac{d \mathcal E}{dt} 
= 
%\langle u_b , \traceB (\partial_t \varphi ) \rangle_{\Gamma_b}.
\frac{d}{dt} \langle u_b , \traceB (\varphi) \rangle_{\Gamma_b}
-  \langle \partial_t u_b, \traceB (\varphi) \rangle_{\Gamma_b}.
\end{equation}
Integrating the above equation from 0 to $t$ and using the vanishing initial conditions yields
\begin{equation} \label{eq:P:Energy}
  \mathcal E(t) 
= %\int_0^t \langle u_b(t') , \traceB (\partial_t \varphi(t') ) \rangle_{\Gamma_b}, \xdif t
\langle u_b(t) , \traceB (\varphi(t) ) \rangle_{\Gamma_b}
-  \int_0^t \langle \partial_t u_b(s), \traceB (\varphi(s)) \rangle_{\Gamma_b} \xdif s.
\end{equation}
One can show that the identity  \eqref{eq:P:Energy} holds for the solutions given by Proposition \ref{prop:P:Existence} (see \cite{joly_effective_2008}). 
The inequality \eqref{eq:P:Energy} is the starting point to derive an estimate of the solution. 
We give below such estimate. % as well as its proof.

\begin{prpstn} \label{prop:Energy} There exists $C>0$ such that, for any $u_b \in H^{1}(0,T; H^{-1/2}(\Gamma_b))$, the solution $\Phi$ to \eqref{eq:P:Standard_weak}-\eqref{eq:P:Standard_CI} satisfies,
\begin{equation}\label{eq:th:estimate}
    \sup_{s \in [0,t]} \mathcal E(s) 
    \leq  C \, (t^2 + 1) \, B^2(t), 
\end{equation}
Where $B$ is given by
\begin{equation}\label{eq:def_B}
B(t) = 
\sup_{s \in [0,t]}  \| u_b (s) \|_{H^{-1/2}(\Gamma_b)}
+  \int_0^t \| \partial_t u_b(s) \|_{H^{-1/2}(\Gamma_b)} \xdif s.
\end{equation}
\end{prpstn}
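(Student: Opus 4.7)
The plan is to start directly from the energy identity \eqref{eq:P:Energy}, bound its right-hand side by a duality pairing involving $\|\varphi(s)\|_{H^1(\Omega)}$, and then bootstrap using Proposition \ref{prop:GraphNorm} to absorb the $H^1$-norm into $\mathcal E$ itself, gaining the factor $t^2+1$ from the vanishing initial condition.

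More precisely, first I would use the duality between $H^{-1/2}(\Gamma_b)$ and $H^{1/2}(\Gamma_b)$ together with the continuity of the trace $\traceB : H^1(\Omega) \to H^{1/2}(\Gamma_b)$ to write, for some constant $C_H > 0$,
\begin{equation*}
    \mathcal E(s) \leq C_H \, \|u_b(s)\|_{H^{-1/2}(\Gamma_b)} \|\varphi(s)\|_{H^1(\Omega)}
    + C_H \int_0^s \|\partial_t u_b(\tau)\|_{H^{-1/2}(\Gamma_b)} \|\varphi(\tau)\|_{H^1(\Omega)} \, \xdif \tau.
\end{equation*}
Setting $M(t) = \sup_{s \in [0,t]} \|\varphi(s)\|_{H^1(\Omega)}$ and recognizing $B(t)$ on the right-hand side, this yields $\sup_{s \in [0,t]} \mathcal E(s) \leq C_H M(t) B(t)$.

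Next I would bound $M(t)$ in terms of $\mathcal E$ using Proposition \ref{prop:GraphNorm}. The bound $\|G^*\Phi(s)\|_\mH \leq \sqrt{2\mathcal E(s)}$ is immediate from the definition \eqref{def_energy}. To control $\|\Phi(s)\|_\mG$, I would use the vanishing initial condition: since $\Phi(s) = \int_0^s \partial_t \Phi(\tau)\, \xdif \tau$, I get
\begin{equation*}
    \|\Phi(s)\|_\mG \leq \int_0^s \|\partial_t \Phi(\tau)\|_\mG \, \xdif\tau \leq s \sqrt{2 \sup_{\tau \in [0,s]} \mathcal E(\tau)}.
\end{equation*}
Combining these and applying Proposition \ref{prop:GraphNorm} gives $\|\varphi(s)\|_{H^1(\Omega)}^2 \lesssim (1+s^2) \sup_{\tau \in [0,s]} \mathcal E(\tau)$, hence $M(t)^2 \lesssim (1+t^2) \sup_{s \in [0,t]} \mathcal E(s)$.

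Finally, substituting this last bound into the estimate $\sup_{[0,t]} \mathcal E \leq C_H M(t) B(t)$ produces the quadratic inequality
\begin{equation*}
    \sup_{s \in [0,t]} \mathcal E(s) \lesssim \sqrt{1+t^2} \; B(t) \; \Bigl(\sup_{s\in[0,t]} \mathcal E(s)\Bigr)^{1/2},
\end{equation*}
from which dividing by the square root of the supremum (treating the degenerate case separately) and squaring yields the desired estimate \eqref{eq:th:estimate}. The only real obstacle is the second step: the energy controls $\|G^*\Phi\|_\mH$ but not directly $\|\Phi\|_\mG$, and this gap is precisely what forces the time-dependent factor $t^2+1$, recovered via Poincaré-type integration from the zero initial data.
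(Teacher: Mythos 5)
Your proposal is correct and follows essentially the same route as the paper: energy identity, duality bound giving the factor $B(t)$, control of $\|\traceB(\varphi)\|_{H^{1/2}(\Gamma_b)}$ via trace continuity and Proposition \ref{prop:GraphNorm}, the bound $\|\Phi(s)\|_\mG \leq \int_0^s \|\partial_t\Phi\|_\mG$ from the vanishing initial data to produce the factor $t^2+1$, and finally the resolution of the resulting quadratic inequality (the paper uses Young's inequality where you divide by the square root, an immaterial difference).
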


\begin{proof}
Starting with the equation \eqref{eq:P:Energy}, we have for the right-hand side
\begin{equation}\label{eq:energy_bound_proof}
\mathcal E(t)  = \langle u_b(t) , \traceB (\varphi(t) ) \rangle_{\Gamma_b}
-  \int_0^t \langle \partial_t u_b(s), \traceB (\varphi(s)) \rangle_{\Gamma_b} \xdif s
\leq 
 B(t) \, \sup_{s\in [0,t]} \| \traceB(\varphi)(s) \|_{H^{1/2}(\Gamma_b)}.
\end{equation}
To estimate the norm on $H^{1/2}(\Gamma_b)$, we use the continuity of the trace and Proposition \ref{prop:GraphNorm}, 
\begin{equation} \label{eq:proof:estimate_Trace}
\sup_{s \in [0,t]}\| \traceB(\varphi)(s) \|_{H^{1/2}(\Gamma_b)}
\lesssim
\sup_{s \in [0,t]}  \| \Phi(s) \|_{\mD(G^*)},
\end{equation}
where we use the symbol $\lesssim$ for inequalities that hold up to a constant independent of $u_b$, $\Phi$ and $t$.  
We show now that the graph norm in the right-hand side of \eqref{eq:proof:estimate_Trace} can be bounded by the energy.
From the definition of the scalar product in $\mD(G^*)$ and the energy \eqref{def_energy}, we have
\begin{equation}\label{eq:proof:estimate_PHi}
\| \Phi(t) \|_{\mD(G^*)}^2
=  \| \Phi(t)\|_\mG^2 
+ \| G^* \Phi(t) \|_\mH^2 
%= \| \Phi\|_\mG^2 
%+ 2 \mathcal E 
%- \| \partial_t \Phi\|_\mG^2
\leq \| \Phi(t) \|_\mG^2 
+ 2 \mathcal E(t) .
\end{equation}
Since the initial conditions vanish, it holds 
\begin{equation*}
\Phi(t) = \int_0^t \partial_t \Phi(r) \xdif r
\quad \Rightarrow \quad
\| \Phi(t) \|_\mG \leq \int_0^t \| \partial_t \Phi(r) \|_\mG \xdif r
\leq \int_0^t \sqrt{2 \mathcal E(r)} \xdif r.
\end{equation*}
Using this inequality to simplify \eqref{eq:proof:estimate_PHi}  yields
\[
\| \Phi(t) \|_{\mD(G^*)}^2
\leq 
\left( \int_0^t \sqrt{2 \mathcal E(r)} \, \text{d}r \right)^2
+ 2 \mathcal E(t)
\leq \left( t \sup_{r \in [0,t]} \sqrt{2 \mathcal E(r)} \right)^2
+ 2 \mathcal E(t).
\]
We finally obtain the bound
\[
\sup_{r \in [0,t]} \| \Phi(r) \|_{\mD(G^*)}^2
\leq 2(t^2 + 1) \sup_{r \in [0,t]}  \mathcal E(r), 
\]
hence \eqref{eq:proof:estimate_Trace}, \eqref{eq:energy_bound_proof} and the inequality just above give,
\begin{equation}
\mathcal E(t)
\lesssim
\sqrt{(t^2 + 1)} B
\sup_{r \in [0,t]} \sqrt{ \mathcal E(r) },
\end{equation}
which allows to deduce \eqref{eq:th:estimate} using the Young inequality.
\end{proof}

The well-posedness of the potential-based formulation \eqref{eq:P:Standard_weak} was obtained by standard tools \cite{lions_non-homogeneous_1972}. The study of the velocity-field formulation is more involved, and is the subject of the next section.

%%%%%%%%%%%%%%%%%%%%%%%%%%%%%%%%%%%%%%%%%%%%%%%%%%%%%%%%%%%%%
%%%%%%%%%%%%%%%%%%%%%%%%%%%%%%%%%%%%%%%%%%%%%%%%%%%%%%%%%%%%%

%%%%%%%%%%%%%%%%%      SECTION    %%%%%%%%%%%%%%%%%%%%%%%%%%%

%%%%%%%%%%%%%%%%%%%%%%%%%%%%%%%%%%%%%%%%%%%%%%%%%%%%%%%%%%%%%
%%%%%%%%%%%%%%%%%%%%%%%%%%%%%%%%%%%%%%%%%%%%%%%%%%%%%%%%%%%%%

\section{Analysis of the velocity-field formulation} \label{sec:Velocity}

This section is devoted to prove an existence and uniqueness result for 
the system of partial differential equations for the velocity-field formulation. We recall the problem at hand: for $u_b$ given, find $\U$   solution to 
\begin{equation} \label{eq:V:Vol}
    \rho_0 \frac{\partial^2 \U}{\partial t^2} 
- \nabla \left(
	\rho_0 c_0^2\nabla \cdot \U
	- \rho_0 g \U \cdot  \e{z} \right)
- \nabla \cdot(\rho_0 g \U) \,  \e{z}
= 0, \quad \text{ in } \Omega \times [0,T],
 \end{equation}
with the boundary conditions
\begin{equation} \label{eq:V:BC}
\U \cdot \normal_b = \source
\quad \text{ on } \Gamma_b \times [0,T],
\qquad \nabla \cdot \U = 0  
\quad \text{ on } \Gamma_s \times [0,T],
\end{equation}
and with vanishing initial conditions.

%%%%%%%%%%%%%%%%%%%%%%%%%%%%%%%% SUBSECTION %%%%%%%%%%%%%%%%%%%%%%%%%%%

\subsection{Variational formulation and uniqueness result} \label{sec:V:Uniqueness}
The variational formulation associated to the evolution problem \eqref{eq:V:Vol}-\eqref{eq:V:BC} is obtained by  testing the system \eqref{eq:V:Vol} against a function $\V$ and integrating over $\Omega$. The test function $\V$ is chosen such that its normal trace on $\Gamma_b$ vanishes. Using the boundary conditions \eqref{eq:V:BC} we obtain: 
\begin{multline}
\frac{d^2}{dt^2} \intdom \rho_0 \U(t) \V \xdif x + \intdom \rho_0 c_0^2 \left( \nabla \cdot \U(t) 
	- \frac{g}{c_0^2} \U(t) \cdot  \e{z} 
\right)   \left( \nabla \cdot \V 
	- \frac{g}{c_0^2} \V \cdot  \e{z} 
\right)  \xdif x  \\[8pt]
+  \intdom \rho_0 N^2 \U(t) \cdot  \e{z}  \V \cdot  \e{z}  \xdif x  +  \ints \rho_0 g  \U(t) \cdot \normal_s \,  \V \cdot \normal_s  \xdif s
= 0.
\end{multline}
The formulation above is completed with the non-homogeneous boundary condition  $\U \cdot \normal _b= \source $ on $\Gamma_b$. 
These formal computations show that the adequate variational formulation to study is the following: 
assume ${u_b \in H^2(0,T;H^{-1/2}(\Gamma_b))}$ given, and 
find 
\begin{equation}\label{eq:reg_U}
\U \in L^2(0,T;\mD(\Ge)), \quad 
\frac{d}{dt} \U \in L^2(0,T;\mH),
\end{equation}
solution to 
\begin{align}
\frac{d^2}{dt^2} (\U , \V )_\mH
+ (\Ge \U, G \V)_\mG
= 0, 
& \quad \forall \V \in \mD(G),
\quad \text{ in }  \mD'(0,T),
\label{eq:V:Weak_1}
\\
\normTraceB(\U) = u_b, 
& \quad \text{ in }   (0,T),
\label{eq:V:Weak_BC_1}
\\[4pt]
\U(0) =  \frac{d}{dt} \U(0) = 0.
\label{eq:V:Weak_CI_1}
\end{align}
Note that compared to Section \ref{sec:existence_anal} we have assumed slightly more regularity in time for the source term $u_b$. 
This is another drawback of the velocity-field formulation and is due to the nature of the condition: here the inhomogeneous boundary condition in  \eqref{eq:V:BC} is of essential type (similar to an inhomogeneous Dirichlet boundary condition). Of course one could weaken this regularity assumption, however, from our current analysis this would also weaken the regularity of the solution.  
It is rather direct -- using Lions-Magenes theory \cite{lions_non-homogeneous_1972} -- to prove the following result.

\begin{thrm} The solution to \eqref{eq:V:Weak_1}-\eqref{eq:V:Weak_CI_1} is unique. 
\end{thrm}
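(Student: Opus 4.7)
The plan is the standard linear energy argument, complicated only by the low time-regularity of $\U$. By linearity, it suffices to show that any $\U$ satisfying \eqref{eq:V:Weak_1}--\eqref{eq:V:Weak_CI_1} with $u_b \equiv 0$ must vanish identically. With $u_b = 0$ the essential condition \eqref{eq:V:Weak_BC_1} gives $\normTraceB(\U(t)) = 0$ for a.e. $t$, so $\U(t) \in \mD(G)$ and $\Ge\U(t) = G\U(t)$. One would like to test \eqref{eq:V:Weak_1} against $\V = \partial_t \U$ to recover the conserved quantity $\tfrac12\|\partial_t \U\|_\mH^2 + \tfrac12\|G\U\|_\mG^2$; however, with only $\partial_t \U \in L^2(0,T;\mH)$ from \eqref{eq:reg_U}, this function does not lie pointwise in $\mD(G)$ and is not admissible as it stands.

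To bypass this, I would use the Ladyzhenskaya/Lions--Magenes trick. For an arbitrary but fixed $s \in (0,T]$, set
\[ \V(t) = \begin{cases} -\displaystyle\int_t^s \U(r)\,\xdif r & \text{if } 0 \leq t \leq s,\\[4pt] 0 & \text{if } s < t \leq T. \end{cases} \]
Because the normal trace $\normTraceB$ commutes with integration in time and $\normTraceB(\U(r)) = 0$, one has $\V(t) \in \mD(G)$ for every $t$; moreover $\V \in H^1(0,T;\mH)$ with $\partial_t \V(t) = \U(t)$ on $(0,s)$, and the closedness of $G$ gives $G\V(t) = -\int_t^s G\U(r)\,\xdif r$, so that $\partial_t(G\V)(t) = G\U(t)$ on $(0,s)$.

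Substituting this $\V$ into \eqref{eq:V:Weak_1} and integrating in time from $0$ to $s$, two integrations by parts, together with the vanishing initial data, $\V(s) = 0$, and $\partial_t \V = \U$, will produce
\[ -\tfrac{1}{2}\|\U(s)\|_\mH^2 - \tfrac{1}{2}\|G\V(0)\|_\mG^2 = 0, \]
so that $\U(s) = 0$; since $s \in (0,T]$ is arbitrary, $\U \equiv 0$. The main obstacle is not the algebra but its justification: \eqref{eq:V:Weak_1} only holds as an equality in $\mD'(0,T)$ for each fixed spatial test function in $\mD(G)$, and one must legitimately pair it with the time-dependent, $\mD(G)$-valued function $\V(t)$ above and make rigorous sense of the identity $\int_0^s \tfrac{d^2}{dt^2}(\U,\V)_\mH\,\xdif t = -\tfrac{1}{2}\|\U(s)\|_\mH^2$ under the regularity \eqref{eq:reg_U}. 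This is exactly the density/duality step of the Lions--Magenes framework \cite{lions_non-homogeneous_1972} that the authors invoke.
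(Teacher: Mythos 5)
Your argument is correct and follows essentially the same route as the paper: with $u_b\equiv 0$ the constraint \eqref{eq:V:Weak_BC_1} puts $\U(t)$ in $\mD(G)$, reducing the problem to the homogeneous one, whose uniqueness is then settled by the standard Lions--Magenes theory. The paper simply cites \cite{lions_non-homogeneous_1972} at that point, whereas you additionally unpack the classical Ladyzhenskaya time-integrated test-function computation that underlies the cited result; both treatments delegate the same final density/duality justification to that reference.
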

\begin{proof}
% Lions Magenes, Th. 8.1
The problem  \eqref{eq:V:Weak_1}-\eqref{eq:V:Weak_CI_1} with $u_b \equiv 0$ amounts to 
find $\U $  with the regularity \eqref{eq:reg_U}
%C^1([0,T] ; \mH) \cap 
solution to 
\begin{equation}
\frac{d^2}{dt^2} (\U , \V )_\mH
+ (G \U, G \V)_\mG
= 0, 
\quad \forall \V \in \mD(G),
\quad \text{ in }  \mD'(0,T),
\end{equation}
with vanishing initial data. It follows from \cite{lions_non-homogeneous_1972} that this problem has a unique solution and it is zero.
\end{proof}

Existence and stability results with respect to the data $u_b$ are more difficult to obtain because of the essential inhomogeneous boundary condition. 
The common approach consists in decomposing the solution $\U$ as ${\U = \U_0 + L(u_b)}$, where the function $\U_0 \in \mD(G)$ is solution to a homogeneous problem, and the operator $L$ is a lifting operator.
We aim to define a lifting operator in a way that preserves the symmetry between the potential-based and the velocity-field problems. 
Hence the lifting should be defined as $L(u_b) = -G^* \Phi_b$, where $\Phi_b \in \mD(G_\alpha^*) $ is solution to the elliptic problem
\begin{equation} \label{eq:V:NonCoercive}
\forall \tilde \Phi = 
    \begin{pmatrix}
    \tilde \Phi
    \\
    \tilde {\bf V}
    \end{pmatrix}  \in \mD(G^*)
    \text{ with } \tilde \Phi = 
    \begin{pmatrix}
    \tilde \varphi \\ \tilde \psi \\ \tilde \gamma 
    \end{pmatrix},
    \quad 
     (G^* \Phi_b, G^* \tilde \Phi )_\mH 
    = \langle u_b, \, \traceB(\tilde \varphi)  
    \rangle_{\Gamma_b}. 
\end{equation}
However, in our case, defining such a lifting is not trivial because of the following result.

\begin{thrm} \label{prop:RangeG}
The range of the operators $G$ and $G^*$ are not closed. 
\end{thrm}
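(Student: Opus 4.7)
The plan is to invoke the Banach closed range theorem: since $G$ and $G^*$ are both closed and densely defined, $G$ has closed range in $\mG$ if and only if $G^*$ has closed range in $\mH$. It therefore suffices to exhibit a failure of closedness for $G^*$, which amounts to disproving the existence of a constant $c > 0$ such that
\[
\| G^* \Phi \|_\mH \geq c \, \inf_{\Psi \in \ker G^*} \| \Phi - \Psi \|_\mG, \qquad \forall \, \Phi \in \mD(G^*).
\]
The strategy is to construct an explicit sequence $\Phi_n \in \mD(G^*)$ that violates this estimate, by exploiting the fact that $\Omega$ is unbounded in the horizontal direction.

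I first identify $\ker G^*$. From \eqref{def_G_ast}, $G^* \Phi = 0$ forces $\nabla_{\bf x} \varphi = 0$ and $\partial_z \varphi = N(\psi + N \varphi / g)$. The first condition yields $\varphi = \varphi(z)$; since $\Omega$ is unbounded horizontally and $\varphi \in L^2(\Omega)$, this forces $\varphi \equiv 0$, hence also $\gamma = \traceS(\varphi) = 0$, and the vertical equation reduces to $N \psi = 0$. Therefore $\ker G^* = \{ (0, \psi, 0)^t \in \mG \ | \ \psi = 0 \text{ a.e.\ on } \{ N > 0 \} \}$. Next I would fix a nonzero $\chi \in C_c^\infty(\R^{d-1})$ and set, for $n \in \N$,
\[
\varphi_n({\bf x}, z) = \chi({\bf x}/n), \qquad \psi_n = - \frac{N}{g} \varphi_n, \qquad \gamma_n = \traceS(\varphi_n),
\]
so that $\Phi_n = (\varphi_n, \psi_n, \gamma_n)^t$ lies in $\mD(G^*)$ (the boundedness of $N$ on $[0,H]$, which follows from the regularity assumed on $\rho_0$ and $c_0$, gives $\psi_n \in L^2(\Omega)$). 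The particular choice of $\psi_n$ is designed to cancel the vertical component of $G^* \Phi_n$, so that, since $\varphi_n$ is independent of $z$, a direct computation yields $G^* \Phi_n = - \nabla_{\bf x} \varphi_n$.

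A simple scaling argument then gives
\[
\| G^* \Phi_n \|_\mH^2 \lesssim n^{d-3}, \qquad \| \varphi_n \|_{L^2(\Omega)}^2 + \| \gamma_n \|_{L^2(\Gamma_s)}^2 \gtrsim n^{d-1},
\]
and because $\psi_n$ already vanishes on $\{ N = 0 \}$, subtracting any element of $\ker G^*$ can only reduce the $\psi$-component of $\| \Phi_n - \Psi \|_\mG$, leaving the $\varphi_n$ and $\gamma_n$ contributions to the distance untouched. Hence
\[
\frac{\| G^* \Phi_n \|_\mH^2}{\inf_{\Psi \in \ker G^*} \| \Phi_n - \Psi \|_\mG^2} \lesssim n^{-2} \to 0 \quad \text{as } n \to \infty,
\]
contradicting the bounded-below estimate. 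The range of $G^*$ is therefore not closed, and by Banach's theorem neither is that of $G$. The main obstacle in designing such a sequence is to arrange for the vertical component of $G^* \Phi$ to cancel through the coupling between $\varphi$ and $\psi$; once this is achieved, only the horizontal gradient of $\varphi_n$ survives in $G^* \Phi_n$, and the horizontal unboundedness of $\Omega$ then makes the ratio tend to zero at the rate $n^{-2}$.
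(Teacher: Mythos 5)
Your proof is correct, but it follows a genuinely different route from the paper's. The paper works directly on the primal side: it restricts to $d=2$, notes $\mathrm{Ker}(G)=\{0\}$, and builds divergence-free velocity fields $\U_n=(\partial_z u_n,-\partial_x u_n)$ from stream functions compressed in the \emph{vertical} direction, so that $G\U_n$ only sees the small horizontal derivative while $\|\U_n\|_\mH$ does not degenerate; the failure of $\|G\U\|_\mG\geq C\|\U\|_\mH$ then gives non-closedness of the range of $G$, and that of $G^*$ follows. You instead pass to the adjoint via the closed range theorem, use the reduced-minimum-modulus characterization $\|G^*\Phi\|_\mH\geq c\,\mathrm{dist}_\mG(\Phi,\ker G^*)$, and build potentials spread out in the \emph{horizontal} direction with $\psi_n=-N\varphi_n/g$ chosen to kill the vertical component of $G^*\Phi_n$ in \eqref{def_G_ast}. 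What your approach buys: it works uniformly in $d=2,3$ (the paper's argument is written only for $d=2$), the scaling $n^{d-3}$ versus $n^{d-1}$ is transparent, and it correctly handles the quotient by the kernel rather than relying on a trivial kernel. One point to tighten: your identification $\ker G^*=\{(0,\psi,0)^t \,|\, \psi=0 \text{ on } \{N>0\}\}$ assumes that $\nabla_{\bf x}\varphi=0$ forces $\varphi=\varphi(z)$ globally, which uses connectedness of the horizontal slices $\{{\bf x}\,|\,z_b({\bf x})<z\}$; for $z\leq H_-$ these may be disconnected for exotic topographies. This is harmless here, because any kernel element with nonvanishing $\varphi$-component would necessarily be supported where $|{\bf x}|<R_+$ (the slices are all of $\R^{d-1}$ for $z>H_-$ and are contained in a fixed ball otherwise), so subtracting it perturbs $\|\varphi_n\|_{L^2}^2\gtrsim n^{d-1}$ only by $O(1)$ and the ratio still tends to zero; but a sentence to this effect would close the argument completely.
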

\begin{proof}
We consider only the case $d=2$, and show that there is no scalar $C>0$ such that, for all function $\U(t) \in \mD(G) \cap \text{Ker} (G)^\perp$, it holds
\begin{equation}\label{eq:proof:ineq}
\| G \U \|_\mG^2 
\geq C \| \U \|_\mH^2.
\end{equation}
This property implies that the range of $G$ is not closed, hence the range of $ G^*$ is also not closed. First note that $\text{Ker} (G) = \{ 0 \}$. Indeed, the kernel of $G$ is defined by
\[
\text{Ker} (G) = \{ \U \in \mD(G) \ | \ 
U_z = 0, \; 
\partial_x U_x = 0, \;
\normTraceS(\U) = 0 \},
\]
Hence, for every $\U = (U_x, U_z) \in \text{Ker} (G)$, it holds $U_z=0$ and $U_x$ is constant in the $x$ direction. Since the domain is infinite in the $x$ direction and $U_x \in L^2(\Omega)$, it is equal to zero. Hence $\mD(G^*) \cap \text{Ker} (G)^\perp = \mD(G^*)$. Now, let $ u \in \mD(\Omega)$ be a function with compact support in $ \R \times (H_-,H)$, 
and for each integer $n>1$ let the function $u_n \in \mD(\Omega)$ be defined by
$u_n ({\bf x}) = u(x,n (z-z_0))$
, with 
$z_0 = (H-H_-)/2$.
We have
\[
\| \partial_x u_n \|_{L^2(\Omega)}
= \frac{1}{n} \| \partial_x u \|_{L^2(\Omega)}, 
\quad 
\| \partial_z u_n \|_{L^2(\Omega)}
= \| \partial_z u \|_{L^2(\Omega)}.
\]
Then let $\U_n \in \mD(G)$ be the divergence-free function defined by
\[
\U_n(x,z) = \begin{pmatrix}
\partial_z u_n (x,z) \\ - \partial_x u_n(x,z)
\end{pmatrix}= \begin{pmatrix}
 n (\partial_z u)(x,n (z-z_0))  \\ - (\partial_x u)(x,n (z-z_0))
\end{pmatrix}.
\]
We have 
$G \U_n = (- g \partial_x u_n , N \partial_x u_n , 0)^t$ and 
\begin{equation}
\| G \U_n \|_\mG^2
= \frac{1}{n}
\left\| \sqrt{ \rho_0\frac{g^2 + N^2}{c_0^2}} \, \partial_x u \right\|_{L^2(\Omega)}^2,
\quad 
\| \U_n \|_\mH^2
= \frac{1}{n} 
  \| \sqrt{\rho_0}\partial_x u \|_{L^2(\Omega)}^2
+ \| \sqrt{\rho_0}  \partial_z u  \|_{L^2(\Omega)}^2.
\end{equation}
For $n$ going to infinity we have that $\| G \U_n \|_\mG^2  $  goes to zero and $ \| \U_n \|_\mH^2 $ converges to $  \| \sqrt{\rho_0} \partial_z u  \|_{L^2(\Omega)}^2 $, hence \eqref{eq:proof:ineq} can not hold.

\end{proof}

Since the range of $G$ is not closed, the bilinear form of the problem \eqref{eq:V:NonCoercive} is not coercive and the existence of a solution is not ensured. 
As a result, we cannot define in a straightforward way a lifting operator of the form $L(u_b) = -G^* \Phi_b$. 
Other lifting choices are possible but are not compatible with the potential-based formulation and are also not straightforward to analyze. To circumvent this problem, we introduce in what follows a dissipative version of the problem \eqref{eq:V:Weak_1}-\eqref{eq:V:Weak_CI_1} that will be easier to analyze. Existence and uniqueness results will be obtained using a limit process, namely, by letting the dissipation go to zero.

%%%%%%%%%%%%%%%%%%%%%%%%%%%%%%%% SUBSECTION %%%%%%%%%%%%%%%%%%%%%%%%%%%

\subsection{Existence results} %and stability results} 

\subsubsection{A formulation with artificial dissipation} \label{sec:Velocity:Relaxed}
Instead of studying the existence of a solution for problem \eqref{eq:V:Weak_1}-\eqref{eq:V:Weak_CI_1}, we introduce a modified problem for a new unknown satisfying 
-- assuming that the  solution $\U(t)$ to \eqref{eq:V:Weak_1}-\eqref{eq:V:Weak_CI_1} exists -- 
for $ \alpha > 0 $  
\[
\U_\alpha(t) = e^{-\alpha t} \U(t),   \quad \text{ in }  \mD'(0,T). 
\]
The variational formulation for this new unknown is then: for $ u_b \in H^2(0,T;H^{-1/2}(\Gamma_b) ) $ given, find 
\begin{equation} \label{eq:V:Weak_2_Reg}
\U_\alpha \in L^2(0,T; \mD(\Ge)), \quad \frac{d}{dt}\U_\alpha \in L^2(0,T; \mH),
\end{equation}
solution to
\begin{align}
%\notag
\frac{d^2}{dt^2} (\U_\alpha , \V )_\mH + 2 \alpha \frac{d}{dt} (\U_\alpha , \V )_\mH  
%\hspace{90pt} &
%\\
+ \alpha^2  (\U_\alpha , \V )_\mH 
+ (\Ge \U_\alpha, G \V)_\mG
= 0, 
& \quad \forall \V \in \mD(G),
\quad \text{ in }  \mD'(0,T),
\label{eq:V:Weak_2}
\\[4pt]
\normTraceB(\U_\alpha) = e^{-\alpha t} u_b, 
& \quad \text{ in }   (0,T),
\label{eq:V:Weak_BC_2}
\\
\U_\alpha(0) =  \frac{d}{dt} \U_\alpha(0) = 0.
\label{eq:V:Weak_CI_2}
\end{align}

The lemma below is straightforward to prove. 

\begin{lmm} Let 
$
\U_\alpha  $
be a solution to \eqref{eq:V:Weak_2}-\eqref{eq:V:Weak_CI_2}. Then $ e^{\alpha t} \U_\alpha $  is  the unique solution  to \eqref{eq:V:Weak_1}-\eqref{eq:V:Weak_CI_1}.   
\end{lmm}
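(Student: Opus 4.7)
The plan is a direct substitution: set $\U(t) := e^{\alpha t}\,\U_\alpha(t)$ and verify term by term that the three requirements of problem \eqref{eq:V:Weak_1}--\eqref{eq:V:Weak_CI_1} are satisfied. Uniqueness is already given by the theorem proved immediately above, so only existence/compatibility need to be checked.

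First, I would address regularity: multiplication by the bounded smooth scalar function $e^{\alpha t}$ on $[0,T]$ preserves membership in $L^2(0,T;X)$ for any Hilbert space $X$, so from \eqref{eq:V:Weak_2_Reg} we immediately get $\U \in L^2(0,T;\mD(\Ge))$ and $d\U/dt \in L^2(0,T;\mH)$ (the distributional derivative of a product with a smooth factor behaves classically). Next, fix a test function $\V \in \mD(G)$ and set $f_\alpha(t) = (\U_\alpha(t),\V)_\mH$, $f(t) = (\U(t),\V)_\mH$, so that $f_\alpha = e^{-\alpha t} f$ as distributions in $\mD'(0,T)$. Differentiating in $\mD'(0,T)$ gives
\[
\frac{df_\alpha}{dt} = e^{-\alpha t}\!\left(\frac{df}{dt} - \alpha f\right),\qquad
\frac{d^2 f_\alpha}{dt^2} = e^{-\alpha t}\!\left(\frac{d^2 f}{dt^2} - 2\alpha\frac{df}{dt} + \alpha^2 f\right).
\]

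Substituting these identities into \eqref{eq:V:Weak_2} and noting that $(\Ge\U_\alpha,G\V)_\mG = e^{-\alpha t}(\Ge\U,G\V)_\mG$ by linearity, the coefficients of $df/dt$ cancel ($-2\alpha + 2\alpha = 0$) and the coefficients of $f$ cancel ($\alpha^2 - 2\alpha^2 + \alpha^2 = 0$), leaving
\[
e^{-\alpha t}\!\left[\frac{d^2}{dt^2}(\U,\V)_\mH + (\Ge\U,G\V)_\mG\right] = 0 \quad \text{in } \mD'(0,T).
\]
Since $e^{-\alpha t}$ is a smooth nonvanishing multiplier, this yields \eqref{eq:V:Weak_1}. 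For the essential boundary condition, I would use that $\normTraceB$ is linear and acts only on the spatial variable: $\normTraceB(\U) = e^{\alpha t}\normTraceB(\U_\alpha) = e^{\alpha t}\cdot e^{-\alpha t}u_b = u_b$, giving \eqref{eq:V:Weak_BC_1}. The vanishing initial conditions are similarly immediate, since $\U(0) = \U_\alpha(0) = 0$ and $(d\U/dt)(0) = \alpha\U_\alpha(0) + (d\U_\alpha/dt)(0) = 0$.

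Finally, uniqueness of $\U$ is inherited from the uniqueness theorem already established for \eqref{eq:V:Weak_1}--\eqref{eq:V:Weak_CI_1}. There is no substantive obstacle in this proof — it is a straightforward exponential change of unknown — the only point requiring a touch of care is to carry the distributional time derivatives through rigorously, which is handled by the Leibniz rule for products with the smooth factor $e^{-\alpha t}$.
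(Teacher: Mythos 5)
Your proof is correct; the paper itself omits the argument entirely, stating only that the lemma is ``straightforward to prove,'' and your term-by-term verification of the exponential change of unknown (with the cancellations $-2\alpha+2\alpha=0$ and $\alpha^2-2\alpha^2+\alpha^2=0$, the boundary condition, the initial data, and the appeal to the previously established uniqueness theorem) is exactly the intended computation. Nothing further is needed.
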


As an immediate consequence of the lemma above, we have that problem \eqref{eq:V:Weak_2}-\eqref{eq:V:Weak_CI_2} has a unique solution. The relaxed formulation can be written in a more compact form by introducing the space
\[
     \mGAlpha = \mG \times \mH, \quad (\cdot,\cdot)_{\mGAlpha} = (\cdot,\cdot)_{\mG} + (\cdot,\cdot)_{\mH},
\]
and the operator $G_\alpha: \mD(G_\alpha) \subset \mH  \to \mGAlpha$, defined by $\mD(G_\alpha) = \mD(G)$ and
\[
  \forall \, \U \in \mD(G_\alpha), \; \,
G_\alpha  \U 
= \begin{pmatrix}
G \U
\\[4pt]
\alpha \U
\end{pmatrix}.
\]
The operator 
$\Ge_\alpha: \mD(\Ge_\alpha) \subset \mH  \to \mGAlpha$ 
is defined similarly, and it holds $\mD(\Ge_\alpha) = \mD(\Ge)$. The variational formulation \eqref{eq:V:Weak_2}-\eqref{eq:V:Weak_CI_2} is then equivalent to: 
find $\U_\alpha$ with the regularity \eqref{eq:V:Weak_2_Reg} and solution to
\begin{align}
\frac{d^2}{dt^2} (\U_\alpha , \V )_\mH + 2 \alpha \frac{d}{dt} (\U_\alpha , \V )_\mH   
+ (\Ge_\alpha \U_\alpha, G_\alpha \V)_\mGAlpha
= 0, 
& \quad \forall \V \in \mD(G_\alpha),
\quad \text{ in }  \mD'(0,T),
\label{eq:V:Weak_3}
\\[4pt]
\normTraceB(\U_\alpha) = e^{-\alpha t} u_b, 
& \quad \text{ in } (0,T),
\label{eq:V:Weak_BC_3}
\\[4pt]
\U_\alpha(0) =  \frac{d}{dt} \U_\alpha(0) = 0.
\label{eq:V:Weak_CI_3}
\end{align}

%%%%%%%%%%%%%%%%%%%%%%%%%%%%%%%% SUBSUBSECTION %%%%%%%%%%%%%%%%%%%%%%%%%%%

\subsubsection{Lifting operator for the dissipative problem}
The operators $ G_\alpha$ and  $ \Ge_\alpha$ are densely defined and closed, so are their adjoints $G_\alpha^\ast$ and  $ \Ge_\alpha^\ast $. We have, in particular, 
\begin{equation}\label{def_G_ast_alpha}
\mD(G_\alpha^*) = \mD(G^\ast) \times \mH, \quad 
\text{ and } \quad
\forall \, (\Phi, {\bf V}) \in  \mD(G_\alpha^*), \quad
G_\alpha^\ast  
\begin{pmatrix}
\Phi \\ {\bf V}
\end{pmatrix}
=  G^\ast \Phi + \alpha {\bf V} \in \mH.
\end{equation}
 Moreover $\mD(\Ge^{\, *}_\alpha) = \mD(\Ge^{\, *})  \times \mH $ and $ G_\alpha^* $  is an extension of  $\Ge^{\, *}_\alpha.$
In the following the identity operators in $\mH$ and in $\mGAlpha$ will be used. 
By abuse of notation, both are denoted $\text{I}$.
 
\begin{thrm}
The range of the operators $G_\alpha$ and $G_\alpha^*$ are closed. 
\end{thrm}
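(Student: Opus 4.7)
The plan is to exploit the fact that the second component of $G_\alpha$ is simply $\alpha\, \U$, which provides a lower bound on $\|G_\alpha \U\|_{\mGAlpha}$ by $\|\U\|_\mH$. Precisely, for every $\U \in \mD(G_\alpha) = \mD(G)$ one has
\[
\| G_\alpha \U \|_{\mGAlpha}^2
= \| G \U \|_\mG^2 + \alpha^2 \|\U\|_\mH^2
\geq \alpha^2 \|\U\|_\mH^2.
\]
This coercivity-type estimate is the whole engine of the proof, and it is the precise feature that was missing for $G$ itself (cf.\ Theorem \ref{prop:RangeG}).

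From here, the closedness of the range of $G_\alpha$ is a standard argument. Take a sequence $(\U_n) \subset \mD(G_\alpha)$ with $G_\alpha \U_n \to \mathbf{W} = (\mathbf{W}_1, \mathbf{W}_2)^t$ in $\mGAlpha$. The inequality above applied to $\U_n - \U_m$ shows that $(\U_n)$ is Cauchy in $\mH$, hence converges to some $\U \in \mH$. At the same time $G\U_n \to \mathbf{W}_1$ in $\mG$; since $G$ is closed, this forces $\U \in \mD(G) = \mD(G_\alpha)$ with $G\U = \mathbf{W}_1$ and $\alpha \U = \mathbf{W}_2$, so $G_\alpha \U = \mathbf{W}$. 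This proves that the range of $G_\alpha$ is closed.

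To get the closedness of the range of $G_\alpha^*$, I would invoke the Banach closed range theorem: for a densely defined closed operator between Hilbert spaces, the range is closed if and only if the range of its adjoint is closed. Since $G_\alpha$ is densely defined and closed (this was already observed in the paragraph preceding the statement), and since we have just shown that $\mathrm{Range}(G_\alpha)$ is closed, the conclusion for $G_\alpha^*$ follows immediately. There is no serious obstacle here; the only subtle point is simply to notice that adding $\alpha \U$ as a new output component is exactly what repairs the lack of closedness identified in Theorem \ref{prop:RangeG}, and then to invoke the right abstract result for the adjoint.
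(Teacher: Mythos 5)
Your proposal is correct and follows essentially the same route as the paper: the key step in both is the coercivity estimate $\| G_\alpha \U \|_{\mGAlpha}^2 \geq \alpha^2 \| \U \|_\mH^2$ on all of $\mD(G_\alpha)$ (the paper also notes $\mathrm{Ker}\, G_\alpha$ is trivial, so this holds on $\mD(G_\alpha)\cap \mathrm{Ker}(G_\alpha)^\perp$), from which closedness of the range and, via the closed range theorem, that of the adjoint's range follow. You merely spell out the Cauchy-sequence argument and the appeal to the closed range theorem that the paper leaves implicit.
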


\begin{proof}
Since $G_\alpha = \begin{pmatrix}
G \\ \alpha \, \text{I}
\end{pmatrix}$, we have $\text{Ker } G_\alpha = \emptyset$. 
Then, for $\U \in \mD(G_\alpha) \cap \text{Ker}(G_\alpha)^\perp = \mD(G_\alpha)$, it holds
\[
\| G_\alpha \U \|_\mGAlpha^2
= \| G \U \|_\mG^2 + \| \alpha \U \|_\mH^2 \geq \alpha^2 \| \U \|_\mH^2,
\]
which concludes the proof. 
\end{proof}

This result is key for constructing a lifting operator. We recall that the kernel of the operator $ G^*_\alpha $ is a closed subspace of
$\mGAlpha$ and we introduce $Q_\alpha \in \mathcal{L}(\mGAlpha)$, the orthogonal projection on
$\mbox{Ker } (G^*_\alpha)$. 
Then $Q_\alpha^2 = Q_\alpha $ and $Q_\alpha$ is self-adjoint.

\begin{thrm}\label{Poincare_abstract}
For $\alpha \leq 1$ we have
\[
\forall \, \PhiAlpha = \begin{pmatrix}
\Phi \\ {\bf V}
\end{pmatrix}\in \mD(G_\alpha^*),
%= \begin{pmatrix} \Phi \\ {\bf V} \end{pmatrix} 
\quad
\| Q_\alpha \PhiAlpha \|_{\mGAlpha}^2 
  + \| G_\alpha^* \PhiAlpha \|_{\mH}^2 
  \geq \alpha^2 \| \PhiAlpha \|_{\mGAlpha}^2 .
\] 
\end{thrm}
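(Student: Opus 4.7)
The plan is to exploit the closed range of $G_\alpha$ (established in the previous theorem) together with the orthogonal decomposition $\PhiAlpha = Q_\alpha \PhiAlpha + (I - Q_\alpha) \PhiAlpha$. The key observation is that $Q_\alpha \PhiAlpha \in \mbox{Ker}(G_\alpha^*)$, so in particular $G_\alpha^* \PhiAlpha = G_\alpha^* (I - Q_\alpha) \PhiAlpha$, and since $\PhiAlpha, Q_\alpha \PhiAlpha \in \mD(G_\alpha^*)$, we also have $(I - Q_\alpha) \PhiAlpha \in \mD(G_\alpha^*)$. Moreover, because $\mbox{Range}(G_\alpha)$ is closed, the standard identity $\mbox{Ker}(G_\alpha^*)^\perp = \overline{\mbox{Range}(G_\alpha)} = \mbox{Range}(G_\alpha)$ yields a vector $\U \in \mD(G_\alpha)$ with $(I - Q_\alpha) \PhiAlpha = G_\alpha \U$.

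With this in hand, the heart of the argument is a Poincaré-type inequality on $\mbox{Range}(G_\alpha) \cap \mD(G_\alpha^*)$, namely
\[
\alpha \, \| G_\alpha \U \|_{\mGAlpha} \leq \| G_\alpha^* G_\alpha \U \|_{\mH}.
\]
To prove it, I would start from $\| G_\alpha \U \|_{\mGAlpha}^2 = (G_\alpha \U, G_\alpha \U)_{\mGAlpha} = (G_\alpha^* G_\alpha \U, \U)_{\mH}$, which is legitimate because $G_\alpha \U \in \mD(G_\alpha^*)$. Applying Cauchy–Schwarz gives $\| G_\alpha \U \|_{\mGAlpha}^2 \leq \| G_\alpha^* G_\alpha \U \|_\mH \, \| \U \|_\mH$, and combining with the lower bound $\alpha \| \U \|_\mH \leq \| G_\alpha \U \|_{\mGAlpha}$ (which was the content of the previous theorem, since $\| G_\alpha \U \|_{\mGAlpha}^2 = \| G \U \|_\mG^2 + \alpha^2 \| \U \|_\mH^2$) yields the desired inequality after dividing by $\| G_\alpha \U \|_{\mGAlpha}$.

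Applied to $(I - Q_\alpha) \PhiAlpha$, this gives
\[
\alpha^2 \, \| (I - Q_\alpha) \PhiAlpha \|_{\mGAlpha}^2 \leq \| G_\alpha^* (I - Q_\alpha) \PhiAlpha \|_{\mH}^2 = \| G_\alpha^* \PhiAlpha \|_{\mH}^2.
\]
Using the orthogonality $\| \PhiAlpha \|_{\mGAlpha}^2 = \| Q_\alpha \PhiAlpha \|_{\mGAlpha}^2 + \| (I - Q_\alpha) \PhiAlpha \|_{\mGAlpha}^2$, we get
\[
\alpha^2 \, \| \PhiAlpha \|_{\mGAlpha}^2 \leq \alpha^2 \, \| Q_\alpha \PhiAlpha \|_{\mGAlpha}^2 + \| G_\alpha^* \PhiAlpha \|_{\mH}^2,
\]
and the hypothesis $\alpha \leq 1$ absorbs the $\alpha^2$ in front of $\| Q_\alpha \PhiAlpha \|_{\mGAlpha}^2$ to give the stated bound.

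The only delicate point is the book-keeping of domains: one must check that $(I - Q_\alpha) \PhiAlpha$ really lies in $\mD(G_\alpha^*)$ (so that applying $G_\alpha^*$ to it makes sense) and that the element $\U$ providing $(I - Q_\alpha) \PhiAlpha = G_\alpha \U$ additionally satisfies $G_\alpha \U \in \mD(G_\alpha^*)$, so that $G_\alpha^* G_\alpha \U$ is defined and the Cauchy–Schwarz manipulation is valid. Both follow from the fact that $Q_\alpha \PhiAlpha \in \mbox{Ker}(G_\alpha^*) \subset \mD(G_\alpha^*)$ and that $(I - Q_\alpha) \PhiAlpha = \PhiAlpha - Q_\alpha \PhiAlpha$, so no extra regularity needs to be invoked beyond the closedness results already established.
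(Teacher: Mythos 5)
Your proof is correct and follows the same skeleton as the paper's: decompose $\PhiAlpha$ orthogonally via $Q_\alpha$, bound the component in $\mbox{Ker}(G_\alpha^*)^\perp$ using the coercivity of $G_\alpha$, and absorb the $\alpha^2$ factor using $\alpha\leq 1$. The one substantive difference is that the paper simply \emph{asserts} the key inequality $\| G_\alpha^* (\mathrm{I}-Q_\alpha)\PhiAlpha\|_{\mH}^2 \geq \alpha^2 \|(\mathrm{I}-Q_\alpha)\PhiAlpha\|_{\mGAlpha}^2$ (implicitly invoking the fact that a closed densely defined operator and its adjoint have the same reduced minimum modulus), whereas you actually prove it: you use the closed range of $G_\alpha$ to write $(\mathrm{I}-Q_\alpha)\PhiAlpha = G_\alpha\U$ and then derive the bound from $(G_\alpha\U, G_\alpha\U)_{\mGAlpha} = (\U, G_\alpha^* G_\alpha\U)_{\mH}$, Cauchy--Schwarz, and $\alpha\|\U\|_\mH \leq \|G_\alpha\U\|_{\mGAlpha}$. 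Your domain book-keeping is also sound, so your version is, if anything, more complete than the one in the paper.
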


\begin{proof}
Let $\PhiAlpha \in \mD(G_\alpha^*)$. We have 
$
  \| Q_\alpha \PhiAlpha \|_{\mGAlpha}^2
+ \| (\text{I}-Q_\alpha) \PhiAlpha \|_{\mGAlpha}^2
= \| \PhiAlpha \|_{\mGAlpha}^2,
$
and  therefore
\[
\| G_\alpha^* \PhiAlpha \|_{\mH}^2
= \| G_\alpha^* (\text{I}-Q_\alpha) \PhiAlpha \|_{\mH}^2
\geq  \alpha^2  \, \| (\text{I}-Q_\alpha) \PhiAlpha \|_{\mGAlpha}^2.
\]
Hence we have
\[
\| Q_\alpha \PhiAlpha \|_{\mGAlpha}^2 
+ \| G_\alpha^* \PhiAlpha \|_{\mH}^2 
\geq   
  \| Q_\alpha \PhiAlpha \|_{\mGAlpha}^2 
+ \alpha^2 \, \| (\text{I}-Q_\alpha) \PhiAlpha \|_{\mGAlpha}^2 
\geq \alpha^2 \| \PhiAlpha \|_{\mGAlpha}^2.
\] 
\end{proof}

The exact expression of $Q_\alpha$ is of no practical interest in what follows. It has to be noted however that it is non trivial, due to the fact that $\text{Ker } G_\alpha^*$ is an infinite dimensional space.
As a direct consequence of Proposition \ref{prop:GraphNorm}, we have  the following bound for the $H^1(\Omega)$-norm, 
\begin{lmm}\label{prop:Lift_Continuity}
Let $\alpha < 1$, there exists a scalar $C_c > 0$ such that
\[
\forall \, \PhiAlpha = 
\begin{pmatrix}
\Phi \\ {\bf V}
\end{pmatrix} \in \mD(G_\alpha^*)
\text{ with } 
\Phi = \begin{pmatrix}
\varphi \\ \psi \\ \gamma
\end{pmatrix}, 
\quad
\| \PhiAlpha \|_{\mGAlpha}^2
+ \| G_\alpha^* \PhiAlpha \|_{\mH}^2
\geq C_c  \| \varphi \|_{H^1(\Omega)}^2 .
\]
\end{lmm}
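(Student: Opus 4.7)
The plan is to reduce the inequality for the augmented system to the inequality already established for $G^*$ in Proposition \ref{prop:GraphNorm}. The only real work is to control $\|\Phi\|_\mG^2 + \|G^*\Phi\|_\mH^2$ by $\|\PhiAlpha\|_{\mGAlpha}^2 + \|G_\alpha^* \PhiAlpha\|_\mH^2$, which is an elementary algebraic manipulation exploiting the block structure of $\mGAlpha = \mG\times\mH$ and the expression $G_\alpha^* \PhiAlpha = G^*\Phi + \alpha {\bf V}$ from \eqref{def_G_ast_alpha}.

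First I would write, using the definition of the scalar product on $\mGAlpha$, the obvious identity $\|\PhiAlpha\|_{\mGAlpha}^2 = \|\Phi\|_\mG^2 + \|{\bf V}\|_\mH^2$, which immediately gives $\|\Phi\|_\mG^2 \leq \|\PhiAlpha\|_{\mGAlpha}^2$ and $\|{\bf V}\|_\mH^2 \leq \|\PhiAlpha\|_{\mGAlpha}^2$. Next, from $G^*\Phi = G_\alpha^*\PhiAlpha - \alpha {\bf V}$ and the elementary inequality $(a+b)^2 \leq 2a^2 + 2b^2$, I obtain
\[
\|G^*\Phi\|_\mH^2 \leq 2\|G_\alpha^*\PhiAlpha\|_\mH^2 + 2\alpha^2 \|{\bf V}\|_\mH^2 \leq 2\|G_\alpha^*\PhiAlpha\|_\mH^2 + 2 \|\PhiAlpha\|_{\mGAlpha}^2,
\]
where the last step uses the assumption $\alpha \leq 1$.

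Adding the two bounds yields $\|\Phi\|_{\mD(G^*)}^2 \leq 3\|\PhiAlpha\|_{\mGAlpha}^2 + 2\|G_\alpha^*\PhiAlpha\|_\mH^2$, and in particular
\[
\|\Phi\|_{\mD(G^*)}^2 \leq 3\bigl(\|\PhiAlpha\|_{\mGAlpha}^2 + \|G_\alpha^*\PhiAlpha\|_\mH^2\bigr).
\]
Since $\Phi \in \mD(G^*)$ (because $\mD(G_\alpha^*) = \mD(G^*) \times \mH$), Proposition \ref{prop:GraphNorm} applies and gives $\|\Phi\|_{\mD(G^*)}^2 \geq C_c^2 \|\varphi\|_{H^1(\Omega)}^2$. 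Combining the two inequalities and renaming the constant (e.g. replacing $C_c$ by $C_c^2/3$) produces the claimed bound.

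There is essentially no obstacle here; the only point requiring minor care is the use of $\alpha \leq 1$ to absorb the $\alpha^2\|{\bf V}\|_\mH^2$ term into $\|\PhiAlpha\|_{\mGAlpha}^2$, which is precisely why the hypothesis $\alpha < 1$ appears in the statement. The result could in fact be stated for any fixed $\alpha$, with a constant depending on $\alpha$, but the uniform form given here is what will be convenient in the subsequent limit $\alpha \to 0$.
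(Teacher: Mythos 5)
Your proposal is correct and follows essentially the same route as the paper: both proofs exploit the identity $G_\alpha^* \PhiAlpha = G^*\Phi + \alpha \mathbf{V}$ together with the block structure of $\mathbf{G}$, a triangle/Young-type inequality absorbing the $\alpha^2\|\mathbf{V}\|^2$ term via $\alpha<1$, and then conclude with Proposition \ref{prop:GraphNorm}. The only cosmetic difference is that you bound $\|\Phi\|^2_{\mathcal D(G^*)}$ from above by the left-hand side, whereas the paper bounds the left-hand side from below; the ingredients and the resulting constant are the same up to a harmless factor.
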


\begin{proof} By definition and the triangle inequality,
\[
\| \PhiAlpha \|_{\mGAlpha}^2
+ \| G_\alpha^* \PhiAlpha \|_{\mH}^2 \geq \| \Phi \|_{\mG}^2 +  \| {\bf V} \|_{\mH}^2
+ \big(\| G^* \Phi \|_{\mH} - \alpha \| {\bf V} \|_{\mH} \big)^2.
\]
The conclusion follows from the Young's inequality, the assumption on $\alpha$ and Proposition \ref{prop:GraphNorm}.
\end{proof}

Aiming to define the aforementioned lifting operator of a data $u_b  \in H^{-1/2}(\Gamma_b)$ (the dependence in time is omitted), we introduce the following problem: for all $u_b$, 
find $ \PhiAlpha_b \in \mD(G_\alpha^*) $ solution to
\begin{equation} \label{eq:V:Lift}
\forall \tilde \PhiAlpha = 
    \begin{pmatrix}
    \tilde \Phi
    \\
    \tilde {\bf V}
    \end{pmatrix}  \in \mD(G^*_\alpha)
    \text{ with } \tilde \Phi = 
    \begin{pmatrix}
    \tilde \varphi \\ \tilde \psi \\ \tilde \gamma 
    \end{pmatrix},
    \quad 
    (Q_\alpha  \PhiAlpha_b ,  \tilde \PhiAlpha )_\mGAlpha 
    + (G_\alpha^* \PhiAlpha_b, G_\alpha^* \tilde \PhiAlpha )_\mH 
    = \langle u_b, \, \traceB(\tilde \varphi)  
    \rangle_{\Gamma_b}. 
\end{equation}
Thanks to Theorem \ref{Poincare_abstract} and Lemma \ref{prop:Lift_Continuity}, it is classical to show that 
Equation \eqref{eq:V:Lift} admits a unique solution 
$\PhiAlpha_b \in  \mD(G_\alpha^*)$ that depends continuously on $u_b$ (it is a standard application of Lax-Milgram Lemma). 
We then construct the lifting operator $L_\alpha \in \mathcal L(  H^{-1/2}(\Gamma_b), \mH)$ by setting
\[
\forall u_b \in  H^{-1/2}(\Gamma_b), \quad 
L_\alpha (u_b) 
=  - G_\alpha^* \PhiAlpha_b.
\]

\begin{prpstn}\label{property_L}
The function $ L_\alpha(u_b) \in \mH $ has the following properties:
  \[L_\alpha(u_b) \in \mD(\Ge_\alpha)= \mD(\Ge), 
  \quad \Ge_\alpha L_\alpha(u_b) = Q_\alpha \PhiAlpha_b \in \mbox{\normalfont Ker }(G^*_\alpha),
  \quad  \mbox{and} \quad \normTraceB(L_\alpha(u_b))  = u_b.
  \]
\end{prpstn}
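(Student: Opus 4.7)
The strategy is to probe the defining variational equation \eqref{eq:V:Lift} with two different classes of test functions. Restricting to $\tilde\PhiAlpha \in \mD(\Ge^{\, *}_\alpha) \subset \mD(G_\alpha^*)$ kills the boundary contribution and delivers the first two claims via the identity $(\Ge^{\, *}_\alpha)^\ast = \Ge_\alpha$. Testing against arbitrary $\tilde\PhiAlpha \in \mD(G_\alpha^*)$ and combining with an $\alpha$-version of Lemma \ref{GreenFormula} then yields the boundary trace identity.

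\textbf{Step 1.} Let $\tilde\PhiAlpha \in \mD(\Ge^{\, *}_\alpha) = \mD(\Ge^{\, *}) \times \mH$. On this subspace one has $G_\alpha^\ast \tilde\PhiAlpha = \Ge^{\, *}_\alpha \tilde\PhiAlpha$ and $\traceB(\tilde\varphi) = 0$, so the right-hand side of \eqref{eq:V:Lift} vanishes and the equation reduces to
\[
(L_\alpha(u_b), \Ge^{\, *}_\alpha \tilde\PhiAlpha)_\mH = (Q_\alpha \PhiAlpha_b, \tilde\PhiAlpha)_{\mGAlpha}, \qquad \forall \, \tilde\PhiAlpha \in \mD(\Ge^{\, *}_\alpha).
\]
Since $\Ge_\alpha$ is closed and densely defined, $(\Ge^{\, *}_\alpha)^\ast = \Ge_\alpha$, and this identity is precisely the characterization that $L_\alpha(u_b) = -G_\alpha^\ast \PhiAlpha_b$ belongs to $\mD(\Ge_\alpha) = \mD(\Ge)$ with $\Ge_\alpha L_\alpha(u_b) = Q_\alpha \PhiAlpha_b$. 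The containment $Q_\alpha \PhiAlpha_b \in \mbox{Ker}(G_\alpha^*)$ is immediate from the definition of $Q_\alpha$ as the orthogonal projector onto that kernel.

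\textbf{Step 2.} For the trace identity I would first record the $\alpha$-analogue of Lemma \ref{GreenFormula}: for $\U \in \mD(\Ge_\alpha)$ and $\tilde\PhiAlpha = (\tilde\Phi, \tilde{\bf V})^t \in \mD(G_\alpha^*)$ with $\tilde\Phi = (\tilde\varphi, \tilde\psi, \tilde\gamma)^t$,
\[
(\Ge_\alpha \U, \tilde\PhiAlpha)_{\mGAlpha} = (\U, G_\alpha^\ast \tilde\PhiAlpha)_\mH + \langle \normTraceB(\U), \traceB(\tilde\varphi) \rangle_{\Gamma_b},
\]
which follows at once from Lemma \ref{GreenFormula} because the contribution of the $\alpha\,\text{I}$ block is symmetric and so cancels: $(\alpha\U, \tilde{\bf V})_\mH - (\U, \alpha\tilde{\bf V})_\mH = 0$. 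Applying this formula with $\U = L_\alpha(u_b)$ and substituting $\Ge_\alpha L_\alpha(u_b) = Q_\alpha \PhiAlpha_b$ from Step 1, then comparing with \eqref{eq:V:Lift}, yields
\[
\langle u_b - \normTraceB(L_\alpha(u_b)), \traceB(\tilde\varphi) \rangle_{\Gamma_b} = 0, \qquad \forall \, \tilde\PhiAlpha \in \mD(G_\alpha^*),
\]
and the surjectivity of $\traceB : H^1(\Omega) \to H^{1/2}(\Gamma_b)$ (every $\tilde\varphi \in H^1(\Omega)$ can be completed to an element of $\mD(G_\alpha^*)$ by choosing $\tilde\gamma = \traceS(\tilde\varphi)$ and arbitrary $\tilde\psi, \tilde{\bf V}$) concludes $\normTraceB(L_\alpha(u_b)) = u_b$.

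\textbf{Main obstacle.} The only genuinely delicate point is Step 1: identifying $L_\alpha(u_b)$ as a member of $\mD(\Ge_\alpha)$ through its action against $\Ge^{\, *}_\alpha$. This hinges both on the closedness of $\Ge_\alpha$ (which gives $(\Ge^{\, *}_\alpha)^\ast = \Ge_\alpha$) and on the clean separation of \eqref{eq:V:Lift} according to whether the test function has a non-trivial bottom trace. The rest is routine bookkeeping inherited from Lemma \ref{GreenFormula}.
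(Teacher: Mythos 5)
Your proposal is correct and follows essentially the same route as the paper: testing \eqref{eq:V:Lift} against $\mD(\Ge^{\,*}_\alpha)$ and invoking $\Ge_\alpha^{**}=\Ge_\alpha$ for the first two claims, then combining the Green's formula of Lemma \ref{GreenFormula} (in its $\alpha$-version, which the paper derives inline rather than stating separately) with \eqref{eq:V:Lift} for the trace identity. The only differences are cosmetic sign conventions from working with $L_\alpha(u_b)$ rather than $G_\alpha^*\PhiAlpha_b$.
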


\begin{proof}
In the equation \eqref{eq:V:Lift} we choose a  test function $\tilde \PhiAlpha$ in the space $\mD(\Ge^{\, *}_\alpha)$. Since $\gamma_{0,b}(\tilde \varphi) = 0 $,  we obtain
\begin{equation}
(G_\alpha^* \PhiAlpha_b, \Ge^{\, *}_\alpha \tilde \PhiAlpha)_\mH
= -(Q_\alpha \PhiAlpha_b , \tilde \PhiAlpha)_{\mGAlpha} , 
\quad \forall \tilde \PhiAlpha \in \mD(\Ge^{\, *}_\alpha).
\end{equation}
This implies 
\begin{equation}\label{eq:GGphi}
G_\alpha^* \PhiAlpha_b \in \mD(\Ge^{\, **}_\alpha)
= \mD(\Ge_\alpha)  = \mD(\Ge) 
\quad \mbox{ and } \quad  
\Ge^{\, **}_\alpha G_\alpha^* \PhiAlpha_b 
= \Ge_\alpha G_\alpha^* \PhiAlpha_b = - Q_\alpha \PhiAlpha_b =  -  \Ge_\alpha L_\alpha(u_b) ,
\end{equation}
where we have used that $\Ge_\alpha^{**} = \Ge_\alpha$ since $ \Ge_\alpha $ is closed and densely defined. The first two properties of the proposition are proved. 
To prove the last property, we use the abstract Green's formula of Lemma \ref{GreenFormula}: for all $\widetilde \PhiAlpha \in \mD(G^*_\alpha)$ we have,
\begin{align}
(G_\alpha^* \PhiAlpha_b, G_\alpha^* \widetilde \PhiAlpha)_\mH
& = (G_\alpha^* \PhiAlpha_b, G^* \widetilde \Phi)_\mH
+ (G_\alpha^* \PhiAlpha_b, \alpha \widetilde {\bf V})_\mH 
\\
& =   (\Ge (G_\alpha^* \PhiAlpha_b), \widetilde \Phi)_\mH 
- \langle \normTraceB(G_\alpha^* \PhiAlpha_b), \, 
\traceB(\tilde \varphi)  
  \rangle_{\Gamma_b}
+ (G_\alpha^* \PhiAlpha_b, \alpha \widetilde {\bf V})_\mH.
\end{align}
Hence we have
\[
(G_\alpha^* \PhiAlpha_b, G_\alpha^* \widetilde \PhiAlpha)_\mH 
= (\Ge_\alpha (G_\alpha^* \PhiAlpha_b), \widetilde \PhiAlpha)_\mGAlpha
- \langle \normTraceB(G_\alpha^* \PhiAlpha_b), \, 
\traceB(\tilde \varphi)  
  \rangle_{\Gamma_b},
\]
and with the equations \eqref{eq:V:Lift} and \eqref{eq:GGphi} we obtain
\begin{align}
(G_\alpha^* \PhiAlpha_b, G_\alpha^* \widetilde \PhiAlpha)_\mH 
& = - ( Q_\alpha \PhiAlpha_b, \widetilde \PhiAlpha)_\mGAlpha
- \langle \normTraceB(G_\alpha^* \PhiAlpha_b), \, 
\traceB(\tilde \varphi)  
  \rangle_{\Gamma_b}
\\
& = - ( Q_\alpha \PhiAlpha_b, \widetilde \PhiAlpha)_\mGAlpha
+ \langle u_b, \, \traceB(\tilde \varphi)  
  \rangle_{\Gamma_b},
\end{align}
from which we deduce that 
$ \normTraceB (L_\alpha(u_b))  = - \normTraceB(G_\alpha^* \PhiAlpha_b) = u_b.$  
\end{proof}

\begin{rmrk} \label{remark:Phi_b}
Assume now that $  u_b \in L^2( 0,T; H^{-1/2}(\Gamma_b) ) $. 
The operator $L_\alpha$ being continuous from $ H^{-1/2}(\Gamma_b) $ to $ \mH $, one can deduce that
\[
   L_\alpha(u_b) \in L^2(0,T; \mH).
\]
Proposition \ref{property_L} can be extended: we have, for almost all $t \in (0,T)$,
\[
  L_\alpha(u_b(t)) \in \mD(\Ge_\alpha),  \quad 
  \Ge_\alpha L_\alpha(u_b(t)) \in \mbox{\normalfont Ker } G_\alpha^*, 
  \quad \text{and} \quad 
  \normTraceS(L_\alpha(u_b(t)))  = u_b(t).
\]
Moreover, the regularity in time of $ L_\alpha(u_b)$ depends straightforwardly on the regularity in time of $u_b$. In particular,
\begin{equation}
    u_b \in H^{k}(0,T ; H^{-1/2}(\Gamma_b))
    \ \Rightarrow \
    L_\alpha(u_b) \in H^{k}(0,T; \mD(\Ge)).
\end{equation}
\end{rmrk}

%%%%%%%%%%%%%%%%%%%%%%%%%%%%%%%% SUBSUBSECTION %%%%%%%%%%%%%%%%%%%%%%%%%%%

\subsubsection{Existence result} \label{sec:V:Existence}
We show now the existence of solution to the dissipative problem.

\begin{prpstn} \label{prop:V:Existence}
If $u_b \in H^2(0,T; H^{-1/2}(\Gamma_b))$ and $u_b(0) = \frac{d}{dt} u_b(0) = 0,$
the problem \eqref{eq:V:Weak_3}-\eqref{eq:V:Weak_CI_3} admits a unique solution $\U_\alpha$. It satisfies, up to modifications on zero-measure sets,
\begin{equation}\label{reg_U_dyn}
     \U_\alpha \in C^1([0,T]; \mH) \cap C^0([0,T] ; \mD(\Ge)).
\end{equation}
\end{prpstn}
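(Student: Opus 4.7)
The plan is to reduce the problem with inhomogeneous essential boundary condition to a problem with homogeneous boundary condition (acting in $\mD(G_\alpha)$) by means of the lifting operator $L_\alpha$ constructed in the previous subsection. Concretely, set
\[
  \U_\alpha(t) = \U_\alpha^0(t) + L_\alpha\bigl(e^{-\alpha t}u_b(t)\bigr).
\]
By Proposition \ref{property_L} (and the extension discussed in Remark \ref{remark:Phi_b}), the lift $L_\alpha(e^{-\alpha t}u_b)$ belongs to $\mD(\Ge)$ for almost every $t$ and realizes the prescribed normal trace $e^{-\alpha t}u_b$ on $\Gamma_b$, so that condition \eqref{eq:V:Weak_BC_3} will be satisfied as soon as $\gamma_{1,b}(\U_\alpha^0)=0$, that is, $\U_\alpha^0\in\mD(G_\alpha)$. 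Since $u_b\in H^2(0,T;H^{-1/2}(\Gamma_b))$, the regularity transfer described in Remark \ref{remark:Phi_b} yields $L_\alpha(e^{-\alpha\cdot}u_b)\in H^2(0,T;\mD(\Ge))$. Because $u_b(0)=\frac{d}{dt}u_b(0)=0$, the lifting and its first time derivative vanish at $t=0$, so the initial conditions for $\U_\alpha^0$ will also be zero.

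Substituting the ansatz into \eqref{eq:V:Weak_3} produces a problem for $\U_\alpha^0\in\mD(G_\alpha)$. The key simplification is that the stiffness term involving the lift disappears: since $\Ge_\alpha L_\alpha(e^{-\alpha t}u_b)\in\mbox{Ker}(G_\alpha^*)$ by Proposition \ref{property_L}, and $G_\alpha\V$ lies in the range of $G_\alpha$ for every $\V\in\mD(G_\alpha)$, one has
\[
  \bigl(\Ge_\alpha L_\alpha(e^{-\alpha t}u_b),\,G_\alpha \V\bigr)_{\mGAlpha}
  = \bigl(L_\alpha(e^{-\alpha t}u_b),\,G_\alpha^*G_\alpha\V\bigr)_\mH \text{-type pairing} = 0.
\]
The problem for $\U_\alpha^0$ therefore reads: find $\U_\alpha^0\in L^2(0,T;\mD(G_\alpha))$ with $\tfrac{d}{dt}\U_\alpha^0\in L^2(0,T;\mH)$ such that, for every $\V\in\mD(G_\alpha)$,
\[
  \frac{d^2}{dt^2}(\U_\alpha^0,\V)_\mH
  + 2\alpha\frac{d}{dt}(\U_\alpha^0,\V)_\mH
  + (G_\alpha \U_\alpha^0,G_\alpha\V)_{\mGAlpha}
  = - \frac{d^2}{dt^2}(L_\alpha(e^{-\alpha t}u_b),\V)_\mH
  - 2\alpha\frac{d}{dt}(L_\alpha(e^{-\alpha t}u_b),\V)_\mH,
\]
with vanishing initial conditions.

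This is a symmetric second-order evolution problem with damping, driven by a right-hand side that, thanks to the regularity of $L_\alpha(e^{-\alpha\cdot}u_b)$, can be expressed through an $\mH$-source in $L^2(0,T;\mH)$ after one or two integrations by parts in time (using $u_b(0)=u_b'(0)=0$ to kill the boundary contributions). The bilinear form $(G_\alpha\cdot,G_\alpha\cdot)_{\mGAlpha}$ is continuous and non-negative on $\mD(G_\alpha)$, and the damping term $2\alpha(\tfrac{d}{dt}\U_\alpha^0,\V)_\mH$ is harmless for well-posedness. Applying the classical Lions--Magenes theory \cite{lions_non-homogeneous_1972} (or, equivalently, the Hille--Yosida framework for the associated first-order system on $\mD(G_\alpha)\times\mH$, as in \cite{joly_effective_2008}) yields a unique $\U_\alpha^0\in C^0([0,T];\mD(G_\alpha))\cap C^1([0,T];\mH)$. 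Adding back the lift gives the claimed regularity \eqref{reg_U_dyn}, and uniqueness follows from the uniqueness theorem proved in Section \ref{sec:V:Uniqueness}.

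The main obstacle — that $G$ does not have closed range and no lifting of the form $-G^*\Phi_b$ exists — has already been removed in the previous subsection by passing to the dissipative operator $G_\alpha$ and defining $L_\alpha$ through the coercive problem \eqref{eq:V:Lift}. Thus the remaining work is essentially bookkeeping: verifying that the stiffness contribution of the lift vanishes via the orthogonality $\mbox{Ker}(G_\alpha^*)\perp\mbox{Ran}(G_\alpha)$, transferring time regularity from $u_b$ to $L_\alpha(e^{-\alpha\cdot}u_b)$, and invoking the standard existence theorem on the reduced homogeneous problem.
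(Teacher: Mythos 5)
Your proposal is correct and follows essentially the same route as the paper: the same decomposition $\U_\alpha = \U_{\alpha,0} + L_\alpha(e^{-\alpha t}u_b)$, the same use of Proposition \ref{property_L} and Remark \ref{remark:Phi_b} to transfer regularity and kill the stiffness contribution of the lift via $\Ge_\alpha L_\alpha(e^{-\alpha t}u_b)\in\mbox{Ker}(G_\alpha^*)\perp\mbox{Ran}(G_\alpha)$, and the same appeal to standard evolution theory for the reduced homogeneous problem. The only cosmetic difference is that the right-hand side is already in $L^2(0,T;\mH)$ by Remark \ref{remark:Phi_b}, so no integration by parts in time is needed.
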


\begin{proof}
Using the lifting operator, we look first for ${\U_{\alpha,0}(t) \in \mD(G)}$ such that, for all $ \V \in \mD(G)$,
\begin{align}
& \nonumber \frac{d^2}{dt^2} (\U_{\alpha,0}, \V )_\mH 
+ 2 \alpha \frac{d}{dt} (\U_{\alpha,0}, \V )_\mH   
+ (G_\alpha \U_{\alpha,0}, G_\alpha \V)_\mGAlpha \\ & \nonumber \hspace{45pt}  
 = \left( \frac{d^2}{dt^2} \big(e^{-\alpha t} L_\alpha 
(    u_b ) \big)
+ 2 \alpha  \frac{d}{dt} \big( e^{-\alpha t}  L_\alpha 
(    u_b )  \big), \V \right)_\mH
\\ & \hspace{45pt}  
=  e^{-\alpha t} \left(   \frac{d^2}{dt^2} L_\alpha (u_b)  - \alpha^2 L_\alpha (u_b),  \V \right)_\mH
\qquad  \qquad  \qquad \text{in }  \mD'(0,T), \label{eq:V:Existence}
\end{align} 
with vanishing initial conditions.
From the remark above, the data is in 
$L^2(0,T; \mD(\Ge))$.
The existence and uniqueness of $\U_{\alpha,0}$ is then obtained by application of  standard results \cite{dautray_mathematical_2000}, and $\U_{\alpha,0}$ has the following regularity,
\begin{equation}
    \U_{\alpha,0} \in C^1([0,T]; \mH) \cap C^0([0,T] ; \mD(G)).
\end{equation}
Let $\U_\alpha = \U_{\alpha,0} + L_\alpha(e^{-\alpha t} u_b)$. 
Since we have --  up to modifications on zero-measure sets --  that the inclusion of  $H^2(0,T; \mH)  $ into $ C^1([0,T]; \mH)$ (see \cite{evans_partial_2004}),  we deduce that
\begin{equation}
    \U_\alpha \in C^1([0,T]; \mH) \cap C^0([0,T] ; \mD(\Ge)).
\end{equation}
Finally we show that $\U_\alpha$ is solution to \eqref{eq:V:Weak_3}. 
The computations are mostly straightforward, but we give some details for the following term,
\begin{equation}\label{eq:manip_alpha}
(\Ge_\alpha \U_{\alpha}, G_\alpha \V)_\mGAlpha
= (\Ge_\alpha \U_{\alpha, 0}, G_\alpha \V)_\mGAlpha
+ (\Ge_\alpha L_\alpha (e^{-\alpha t}), G_\alpha \V)_\mGAlpha
\end{equation}
Since $ G_\alpha \subset \Ge_\alpha$, it holds $\Ge_\alpha \U_{\alpha, 0} = G_\alpha \U_{\alpha, 0}$, hence the first term of \eqref{eq:manip_alpha} is replaced using \eqref{eq:V:Existence}.
For the second term of \eqref{eq:manip_alpha}, from the properties of the lifting operator given in Proposition \ref{property_L}, it holds
\begin{equation}
(\Ge_\alpha L_\alpha (e^{-\alpha t}u_b), G_\alpha \V)_\mGAlpha
=  e^{-\alpha t} (Q_\alpha \PhiAlpha_b , G_\alpha \V)_\mGAlpha = e^{-\alpha t} (G_\alpha^* Q_\alpha \PhiAlpha_b , \V)_\mH
= 0, 
\end{equation}
which concludes the proof. 
\end{proof}

%%%%%%%%%%%%%%%%%%%%%%%%%%%%%%%%%%%%%%%%%%%%%%%%%%%%%%%%%%%%%
%%%%%%%%%%%%%%%%%%%%%%%%%%%%%%%%%%%%%%%%%%%%%%%%%%%%%%%%%%%%%

%%%%%%%%%%%%%%%%%      SECTION    %%%%%%%%%%%%%%%%%%%%%%%%%%%

%%%%%%%%%%%%%%%%%%%%%%%%%%%%%%%%%%%%%%%%%%%%%%%%%%%%%%%%%%%%%
%%%%%%%%%%%%%%%%%%%%%%%%%%%%%%%%%%%%%%%%%%%%%%%%%%%%%%%%%%%%%

\section{From potential-based solutions to velocity-field solutions} \label{sec:Equivalence}
In Sections \ref{sec:Potential} and \ref{sec:Velocity}, we have proved that both the velocity-field problem \eqref{eq:V:Weak_1}-\eqref{eq:V:Weak_CI_1} and the potential-based problem  \eqref{eq:P:Standard_weak}-\eqref{eq:P:Standard_CI} have a unique solution.
The aim of this section is to prove that velocity-fields solution to \eqref{eq:V:Weak_1}-\eqref{eq:V:Weak_CI_1} can be constructed from potential-based solutions. More precisely, we will prove the following theorem,

\begin{thrm} \label{th:Equivalence}
Let   $u_b \in H^2(0,T;H^{-1/2}(\Gamma_b))$ and $u_b(0) = \frac{d}{dt} u_b(0) = 0$, and let $\Phi$ be the solution to the problem \eqref{eq:P:Standard_weak}-\eqref{eq:P:Standard_CI} with source term $u_b$. 
Then $\U = G^* \Phi$ is the unique solution to the problem \eqref{eq:V:Weak_1}-\eqref{eq:V:Weak_CI_1} with the same source term.
\end{thrm}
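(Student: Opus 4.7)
The plan is to set $\U := G^{\ast}\Phi$ and check that every requirement of \eqref{eq:V:Weak_1}--\eqref{eq:V:Weak_CI_1} is met, transferring information between the two formulations via the Green's formula of Lemma~\ref{GreenFormula}. Uniqueness is already provided by the result of Section~\ref{sec:V:Uniqueness}, so only the existence of a solution constructed in this way needs to be verified.

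The first step is a regularity upgrade for $\Phi$. Under $u_b \in H^{2}(0,T; H^{-1/2}(\Gamma_b))$ with $u_b(0) = \tfrac{d}{dt}u_b(0) = 0$, formally differentiating \eqref{eq:P:Standard_weak} in time shows that $\tfrac{d\Phi}{dt}$ satisfies the same abstract evolution with source $\tfrac{d u_b}{dt}$. Because $u_b(0) = 0$ and $\Phi(0) = \tfrac{d\Phi}{dt}(0) = 0$, the initial data of this differentiated problem vanish ($\tfrac{d^{2}\Phi}{dt^{2}}(0) = 0$ follows from \eqref{eq:P:Standard_weak} evaluated at $t=0$), so Proposition~\ref{prop:P:Existence} applies and yields $\tfrac{d\Phi}{dt} \in C^{0}([0,T]; \mD(G^{\ast})) \cap C^{1}([0,T]; \mG)$. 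Hence $\Phi \in C^{1}([0,T]; \mD(G^{\ast})) \cap C^{2}([0,T]; \mG)$, and since $G^{\ast}$ is closed, $\U = G^{\ast}\Phi \in C^{1}([0,T]; \mH)$ with $\U(0) = \tfrac{d\U}{dt}(0) = 0$.

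The second step, which is also the main obstacle, is to lift $\U$ from $\mH$ into $\mD(\Ge)$. Testing \eqref{eq:P:Standard_weak} with $\tilde\Phi \in \mD(\Ge^{\, *}) \subset \mD(G^{\ast})$---so that $\traceB(\tilde\varphi) = 0$, hence $\ell_{b}(t,\tilde\Phi) = 0$, and $G^{\ast}\tilde\Phi = \Ge^{\, *}\tilde\Phi$---yields
\[
\left(\tfrac{d^{2}\Phi}{dt^{2}}, \tilde\Phi\right)_{\mG} + (\U, \Ge^{\, *}\tilde\Phi)_{\mH} = 0, \quad \forall\,\tilde\Phi \in \mD(\Ge^{\, *}).
\]
Since $\tfrac{d^{2}\Phi}{dt^{2}}(t) \in \mG$ for every $t$, the linear form $\tilde\Phi \mapsto (\U, \Ge^{\, *}\tilde\Phi)_{\mH}$ is continuous in the $\mG$-topology on $\mD(\Ge^{\, *})$. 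By the definition of the adjoint, and using $(\Ge^{\, *})^{*} = \Ge$ since $\Ge$ is closed and densely defined, we deduce $\U(t) \in \mD(\Ge)$ and $\Ge\U = -\tfrac{d^{2}\Phi}{dt^{2}}$ in $\mG$. This is the nontrivial passage from the potential problem, which has no essential boundary condition on $\Gamma_b$, to the velocity problem, where $\Ge\U$ must belong to $\mG$ and $\normTraceB(\U)$ must be a prescribed datum.

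Finally, the boundary condition and the volume equation follow. Testing \eqref{eq:P:Standard_weak} against arbitrary $\tilde\Phi \in \mD(G^{\ast})$ and applying Lemma~\ref{GreenFormula}---now legitimate because $\U \in \mD(\Ge)$---the $\mG$ contributions cancel thanks to the identity $\Ge\U = -\tfrac{d^{2}\Phi}{dt^{2}}$, leaving a boundary identity from which $\normTraceB(\U) = u_{b}$ follows by surjectivity of $\traceB$. For any $\V \in \mD(G) \subset \mD(\Ge)$ the adjoint identity $(\U, \V)_{\mH} = (\Phi, G\V)_{\mG}$ can be differentiated twice in time thanks to the $C^{2}$-regularity of $\Phi$ in $\mG$, and substituting $\tfrac{d^{2}\Phi}{dt^{2}} = -\Ge\U$ produces exactly \eqref{eq:V:Weak_1}. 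Combined with the initial conditions and the boundary identity already established, $\U$ solves \eqref{eq:V:Weak_1}--\eqref{eq:V:Weak_CI_1} and therefore coincides with the unique solution.
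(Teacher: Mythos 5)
Your proof is correct in substance but follows a genuinely different route from the paper's. The paper proves Theorem~\ref{th:Equivalence} indirectly: it first establishes the correspondence for the artificially dissipative problems (Theorem~\ref{th:Equivalence_Volume}, which relies on a time-integration trick and the Von Neumann theorem to pass from test functions in $\mD(G_\alpha^* G_\alpha)$ to $\mD(G_\alpha)$), transfers the result to boundary sources through the lifting operator $L_\alpha$, and finally removes the dissipation by a weak-limit argument $\alpha \to 0$ based on uniform energy estimates. You instead argue directly: a regularity bootstrap (differentiating the potential problem in time, which is legitimate under $u_b \in H^2$ with $u_b(0) = \tfrac{d}{dt}u_b(0)=0$) gives $\tfrac{d^2\Phi}{dt^2} \in C^0([0,T];\mG)$ pointwise in time, and the biduality identity $(\Ge^{\,*})^* = \Ge$ then upgrades $\U = G^*\Phi$ to $\mD(\Ge)$ with $\Ge\U = -\tfrac{d^2\Phi}{dt^2}$; Green's formula and the adjoint identity $(\U,\V)_\mH = (\Phi, G\V)_\mG$ do the rest. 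This is exactly the mechanism the paper deploys for the \emph{static} lifting problem in Proposition~\ref{property_L}, but applied to the evolution problem itself, which the paper does not do. Your route is shorter, bypasses the non-closed-range obstruction, the dissipative regularization and the limit process entirely, and as a by-product re-proves the existence part of Proposition~\ref{prop:V:Existence} (only the uniqueness result of Section~\ref{sec:V:Uniqueness} is borrowed); the paper's detour, on the other hand, yields the dissipative correspondence as a statement of independent interest and reuses machinery it needs anyway for existence. Two minor points: (i) the cleanest justification of the regularity bootstrap is to solve the potential problem with source $\tfrac{d}{dt}u_b$ and zero data and identify its time integral with $\Phi$ by uniqueness, rather than differentiating and checking $\tfrac{d^2\Phi}{dt^2}(0)=0$; (ii) with the paper's literal sign conventions in \eqref{eq:P:Standard_weak} and Lemma~\ref{GreenFormula} your computation actually yields $\normTraceB(\U) = -u_b$, but this sign discrepancy is already present inside the paper (compare the source terms of \eqref{eq:P:Non_standard_weak} and \eqref{eq:P:Standard_weak}) and affects neither the structure of your argument nor of theirs.
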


A key ingredient in the following analysis is the Von Neumann theorem 
%\cite[Chap. 12]{grubb_distributions_2009}
\cite{grubb_distributions_2009}, that we recall below for the sake of completeness. 
\begin{thrm}[Von Neumann]
If $T:\mD(T) : \mH \to \mG$ is a closed densely defined operator, then 
$T^* T$ is self-adjoint and $\mD(T^* T)$ is dense in $\mD(T)$. 
\end{thrm}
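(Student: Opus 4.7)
The plan is to follow the standard graph-theoretic proof of von Neumann's theorem. First, I would equip the product spaces $\mH \times \mG$ and $\mG \times \mH$ with their natural Hilbert structure, and introduce the unitary operator $V: \mG \times \mH \to \mH \times \mG$ defined by $V(g,h) = (-h, g)$. The crucial identity to establish is that the graph of $T^*$, when mapped through $V$, becomes exactly the orthogonal complement of the graph of $T$ inside $\mH \times \mG$. This is essentially the definition of the adjoint rewritten in graph language, and it does not require closedness of $T$.

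Next, because $T$ is closed, its graph $\mathrm{Graph}(T)$ is a closed subspace of $\mH \times \mG$, so we have the orthogonal decomposition
\[
\mH \times \mG = \mathrm{Graph}(T) \oplus V(\mathrm{Graph}(T^*)).
\]
Applying this to the element $(h,0)$ for arbitrary $h \in \mH$ yields unique $u \in \mD(T)$ and $w \in \mD(T^*)$ with $h = u - T^* w$ and $0 = Tu + w$. Substituting gives $w = -Tu \in \mD(T^*)$, so $u \in \mD(T^*T)$, and $(I + T^*T)u = h$. This shows that $I + T^*T$ is a bijection from $\mD(T^*T)$ onto $\mH$. A short computation (using $\|u\|_\mH^2 + \|Tu\|_\mG^2 = \langle (I+T^*T) u, u\rangle_\mH$) shows the inverse $B := (I+T^*T)^{-1}$ is bounded with $\|B\|\le 1$ and symmetric on $\mH$, hence self-adjoint; then $I + T^*T$, as the inverse of an everywhere-defined bounded self-adjoint operator, is itself self-adjoint, and subtracting $I$ gives self-adjointness of $T^*T$.

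For the density statement, I would equip $\mD(T)$ with its graph norm and take $u \in \mD(T)$ orthogonal to $\mD(T^*T)$ in this norm, i.e.\ $\langle u, v\rangle_\mH + \langle Tu, Tv\rangle_\mG = 0$ for every $v \in \mD(T^*T)$. Since such $v$ satisfies $Tv \in \mD(T^*)$, the second term equals $\langle u, T^*Tv\rangle_\mH$, so the orthogonality rewrites as $\langle u, (I+T^*T)v\rangle_\mH = 0$. The surjectivity of $I+T^*T$ established in the previous step then forces $u = 0$, proving the density.

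The main obstacle is not any single computation but keeping the bookkeeping clean: one must carefully verify the orthogonality identity $V(\mathrm{Graph}(T^*)) = \mathrm{Graph}(T)^\perp$, and then be scrupulous about where closedness is used (to guarantee the orthogonal decomposition and, implicitly, that $T^{**} = T$ is available if needed). Everything else—the bound $\|B\|\le 1$, the symmetry of $B$, and the density argument—is essentially a direct application of the bijection provided by $I+T^*T$, so no additional analytic input beyond the graph decomposition is required.
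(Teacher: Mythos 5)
Your proposal is correct, but there is nothing in the paper to compare it against: the paper states this theorem only as a recalled classical result, with a citation to the literature (Grubb's \emph{Distributions and Operators}), and gives no proof of its own. What you have written is the standard argument, essentially von Neumann's original one: the identity $V(\mathrm{Graph}(T^*)) = \mathrm{Graph}(T)^{\perp}$ (which indeed needs only density of $\mD(T)$, so that $T^*$ is well defined), the orthogonal splitting of $\mH \times \mG$ (which is where closedness of $T$ enters), the resulting surjectivity of $I + T^*T$ applied to elements $(h,0)$, the bound and symmetry of $B = (I+T^*T)^{-1}$, and the density of $\mD(T^*T)$ in $\mD(T)$ for the graph norm via that same surjectivity. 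All the steps are sound; the only facts you invoke without proof --- that an everywhere-defined bounded symmetric operator is self-adjoint, and that the inverse of an injective self-adjoint operator is self-adjoint --- are standard and acceptable to cite. One small point worth making explicit if you write this up fully: injectivity of $I + T^*T$ should be recorded (it follows either from the uniqueness of the orthogonal decomposition or from $\langle (I+T^*T)u, u\rangle_\mH = \|u\|_\mH^2 + \|Tu\|_\mG^2$), since you use bijectivity when defining $B$.
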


As already shown in Sec. \ref{sec:V:Uniqueness}, the solution to the problem \eqref{eq:V:Weak_1}-\eqref{eq:V:Weak_CI_1} should be defined as the sum of the solution to a homogeneous problem and the lifting of the source term. 
As presented in Section \ref{sec:Velocity:Relaxed}, the definition of the lifting requires in turn to consider the dissipative problem \eqref{eq:V:Weak_3}-\eqref{eq:V:Weak_CI_3}. 
For this reason, it is more convenient to prove first the equivalence between the dissipative problem  \eqref{eq:V:Weak_3}-\eqref{eq:V:Weak_CI_3} and the corresponding potential-based formulation, that we introduce in the following section.
The result stated in Theorem \ref{th:Equivalence} for the non-dissipative problem \eqref{eq:P:Standard_weak}-\eqref{eq:P:Standard_CI} and \eqref{eq:V:Weak_1}-\eqref{eq:V:Weak_CI_1} is then deduced through a limit process carried out at the end of this section.

%%%%%%%%%%%%%%%%%%%%%%%%%%%%%%%% SUBSECTION %%%%%%%%%%%%%%%%%%%%%%%%%%%

\subsection{The case with artificial dissipation}
In this section we recall the dissipative problem \eqref{eq:V:Weak_3}, and define the associated ``dual'' or ``adjoint'' problem.
For simplicity, we first consider volume sources, and show then how to deduce a result similar to Theorem \ref{th:Equivalence} for problems with boundary sources. 
The dissipative formulation with a volume source is: let $F_\U$ regular enough, find
\begin{equation}
\U_\alpha \in L^2(0,T; \mD(\Ge_\alpha)), \quad \frac{d}{dt}\U_\alpha \in L^2(0,T; \mH),
\end{equation}
 solution to
\begin{align}
\frac{d^2}{dt^2} (\U_\alpha, \V)_\mH + 2 \alpha \frac{d}{dt} (\U_\alpha, \V)_\mH
+ (G_\alpha \U_\alpha, G_\alpha \V)_\mGAlpha
= (F_\U, \V)_\mH, 
& \quad  \forall \V \in \mD(G_\alpha), \quad 
\text{in } \mD'(0,T),
\label{eq:Equiv:U_Weak}
\\
\U_\alpha(0) = \frac{d}{dt}\U_\alpha(0) = 0.
\label{eq:Equiv:U_CI}
\end{align}
The potential-based formulation is defined using the adjoint $G_\alpha^*$, and reads: given $F_\Phi$ regular enough, find
\[
\PhiAlpha_{\alpha} \in L^2(0,T;\mD(G_\alpha^*)), \quad   \frac{d}{dt} \PhiAlpha_{\alpha} \in  L^2(0,T;\mGAlpha)
\] solution to: for all $\tilde \PhiAlpha \in \mD(G_\alpha^*)$
\begin{align}
\frac{d^2}{dt^2} (\PhiAlpha_\alpha , \tilde \PhiAlpha )_\mGAlpha
+ 2 \alpha \frac{d}{dt} 
(\PhiAlpha_\alpha , \tilde \PhiAlpha)_\mGAlpha   
+ (G_\alpha^* \PhiAlpha_\alpha , G_\alpha^* \tilde \PhiAlpha)_\mH 
= (F_\Phi, \tilde \PhiAlpha)_\mGAlpha, 
&  
\quad \mD'(0,T) ,
\label{eq:Equiv:Phi_Weak}
\\
\PhiAlpha_\alpha (0) =\frac{d}{dt} \PhiAlpha_\alpha (0) = 0.
\label{eq:Equiv:Phi_CI}
\end{align}
The existence and uniqueness of the solution to the problems \eqref{eq:Equiv:U_Weak}-\eqref{eq:Equiv:U_CI} and \eqref{eq:Equiv:Phi_Weak}-\eqref{eq:Equiv:Phi_CI} are a direct application of well-known results, given e.g. in \cite{dautray_mathematical_2000}. Whenever $F_{\U} \in L^2(0,T; \mH) $ and $F_{\U} \in L^2(0,T; \mGAlpha), $
there exists 
a unique solution $\U_\alpha$ to \eqref{eq:Equiv:U_Weak}-\eqref{eq:Equiv:U_CI}, and 
a unique solution $\PhiAlpha_\alpha $ to \eqref{eq:Equiv:Phi_Weak}-\eqref{eq:Equiv:Phi_CI}, and they satisfy
\[
 \U_\alpha \in C^1([0,T]; \mH)
\cap C^0([0,T]; \mD(G)), \quad
 \PhiAlpha_\alpha \in C^1([0,T]; \mGAlpha)
\cap C^0([0,T]; \mD(G_\alpha^*)).
\]
We state now a relation property between  problems \eqref{eq:Equiv:U_Weak}-\eqref{eq:Equiv:U_CI} and  \eqref{eq:Equiv:Phi_Weak}-\eqref{eq:Equiv:Phi_CI}.

\begin{thrm} 
\label{th:Equivalence_Volume}
Let ${  \PhiAlpha_\alpha \in \mD(G^*_\alpha)}$ be the solution to the problem 
\eqref{eq:Equiv:Phi_Weak}-\eqref{eq:Equiv:Phi_CI}.
If ${F_\Phi \in L^2(0,T;\mD(G^*_\alpha))}$, then ${\U_\alpha = G^*_\alpha \PhiAlpha_\alpha }$ is the unique solution to the problem 
\eqref{eq:Equiv:U_Weak}-\eqref{eq:Equiv:U_CI}, with source term defined by ${F_\U = G_\alpha^* F_\Phi}$.
\end{thrm}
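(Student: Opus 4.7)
The plan is to read off the velocity formulation from the potential formulation by ``applying $G_\alpha^*$'' to \eqref{eq:Equiv:Phi_Weak}. To make this rigorous, I would test \eqref{eq:Equiv:Phi_Weak} against test functions of the special form $\tilde \PhiAlpha = G_\alpha \V$. The Von Neumann theorem applied to $G_\alpha$ (closed and densely defined) guarantees that $\mD(G_\alpha^* G_\alpha)$ is a dense subspace of $\mD(G_\alpha)$, and for any $\V \in \mD(G_\alpha^* G_\alpha)$ the element $G_\alpha \V$ belongs to $\mD(G_\alpha^*)$, hence is admissible in \eqref{eq:Equiv:Phi_Weak}.

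After this substitution, every inner product in the potential equation collapses to one in $\mH$ via the defining relation $(G_\alpha^* \PhiAlpha, \V)_\mH = (\PhiAlpha, G_\alpha \V)_\mGAlpha$: namely $(\PhiAlpha_\alpha, G_\alpha \V)_\mGAlpha = (\U_\alpha, \V)_\mH$, $(G_\alpha^* \PhiAlpha_\alpha, G_\alpha^* G_\alpha \V)_\mH = (\U_\alpha, G_\alpha^* G_\alpha \V)_\mH$, and $(F_\Phi, G_\alpha \V)_\mGAlpha = (F_\U, \V)_\mH$ (using $F_\Phi \in L^2(0,T; \mD(G_\alpha^*))$ together with $F_\U = G_\alpha^* F_\Phi$). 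The potential equation then reduces, in $\mD'(0,T)$, to
\begin{equation*}
\tfrac{d^2}{dt^2}(\U_\alpha, \V)_\mH
+ 2\alpha \tfrac{d}{dt}(\U_\alpha, \V)_\mH
+ (\U_\alpha, G_\alpha^* G_\alpha \V)_\mH
= (F_\U, \V)_\mH,\quad \V \in \mD(G_\alpha^* G_\alpha).
\end{equation*}

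The main obstacle I anticipate is rewriting $(\U_\alpha, G_\alpha^* G_\alpha \V)_\mH$ as $(\Ge_\alpha \U_\alpha, G_\alpha \V)_\mGAlpha$. Since $\Ge_\alpha$ and $G_\alpha$ agree on $\mD(G_\alpha)$, it suffices to show $\U_\alpha \in \mD(G_\alpha)$ for a.e. $t$; equivalently, $\U_\alpha \in \mD(\Ge_\alpha)$ together with the essential boundary condition $\normTraceB(\U_\alpha) = 0$. Both pieces should follow from the increased regularity forced by the hypothesis $F_\Phi \in L^2(0,T; \mD(G_\alpha^*))$: testing \eqref{eq:Equiv:Phi_Weak} against $\tilde \PhiAlpha \in \mD(\Ge_\alpha^*)$ (so that $\traceB(\tilde \varphi) = 0$) and exploiting $G_\alpha^* \tilde \PhiAlpha = \Ge_\alpha^* \tilde \PhiAlpha$ on this subspace will identify $\U_\alpha$ as an element of $\mD(\Ge_\alpha^{**}) = \mD(\Ge_\alpha)$ and pin down $\Ge_\alpha \U_\alpha$ explicitly; repeating the computation with general $\tilde \PhiAlpha \in \mD(G_\alpha^*)$ and invoking the $\alpha$-version of Green's formula (Lemma \ref{GreenFormula}) will then produce a boundary term $\langle \normTraceB(\U_\alpha), \traceB(\tilde \varphi) \rangle_{\Gamma_b}$ that must vanish for arbitrary $\traceB(\tilde \varphi) \in H^{1/2}(\Gamma_b)$, yielding $\normTraceB(\U_\alpha) = 0$. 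This step mirrors the argument used for the lifting operator in Proposition \ref{property_L}.

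Once these regularity and trace properties are in hand, a density argument passing from $\mD(G_\alpha^* G_\alpha)$ to $\mD(G_\alpha)$ extends the derived equation to all admissible test functions. The vanishing initial conditions $\U_\alpha(0) = \tfrac{d}{dt}\U_\alpha(0) = 0$ follow from the corresponding conditions on $\PhiAlpha_\alpha$ and the continuity of $G_\alpha^* : \mD(G_\alpha^*) \to \mH$. Uniqueness of the velocity problem being already established in Section \ref{sec:V:Uniqueness}, $\U_\alpha = G_\alpha^* \PhiAlpha_\alpha$ is thereby identified as the unique solution of \eqref{eq:Equiv:U_Weak}-\eqref{eq:Equiv:U_CI} with source $F_\U = G_\alpha^* F_\Phi$.
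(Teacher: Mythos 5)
Your overall strategy --- testing \eqref{eq:Equiv:Phi_Weak} with $\tilde\PhiAlpha = G_\alpha\V$, collapsing the inner products through the adjoint relation, and extending from $\mD(G_\alpha^*G_\alpha)$ to $\mD(G_\alpha)$ via the Von Neumann theorem --- is the same mechanism the paper uses, but your route to the crucial regularity statement $\U_\alpha(t)=G_\alpha^*\PhiAlpha_\alpha(t)\in\mD(G_\alpha)$ has a gap. You propose to mimic Proposition \ref{property_L}: test against $\tilde\PhiAlpha\in\mD(\Ge^{\,*}_\alpha)$, observe that the remaining terms define a functional bounded in $\|\tilde\PhiAlpha\|_{\mGAlpha}$, and conclude $G_\alpha^*\PhiAlpha_\alpha(t)\in\mD(\Ge^{\,**}_\alpha)=\mD(\Ge_\alpha)$. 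In the elliptic setting of Proposition \ref{property_L} this works because the residual term $(Q_\alpha\PhiAlpha_b,\tilde\PhiAlpha)_{\mGAlpha}$ is manifestly $\mGAlpha$-bounded. Here, however, the residual contains $\frac{d^2}{dt^2}(\PhiAlpha_\alpha,\tilde\PhiAlpha)_{\mGAlpha}$, and the solution is only known to satisfy $\PhiAlpha_\alpha\in C^1([0,T];\mGAlpha)\cap C^0([0,T];\mD(G_\alpha^*))$, so its second time derivative lives only in a dual space such as $L^2(0,T;\mD(G_\alpha^*)')$ and is not an $\mGAlpha$-valued function at (almost) every $t$. The adjoint-identification argument therefore cannot be run pointwise in time; it presupposes exactly the extra regularity you are trying to establish. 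The same issue contaminates the density step (you need $\V\mapsto(G_\alpha\U_\alpha(t),G_\alpha\V)_{\mGAlpha}$ to be continuous for the graph norm, hence $\U_\alpha\in L^2(0,T;\mD(G_\alpha))$ with quantitative bounds) and the initial condition $\frac{d}{dt}\U_\alpha(0)=0$, which requires $\frac{d}{dt}\PhiAlpha_\alpha(0)\in\mD(G_\alpha^*)$, again not known a priori.

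The paper closes precisely this gap by first integrating in time: setting ${\boldsymbol\Psi}_\alpha(t)=\int_0^t\PhiAlpha_\alpha(s)\,\xdif s$, the integrated equation \eqref{eq:proof:eq_for_psi} has as its residual only $\frac{d}{dt}\PhiAlpha_\alpha$ and $\PhiAlpha_\alpha$ themselves, which are continuous $\mGAlpha$-valued functions; reading that equation pointwise in $t$ against all $\tilde\PhiAlpha\in\mD(G_\alpha^*)$ then legitimately identifies $G_\alpha^*{\boldsymbol\Psi}_\alpha(t)\in\mD((G_\alpha^*)^*)=\mD(G_\alpha)$ --- which delivers membership in $\mD(\Ge_\alpha)$ and the vanishing normal trace on $\Gamma_b$ in one stroke, so no separate Green's-formula step is needed. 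One then substitutes $\tilde\PhiAlpha=G_\alpha\V$, applies the Von Neumann density argument, obtains your equation for the time-integrated velocity ${\bf V}_\alpha=G_\alpha^*{\boldsymbol\Psi}_\alpha$ with source $\int_0^t G_\alpha^*F_\Phi(s)\,\xdif s$, and only at the very end differentiates in time to recover the claim for $\U_\alpha$. If you insert this integration-then-differentiation device in place of your regularity paragraph, the rest of your argument goes through essentially as the paper's does.
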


\begin{proof} 
We introduce the function $ {\boldsymbol \Psi}_\alpha$ defined by
\[
     {\boldsymbol \Psi}_\alpha(t) = \int_0^t \PhiAlpha_\alpha(s) \xdif s.
\]
Integrating in time \eqref{eq:Equiv:Phi_Weak} yields an equation for  $ {\boldsymbol \Psi}_\alpha $, for all $ \tilde \PhiAlpha \in \mD( G_\alpha^*), $ 
\begin{equation}\label{eq:proof:eq_for_psi}
\frac{d^2}{dt^2} ( {\boldsymbol \Psi}_\alpha  , \tilde \PhiAlpha )_\mGAlpha
+ 2 \alpha \frac{d}{dt} 
( {\boldsymbol \Psi}_\alpha  , \tilde \PhiAlpha)_\mGAlpha   
+ (G_\alpha^*  {\boldsymbol \Psi}_\alpha  , G_\alpha^* \tilde \PhiAlpha)_\mH 
= \big( \int_0^t F_\Phi(s) \xdif s, \tilde \Phi \big)_\mGAlpha , 
\end{equation}
and since we have by construction $ {\boldsymbol \Psi}_\alpha \in C^2([0,T]; \mGAlpha)
\cap C^1([0,T]; \mD(G^*_\alpha)) $, we deduce from \eqref{eq:proof:eq_for_psi} that
\begin{equation} \label{eq:Equivalence:Regularity}
 {\boldsymbol \Psi}_\alpha \in C^0([0,T];\mD(G_\alpha G^*_\alpha)).
\end{equation}
Now, for all $\V \in D(G^\ast_\alpha G_\alpha)$ we set $ \tilde \PhiAlpha  = G_\alpha  \V $ in \eqref{eq:proof:eq_for_psi}. We obtain, using the definition of the adjoint of $G_\alpha$, Equation \eqref{eq:Equivalence:Regularity} and the assumption that ${F_\Phi \in L^2(0,T;\mD(G^*))}$, 
\begin{equation}
\frac{d^2}{dt^2} (G_\alpha^*  {\boldsymbol \Psi}_\alpha  ,\V  )_\mH
+ 2 \alpha \frac{d}{dt} 
( G_\alpha^*  {\boldsymbol \Psi}_\alpha  , \V )_\mH   
+ ( G_\alpha G_\alpha^*  {\boldsymbol \Psi}_\alpha  ,  G_\alpha  \V)_\mGAlpha 
= \big( \int_0^t G_\alpha^*  F_\Phi(t') \xdif t', \V \big)_\mH.
\end{equation}
Therefore, setting ${\bf V}_\alpha = G_\alpha^*{\boldsymbol \Psi}_\alpha$, we have that 
\[
{\bf V}_\alpha \in C^1([0,T]; \mH) \cap C^0([0,T]; \mD(G)) 
\]
is solution to: for all $\V \in \mD(G^\ast_\alpha G_\alpha)$,
\begin{equation}\label{eq:proof:eq_for_V}
\frac{d^2}{dt^2} ( {\bf V}_\alpha   ,\V  )_\mH
+ 2 \alpha \frac{d}{dt} 
( {\bf V}_\alpha , \V )_\mH   
+ ( G_\alpha{\bf V}_\alpha  ,  G_\alpha  \V)_\mH 
= \big( \int_0^t G_\alpha^*  F_\Phi(t')\xdif t', \V \big)_\mH.
\end{equation}
 Since $G_\alpha$ is a closed densely defined operator, $\mD(G^*_\alpha G_\alpha)$ is dense in $\mD(G_\alpha)$ by the Von Neumann theorem, hence \eqref{eq:proof:eq_for_V} can be extended to functions in $\V \in \mD(G_\alpha) = \mD(G) $. At this point we have shown that 
\[
    {\bf V}_\alpha =  \int_0^t  G_\alpha^*  \PhiAlpha_\alpha(s) \xdif s
\]
is solution to \eqref{eq:Equiv:U_Weak}-\eqref{eq:Equiv:U_CI}, with source term given by 
\[
 F_\U(t)  =  \int_0^t G_\alpha^*  F_\Phi(s)  \xdif s.
\]
Since $  F_\U  $ is differentiable in time  by construction, one can deduce  that the time derivative of $ {\bf V}_\alpha  $ is solution to $ \eqref{eq:Equiv:U_Weak}-\eqref{eq:Equiv:U_CI} $ with source term $ G_\alpha^*  F_\Phi $, which allows us to conclude.
\end{proof}

We consider now the case of a source term located at the bottom, namely the dissipative problem \eqref{eq:V:Weak_3}-\eqref{eq:V:Weak_BC_3} and its potential-based formulation: find
\[
 \PhiAlpha_{\alpha} \in L^2(0,T;\mD(G_\alpha^*)), \quad   \frac{d}{dt} \PhiAlpha_{\alpha} \in  L^2(0,T;\mGAlpha)
 \] solution to
\begin{align}
%\notag 
\frac{d^2}{dt^2} (\PhiAlpha_\alpha , \tilde \PhiAlpha )_\mGAlpha
+ 2 \alpha \frac{d}{dt} 
(\PhiAlpha_\alpha , \tilde \PhiAlpha)_\mGAlpha   
%\hspace{90pt} & 
%\\[4pt]
+ (G_\alpha^* \PhiAlpha_\alpha , G_\alpha^* \tilde \PhiAlpha)_\mH 
= \langle e^{-\alpha t} u_b , \traceB(\tilde \varphi) \rangle_{\Gamma_b},
& \quad \forall \tilde \PhiAlpha \in \mD(G_\alpha^*), 
\quad \text{ in }  \mD'(0,T),
\label{eq:P_Alpha:Weak}
\\[8pt]
\PhiAlpha_\alpha(0) = \frac{d}{dt} \PhiAlpha_\alpha(0) = 0.
\label{eq:P_Alpha:Weak_CI}
\end{align} 
As soon as the right-hand side of \eqref{eq:P_Alpha:Weak} defines a functional in  $H^2(0,T;\mD(G^*_\alpha)')$ (see the proof of Proposition \ref{prop:P:Existence}), the solution to \eqref{eq:P_Alpha:Weak}-\eqref{eq:P_Alpha:Weak_CI} exists and is unique. 
The result below is deduced from Theorem \ref{th:Equivalence_Volume} above and the property of the lifting operator given in Proposition \ref{property_L}.

\begin{prpstn}
Let  $u_b \in H^2(0,T;H^{-1/2}(\Gamma_b))$ and $u_b(0) = \frac{d}{dt} u_b(0) = 0$, and let $\PhiAlpha_{\alpha}$ be the solution to the problem \eqref{eq:P_Alpha:Weak}-\eqref{eq:P_Alpha:Weak_CI}.
Then $\U_\alpha = G_\alpha^* \PhiAlpha_{\alpha}$ is the unique solution to the problem \eqref{eq:V:Weak_3}-\eqref{eq:V:Weak_CI_3}.
\end{prpstn}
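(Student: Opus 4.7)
The overall strategy is to reduce the boundary-source potential problem \eqref{eq:P_Alpha:Weak}--\eqref{eq:P_Alpha:Weak_CI} to the volume-source potential problem \eqref{eq:Equiv:Phi_Weak}--\eqref{eq:Equiv:Phi_CI}, so that Theorem \ref{th:Equivalence_Volume} can be invoked, and then to reassemble the pieces using the lifting operator of Proposition \ref{property_L}. The elliptic lifting \eqref{eq:V:Lift} is precisely what allows the boundary pairing $\langle e^{-\alpha t} u_b, \traceB(\tilde\varphi)\rangle_{\Gamma_b}$ to be split into an interior forcing plus a stiffness term that can be absorbed.

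\textbf{Step 1 (time-dependent lifting).} For each $t \in [0,T]$, let $\PhiAlpha_b^\alpha(t) \in \mD(G_\alpha^*)$ denote the solution to \eqref{eq:V:Lift} with data $e^{-\alpha t} u_b(t)$. Continuous dependence on the data (Lax--Milgram applied to \eqref{eq:V:Lift}) together with the assumption $u_b \in H^2(0,T; H^{-1/2}(\Gamma_b))$ yields $\PhiAlpha_b^\alpha \in H^2(0,T; \mD(G_\alpha^*))$, and the hypothesis $u_b(0)=\tfrac{d}{dt}u_b(0)=0$ forces $\PhiAlpha_b^\alpha(0) = \tfrac{d}{dt}\PhiAlpha_b^\alpha(0)=0$.

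\textbf{Step 2 (splitting and cancellation).} Set $\PhiAlpha_{\alpha,0} := \PhiAlpha_\alpha - \PhiAlpha_b^\alpha$, the sign being dictated by the need to cancel stiffness contributions. Substituting $\PhiAlpha_\alpha = \PhiAlpha_{\alpha,0} + \PhiAlpha_b^\alpha$ into the left-hand side of \eqref{eq:P_Alpha:Weak} and rewriting the right-hand side through the defining identity \eqref{eq:V:Lift} of $\PhiAlpha_b^\alpha$, the contributions $(G_\alpha^* \PhiAlpha_b^\alpha, G_\alpha^* \tilde\PhiAlpha)_\mH$ produced on the two sides cancel, leaving that $\PhiAlpha_{\alpha,0}$ satisfies \eqref{eq:Equiv:Phi_Weak}--\eqref{eq:Equiv:Phi_CI} with the volume forcing
\[
F_\Phi := Q_\alpha \PhiAlpha_b^\alpha - \tfrac{d^2}{dt^2}\PhiAlpha_b^\alpha - 2\alpha \tfrac{d}{dt}\PhiAlpha_b^\alpha.
\]
Since $Q_\alpha \PhiAlpha_b^\alpha \in \mbox{Ker}(G_\alpha^*) \subset \mD(G_\alpha^*)$ and the time derivatives of $\PhiAlpha_b^\alpha$ lie in $L^2(0,T; \mD(G_\alpha^*))$ by Step~1, we have $F_\Phi \in L^2(0,T; \mD(G_\alpha^*))$; the vanishing initial data of $\PhiAlpha_b^\alpha$ ensures $\PhiAlpha_{\alpha,0}$ starts at rest. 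Thus the assumptions of Theorem \ref{th:Equivalence_Volume} are met.

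\textbf{Step 3 (invoke Theorem \ref{th:Equivalence_Volume} and reassemble).} Theorem \ref{th:Equivalence_Volume} gives that $G_\alpha^* \PhiAlpha_{\alpha,0}$ is the unique solution of the volume-source velocity problem \eqref{eq:Equiv:U_Weak}--\eqref{eq:Equiv:U_CI} with source $F_\U := G_\alpha^* F_\Phi$. Using Proposition \ref{property_L}, namely $G_\alpha^* Q_\alpha \PhiAlpha_b^\alpha = 0$ (since $Q_\alpha \PhiAlpha_b^\alpha \in \mbox{Ker}(G_\alpha^*)$) and $G_\alpha^* \PhiAlpha_b^\alpha = -L_\alpha(e^{-\alpha t} u_b)$, a short calculation identifies $F_\U$ with the right-hand side of \eqref{eq:V:Existence} that defines $\U_{\alpha,0}$ in Proposition \ref{prop:V:Existence}; uniqueness yields $G_\alpha^* \PhiAlpha_{\alpha,0} = \U_{\alpha,0}$. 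Reassembling, $\U_\alpha = G_\alpha^* \PhiAlpha_\alpha = G_\alpha^* \PhiAlpha_{\alpha,0} + G_\alpha^* \PhiAlpha_b^\alpha = \U_{\alpha,0} + L_\alpha(e^{-\alpha t} u_b)$, which is exactly the function built in Proposition \ref{prop:V:Existence} as the unique solution of \eqref{eq:V:Weak_3}--\eqref{eq:V:Weak_CI_3}. The essential boundary condition \eqref{eq:V:Weak_BC_3} is inherited from $\normTraceB(\U_{\alpha,0}) = 0$ (as $\U_{\alpha,0} \in \mD(G)$) and $\normTraceB(L_\alpha(e^{-\alpha t} u_b)) = e^{-\alpha t} u_b$ (Proposition \ref{property_L}); uniqueness has been established in Section~\ref{sec:V:Uniqueness}.

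\textbf{Main obstacle.} The crux of the proof is the stiffness-term cancellation in Step~2. The boundary pairing and the shifted test function each produce a term of the form $(G_\alpha^* \PhiAlpha_b^\alpha, G_\alpha^* \tilde\PhiAlpha)_\mH$, and only their matching signs allow the equation for $\PhiAlpha_{\alpha,0}$ to collapse to the volume-source form of Theorem \ref{th:Equivalence_Volume}. This in turn hinges on the structural role of the elliptic problem \eqref{eq:V:Lift} — its bilinear form contains the coercive $(Q_\alpha \cdot, \cdot)_{\mGAlpha} + (G_\alpha^* \cdot, G_\alpha^* \cdot)_\mH$ combination precisely so that this substitution closes. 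Once the cancellation is verified, the rest is bookkeeping: a direct application of Theorem \ref{th:Equivalence_Volume} combined with the identification of $F_\U$ with the forcing used in Proposition \ref{prop:V:Existence}.
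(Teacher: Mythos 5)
Your proposal is correct and follows essentially the same route as the paper: lift the boundary data via \eqref{eq:V:Lift}, subtract the (exponentially weighted) lifting from $\PhiAlpha_\alpha$ so the stiffness terms cancel and the remainder solves the volume-source problem \eqref{eq:Equiv:Phi_Weak}--\eqref{eq:Equiv:Phi_CI} with $F_\Phi = Q_\alpha\PhiAlpha_b^\alpha - \tfrac{d^2}{dt^2}\PhiAlpha_b^\alpha - 2\alpha\tfrac{d}{dt}\PhiAlpha_b^\alpha$, then apply Theorem \ref{th:Equivalence_Volume} and reassemble via Proposition \ref{property_L}; absorbing $e^{-\alpha t}$ into the lifting data rather than factoring it out is an immaterial difference. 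Only watch the final line: since $G_\alpha^*\PhiAlpha_b^\alpha = -L_\alpha(e^{-\alpha t}u_b)$, the reassembly reads $G_\alpha^*\PhiAlpha_{\alpha,0} + G_\alpha^*\PhiAlpha_b^\alpha = \U_{\alpha,0} - L_\alpha(e^{-\alpha t}u_b)$, a sign-bookkeeping wrinkle that is already present in the paper's own proof and traces back to the sign convention in \eqref{eq:V:Existence}.
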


\begin{proof}
Let $ \PhiAlpha_b(t)$ be the solution to \eqref{eq:V:Lift}
associated to the data $ u_b(t)$ for almost all time $t\in [0,T]$. We have $ \PhiAlpha_b \in H^2(0,T;\mD(G^*_\alpha)) $ and the equation \eqref{eq:P_Alpha:Weak} can be written
\begin{equation}
\frac{d^2}{dt^2} (\PhiAlpha_{\alpha} , \tilde \PhiAlpha )_\mGAlpha   
+ 2 \alpha \frac{d}{dt} 
(\PhiAlpha_{\alpha}  , \tilde \PhiAlpha)_\mGAlpha   
+ (G_\alpha^* \PhiAlpha_{\alpha}  , G_\alpha^* \tilde \PhiAlpha)_\mH
= e^{-\alpha t} (Q_\alpha \PhiAlpha_b, \tilde \PhiAlpha)_\mGAlpha
+ e^{-\alpha t} (G_\alpha^* \PhiAlpha_b, G_\alpha^* \tilde \PhiAlpha)_\mH.
\end{equation}
Let $\PhiAlpha_0 = \PhiAlpha_{\alpha} - e^{-\alpha t} \PhiAlpha_b$ and
 define 
\begin{equation*}
\begin{array}{ll}
F_\PhiAlpha
& \!  \displaystyle = 
- \frac{d^2}{dt^2}  ( e^{-\alpha t} \PhiAlpha_b ) 
- 2 \alpha \frac{d}{dt}   ( e^{-\alpha t} \PhiAlpha_b )
+ e^{-\alpha t} Q_\alpha \PhiAlpha_b \\[8pt]
&  \!  \displaystyle = e^{-\alpha t} \big(\alpha^2 \PhiAlpha_b - \frac{d^2}{dt^2} \PhiAlpha_b +  Q_\alpha \PhiAlpha_b   \big)
\in L^2(0,T;\mD(G^*_\alpha)),
\end{array}
\end{equation*}
then $\PhiAlpha_0$ is the unique solution to: for all $\tilde \PhiAlpha \in \mD(G^*_\alpha), $
\[
\frac{d^2}{dt^2} (\PhiAlpha_0 , \tilde \PhiAlpha )_\mGAlpha
+ 2 \alpha \frac{d}{dt} 
(\PhiAlpha_0 , \tilde \PhiAlpha)_\mGAlpha   
+ (G^*_\alpha \PhiAlpha_0, G^*_\alpha \tilde \PhiAlpha)_\mH
=
( F_\PhiAlpha , \tilde \PhiAlpha )_\mGAlpha,
\]
with vanishing initial data (since $u_b$ and $\frac{d}{dt} u_b$ vanish at the initial time).
Using Theorem \ref{th:Equivalence_Volume}, the function $\U_{\alpha,0}$ defined by  $\U_{\alpha, 0} = G_\alpha^* \PhiAlpha_0 \in \mD(G_\alpha)$ is solution to 
\[
\frac{d^2}{dt^2} (\U_{\alpha,0}, \V)_\mH
+ 2 \alpha \frac{d}{dt} (\U_{\alpha,0} , \V)_\mH
+ (G_\alpha \U_{\alpha,0}, G_\alpha \V)_\mGAlpha
=
( G^*_\alpha F_\PhiAlpha , \V)_\mGAlpha,
\]
where, by construction,
\[
   G^*_\alpha F_\PhiAlpha =  e^{-\alpha t} \big(  
    \frac{d^2}{dt^2} L_\alpha(u_b)  - \alpha^2 L_\alpha(u_b) 
    \big).
\]
This shows that $\U_{\alpha, 0} = G_\alpha^* \PhiAlpha_0$ is indeed the solution to \eqref{eq:V:Existence}. It remains only to define, as in the proof of Theorem \ref{prop:V:Existence}, the function 
$\U_\alpha = \U_{\alpha,0} + e^{-\alpha t}  L_\alpha(u_b) 
= \U_{\alpha, 0} - e^{-\alpha t}  G^*_\alpha \PhiAlpha_b$ 
and observe that it is solution to \eqref{eq:V:Weak_3} with the required regularity. 
\end{proof}

We have shown that for problems with artificial dissipation, the solution $\U_\alpha$ to the velocity-field problem \eqref{eq:V:Weak_3}-\eqref{eq:V:Weak_CI_3} is obtained from the solution $\PhiAlpha_\alpha$ to the potential-based problem \eqref{eq:P_Alpha:Weak}-\eqref{eq:P_Alpha:Weak_CI}, thanks to the relation $\U_\alpha = G_\alpha^* \PhiAlpha_\alpha$. 
The next step is to prove a similar relation for the dissipative free problems \eqref{eq:V:Weak_1}-\eqref{eq:V:Weak_3} and \eqref{eq:P:Standard_weak}-\eqref{eq:P:Standard_CI}. 
It should be noted that the relation between the dissipative potential-based problem \eqref{eq:P_Alpha:Weak}-\eqref{eq:P_Alpha:Weak_CI} and its non dissipative version \eqref{eq:P:Standard_weak}-\eqref{eq:P:Standard_CI} are not related as obviously as for the velocity-field problems. In particular, one does not have a relation of the form $\Phi(t) = e^{\alpha t} \PhiAlpha_\alpha(t)$, where  $\Phi$ is the solution to \eqref{eq:P:Standard_weak}-\eqref{eq:P:Standard_CI} and $\PhiAlpha_\alpha$ is the solution to \eqref{eq:P_Alpha:Weak}-\eqref{eq:P_Alpha:Weak_CI}. 
Deducing $\Phi$ from $\PhiAlpha_\alpha$ requires to study the  limit process   $\alpha \to 0$.

%It should be noted that the dual formulation of the relaxed velocity-field problem is not a relaxed version of the potential-based problem \eqref{eq:P:Standard_weak}-\eqref{eq:P:Standard_CI}. 
%More precisely, one does not have -- unlike the velocity problem -- a relation of the form $\Phi(t) = e^{\alpha t} \PhiAlpha(t)$, where  $\Phi$ is the solution to \eqref{eq:P:Standard_weak}-\eqref{eq:P:Standard_CI} and $\PhiAlpha$ is the solution to \eqref{eq:P_Alpha:Weak}-\eqref{eq:P_Alpha:Weak_CI}. 
%Deducing $\Phi$ from $\PhiAlpha$ requires a limit process where $\alpha \to 0$.

%%%%%%%%%%%%%%%%%%%%%%%%%%%%%%%% SUBSECTION %%%%%%%%%%%%%%%%%%%%%%%%%%%

\subsection{The dissipation free case}
At this point we have shown that $\U$, the solution to \eqref{eq:V:Weak_1}-\eqref{eq:V:Weak_CI_1}, can be expressed as
\[
    \U = e^{\alpha t} \U_\alpha = e^{\alpha t}  G_\alpha^* \PhiAlpha_\alpha.
\]
The main idea is now to pass to the limit when $\alpha$ goes to zero in the equality above, to conclude that $\U =  G^* \Phi$, where $\Phi$ is solution to \eqref{eq:P:Standard_weak}-\eqref{eq:P:Standard_CI}. Note that the weak convergence
\[
    G_\alpha^* \PhiAlpha_\alpha \underset{L^2(0,T;\mH)}{\rightharpoonup}   G^* \Phi
\]
would imply that $   \U  =  G^* \Phi$. It is then sufficient to prove that the weak convergence holds and that the limit $\Phi$ is solution to \eqref{eq:P:Standard_weak}-\eqref{eq:P:Standard_CI}.  \\

To prove the mentioned convergence results we first provide preliminary uniform (with respect to $\alpha$) energy estimates for the solution $\PhiAlpha_\alpha$. Following the same proof as Proposition \ref{prop:Energy}, the following estimation can be proved, 
\begin{equation} \label{eq:Equivalence:Energy}
\frac{1}{2} \| \frac{d}{dt} \PhiAlpha_\alpha(t) \|_{\mGAlpha}^2 
    + 2 \alpha \int_0^t \| \PhiAlpha_\alpha(t')  \|_{\mGAlpha}^2 \, \xdif t'
    +  \frac{1}{2} \|G_\alpha^* \PhiAlpha_\alpha(t) \|_{\mH}^2  
\lesssim  B_\alpha^2(t),     
\end{equation}
where the inequality above holds up to a constant independent of $  \PhiAlpha_\alpha $ or the source term $  u_b $, and with
\[
B_\alpha(t) = 
\sup_{s\in [0,t]}  \| e^{-\alpha s} u_b (t) \|_{H^{-1/2}(\Gamma_b)}
+  \int_0^t \| \partial_t ( e^{-\alpha s} u_b(s)) \|_{H^{-1/2}(\Gamma_b)} \, \xdif s.  
\]
For $\alpha \leq 1$, it holds 
\[
    B_\alpha(t) \leq D(t) := C(t) + \int_0^t \|  u_b(s)) \|_{H^{-1/2}(\Gamma_b)} \, \xdif s,
    \]
    where $B(t)$ is defined in \eqref{eq:def_B}. 
Therefore, the right-hand side of   \eqref{eq:Equivalence:Energy} can be replaced by a positive function independent of $\alpha$. 
For $\alpha \leq 1$, we have
\begin{equation} \label{eq:Equivalence:Energy_uniform}
 \| \frac{d}{dt} \PhiAlpha_\alpha(t) \|_{\mGAlpha}
    +   \|G_\alpha^* \PhiAlpha_\alpha(t) \|_{\mH} 
\lesssim  C(t).
\end{equation}
From \eqref{eq:P_Alpha:Weak}, we also deduce an estimate of the second order time derivative in the Hilbert space $ \mD(G_\alpha^* )' $, 
\begin{equation}\label{eq:Equivalence:strong_uniform}
    \| \frac{d^2}{dt^2} \PhiAlpha_\alpha(t) \|_{D(G_\alpha^* )'} \lesssim  C(t).
\end{equation}
Those preliminary observations allow us to state the following result.

\begin{lmm}
The functions $G_\alpha^* \PhiAlpha_\alpha$ for $\alpha \in \R^+$ converges weakly in $L^2(0,T;\mH)$ to $G^* \Phi$ when $\alpha \to 0$ with $ \Phi$ solution to \eqref{eq:P:Standard_weak}-\eqref{eq:P:Standard_CI}.
\end{lmm}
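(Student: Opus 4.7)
The plan is to argue by weak compactness, letting the dissipation parameter tend to zero. Using the uniform estimates \eqref{eq:Equivalence:Energy_uniform} and \eqref{eq:Equivalence:strong_uniform} together with the vanishing initial data $\PhiAlpha_\alpha(0)=\frac{d}{dt}\PhiAlpha_\alpha(0)=0$, I first establish that the family $\{\PhiAlpha_\alpha\}_{\alpha\in(0,1]}$ is bounded in $L^\infty(0,T;\mGAlpha)\cap H^1(0,T;\mGAlpha)$, that $\{G_\alpha^*\PhiAlpha_\alpha\}$ is bounded in $L^\infty(0,T;\mH)$, and that $\{\frac{d^2}{dt^2}\PhiAlpha_\alpha\}$ is bounded in $L^2(0,T;\mD(G_\alpha^*)')$. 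From any sequence $\alpha_n\to 0$, I extract a subsequence (not relabeled) so that, writing $\PhiAlpha_{\alpha_n}=(\Phi_{\alpha_n},{\bf V}_{\alpha_n})$,
\[
\Phi_{\alpha_n}\rightharpoonup \Phi\ \ \text{in}\ H^1(0,T;\mG),\quad
{\bf V}_{\alpha_n}\rightharpoonup {\bf V}\ \ \text{in}\ H^1(0,T;\mH),\quad
G_{\alpha_n}^*\PhiAlpha_{\alpha_n}\rightharpoonup W\ \ \text{in}\ L^2(0,T;\mH).
\]

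The next step is to identify $W$ as $G^*\Phi$. Writing $G_{\alpha_n}^*\PhiAlpha_{\alpha_n}=G^*\Phi_{\alpha_n}+\alpha_n{\bf V}_{\alpha_n}$ and using that $\alpha_n{\bf V}_{\alpha_n}\to 0$ strongly in $L^2(0,T;\mH)$, I obtain $G^*\Phi_{\alpha_n}\rightharpoonup W$. Because $G^*$ is closed and densely defined, its graph, viewed as a linear subspace of $L^2(0,T;\mG)\times L^2(0,T;\mH)$, is weakly closed, so $\Phi\in L^2(0,T;\mD(G^*))$ with $G^*\Phi=W$.

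I then pass to the limit in the weak formulation \eqref{eq:P_Alpha:Weak}. For an arbitrary $\tilde\Phi\in\mD(G^*)$, I pick the specific test function $\tilde\PhiAlpha=(\tilde\Phi,0)\in\mD(G_\alpha^*)$, for which $G_\alpha^*\tilde\PhiAlpha=G^*\tilde\Phi$ is independent of $\alpha$. The equation becomes
\[
\frac{d^2}{dt^2}(\Phi_{\alpha_n},\tilde\Phi)_\mG
+2\alpha_n\frac{d}{dt}(\Phi_{\alpha_n},\tilde\Phi)_\mG
+(G_{\alpha_n}^*\PhiAlpha_{\alpha_n},G^*\tilde\Phi)_\mH
=\langle e^{-\alpha_n t}u_b,\traceB(\tilde\varphi)\rangle_{\Gamma_b}, \ \text{in}\ \mD'(0,T),
\]
and the weak convergences established above, combined with dominated convergence applied to $e^{-\alpha_n t}\to 1$ for the right-hand side, let me pass to the limit in every term to recover \eqref{eq:P:Standard_weak}. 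The initial condition $\Phi(0)=0$ follows from the continuous embedding $H^1(0,T;\mGAlpha)\hookrightarrow C^0([0,T];\mGAlpha)$ evaluated at $t=0$; the condition $\frac{d}{dt}\Phi(0)=0$ is recovered in the dual-space sense from the uniform bound on the second derivative of $\PhiAlpha_\alpha$ and the corresponding vanishing initial condition of its first derivative.

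Finally, by Proposition \ref{prop:P:Existence} the limit $\Phi$ is the unique solution to \eqref{eq:P:Standard_weak}-\eqref{eq:P:Standard_CI}, so it does not depend on the extracted subsequence, and the standard subsequence argument upgrades the subsequential convergence to convergence of the whole family: $G_\alpha^*\PhiAlpha_\alpha\rightharpoonup G^*\Phi$ in $L^2(0,T;\mH)$. I expect the main obstacle to be the recovery of the initial condition on $\frac{d}{dt}\Phi$, because weak convergence in $H^1(0,T;\mGAlpha)$ does not directly transfer pointwise values of the first derivative at $t=0$; this step requires careful handling of the second-derivative bound in a dual space whose domain itself depends on $\alpha$.
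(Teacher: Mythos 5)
Your proposal is correct and follows essentially the same route as the paper: uniform (in $\alpha$) energy and dual-norm estimates, weak compactness, passage to the limit in \eqref{eq:P_Alpha:Weak} with test functions of the form $(\tilde\Phi,0)$, recovery of the vanishing initial data from the weak convergence in $H^1(0,T;\mGAlpha)$ and $H^2(0,T;\mD(G_\alpha^*)')$, and identification of the limit via uniqueness. The only differences are cosmetic refinements in your favor --- you identify the limit $W=G^*\Phi$ explicitly through the weak closedness of the graph of $G^*$ (the paper gets this implicitly from weak convergence in $L^2(0,T;\mD(G_\alpha^*))$ with the graph norm, noting that $\mD(G_\alpha^*)=\mD(G^*)\times\mH$ is independent of $\alpha$), and you spell out the subsequence-to-full-family upgrade that the paper leaves tacit.
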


\begin{proof} 
Using the estimates \eqref{eq:Equivalence:Energy_uniform}-\eqref{eq:Equivalence:strong_uniform} and the fact that $\PhiAlpha_\alpha  $ vanishes at the initial time, we have  that $ \PhiAlpha_\alpha $, when considered as a sequence in $\alpha$, is bounded in the Hilbert space
\[
  \mathcal  W:=  L^2(0,T; \mD(G_\alpha^*) ).
  % H^2(0,T;  D(G_\alpha^* )' ) \cap H^1(0,T; \mGAlpha) \cap
\]
Note that the domain of the operator $G^*_\alpha$ is independent of $\alpha$, from the definition $\mD(G^*_\alpha) = \mD(G^*) \times \mH$. Hence, we deduce that $   \PhiAlpha_\alpha $     converges weakly -- up to a subsequence -- to a function $ \PhiAlpha $ in $\mathcal W$.  
Decomposing $\PhiAlpha = (\Phi \ {\bf V}) ^t$ and passing
to the limit, $\alpha \to 0$ in the formulation \eqref{eq:P_Alpha:Weak}, we obtain
 \begin{equation} \label{eq:Equiv:Decomp_Phi}
   \frac{d^2}{dt^2} (\Phi, \tilde \Phi )_\mG  
 + (G^* \Phi, G^* \tilde \Phi)_\mH
  = \langle u_b , \tilde \varphi \rangle_{\Gamma_b}
 \quad \forall \tilde \PhiAlpha = (\Phi \ 0)^t  \in \mD(G^*) \times \mH,
\quad \text{ in }  \mD'(0,T), 
 \end{equation}
 and 
 \begin{equation} \label{eq:Equiv:Decomp_V}
  \frac{d^2}{dt^2} ({\bf V}, \tilde {\bf V})_\mH  
  = 0
 \quad \forall \tilde \PhiAlpha = (0\ \tilde {\bf V})^t  \in \mD(G^*) \times \mH,
\quad \text{ in }  \mD'(0,T).
 \end{equation}
Hence $\Phi$ is solution to \eqref{eq:P:Standard_weak}. This almost concludes the proof. Indeed, only initial conditions must be investigated to conclude. From the energy estimate \eqref{eq:Equivalence:Energy_uniform} we also have the weak convergence in $H^1(0,T;\mGAlpha)$, therefore, for all $\tilde \PhiAlpha \in \mGAlpha,$ 
 \[
\big(\PhiAlpha_\alpha(0), \tilde \PhiAlpha   \big)_{\mGAlpha} 
= \frac{1}{T}\int_{0}^T  \frac{d}{dt}  \big( 
(t-T) ( \PhiAlpha_\alpha,  \tilde \PhiAlpha)_{\mGAlpha} 
\big) \xdif t 
\underset{\alpha \rightarrow 0}{\longrightarrow} 
\frac{1}{T}\int_{0}^T  \frac{d}{dt}  \big( 
(t-T) ( \PhiAlpha,  \tilde \PhiAlpha)_{\mGAlpha} 
\big) \xdif t 
= \big(\PhiAlpha(0), \tilde \PhiAlpha   \big)_{\mGAlpha},
 \]
 hence $ \PhiAlpha(0) = \PhiAlpha_\alpha(0) = 0.$ Similarly, thanks to the weak convergence in $H^2(0,T;  (\mD(G^*) \times \mH)' ) $ we conclude that 
 \[
\frac{d}{dt} \PhiAlpha(0) = \frac{d}{dt} \PhiAlpha_\alpha(0) = 0.
 \]
 
\end{proof}

We have shown that $\PhiAlpha_\alpha$ converges weakly to $\Phi$, that the limit $\Phi$ is solution to  \eqref{eq:P:Standard_weak}-\eqref{eq:P:Standard_CI}, and that $ G_\alpha^* \PhiAlpha_\alpha$ converges weakly to $G^* \Phi$. Then $\U = G^* \Phi$ is solution to \eqref{eq:V:Weak_1}-\eqref{eq:V:Weak_CI_1}, which proves Theorem \ref{th:Equivalence}.

%%%%%%%%%%%%%%%%%%%%%%%%%%%%%%%%%%%%%%%%%%%%%%%%%%%%%%%%%%%%%

%%%%%%%%%%%%%%%%%      SECTION    %%%%%%%%%%%%%%%%%%%%%%%%%%%

%%%%%%%%%%%%%%%%%%%%%%%%%%%%%%%%%%%%%%%%%%%%%%%%%%%%%%%%%%%%%

\section{Numerical illustration} \label{sec:Approximation}

We now use the two formulations \eqref{eq:V:Weak_1}-\eqref{eq:V:Weak_CI_1} and \eqref{eq:P:Non_standard_weak}  to give several numerical illustrations of tsunamis and hydro-acoustic waves generated by an earthquake or landslide source. 
Three sets of simulations are presented in this section.
The first one reproduces a classical scenario of an submarine earthquake generating a tsunami and acoustic waves. 
This scenario is used for several validations: convergence analysis, validating the model by comparison with the literature,
and illustrating the equivalence of the potential-based and velocity-field formulations. 
In the second set of simulations, the equation $\U = G^* \Phi$ is used to investigate the validity of the classical irrotational assumption for the velocity field. 
The third simulation is a preliminary work towards the study of acoustic waves generated by submarine landslides. 
We compute the spectrogram of pressure recorded at a given sensor for a source located on the seabed which emits a range of frequencies typical of those observed in the field. 
This novel simulation highlights the interference pattern caused by the reflection of the acoustic waves at the surface.  

We first describe the choice of parameters. Then the discretization of the velocity-field and the potential-based simulations are presented.
Finally, for each set of simulations, we describe the scenario and provide some illustrations.

As stated in the introduction, the background functions $\rho_0, c_0$ must satisfy the following positivity property, 
\begin{equation}
    - \frac{g}{\rho_0} \frac{d\rho_0}{dz} - \frac{g^2}{c_0^2} = N^2 \geq  0. 
\end{equation}
Unless stated otherwise, we use the simplified case where $N$ and $c_0$ are chosen constant. 
This gives an ordinary differential equation satisfied by $\rho_0$, 
\begin{equation}
    \frac{d\rho_0}{dz} 
    + \rho_0
    \left( \frac{N^2}{g} + \frac{g}{c_0^2} \right) 
    = 0,
\end{equation}
hence the density profile has the form
\begin{equation}
    \rho_0(z) = \rho_0(0) \exp(- n^2  z), 
    \quad n^2 = \frac{N^2}{g} + \frac{g}{c_0^2} > 0. 
\end{equation}
The numerical values used in the simulations are $\rho_0=1000$~kg m$^{-3}$, 
$c_0=1500$~ms$^{-1}$ and $N=0.001$~s$^{-1}$. 
Note that the model can also handle more complex stratification, where the background functions are computed from a given temperature profile. This yields depth-dependent $N$ and $c_0$.
For conciseness, the case with a given temperature profile is presented in Appendix \ref{sec:A:Buoyancy}.

%%%%%%%%%%%%%%%%%%%%%%%%%%%%%%%% SUBSECTION %%%%%%%%%%%%%%%%%%%%%%%%%%%

\subsection{Discretization}
In this part, we introduce the variational formulations to be discretized and the notations for the finite element approximation. 
The space discretization, obtained with a high-order spectral element method \cite{komatitsch_spectral_1998, cohen_higher-order_2001} is then written.
We end this part by presenting the fully-discrete scheme, which uses a second-order discretization in time.

\subsubsection{velocity-field formulation}
For the continuous problem \eqref{eq:V:Weak_1}-\eqref{eq:V:Weak_CI_1}, the velocity is sought in a subspace of $\HDiv$.
The discretization of $\HDiv$ can be done using e.g. Raviart-Thomas elements \cite{brezzi_mixed_1991}, but the problem is discretized in $V_h \subset H^1(\Omega)$ for the simplicity of implementation. 
Such a strategy is certainly not adequate for harmonic problems \cite{bonnet-ben_dhia_time-harmonic_2007}, or for transient problems with a mean flow \cite{bonnet-ben_dhia_regularisation_2006}. However, for the current problem we have not observed poor behaviour of the solution with this choice. 
To impose the boundary condition we use a Lagrange multiplier, hence, a finite dimensional space $M_h \subset L^2(\Omega)$ is introduced. The following discrete variational formulation is considered: find 
\begin{equation}
    \U_h \in C^2([0,T];V_h^d)  \quad \mbox{ and } \quad \lambda_h  \in C^0([0,T];M_h) 
\end{equation} 
solution to 
\begin{align}
\frac{d^2}{dt^2} (\U_h(t), \V_h)_\mH
+ (\Ge \U_h(t), G \V_h)_\mG
+ ( \lambda_h, \V_h \cdot \normal_b)_{L^2(\Gamma_b)}  = 0, 
& \quad \forall \,  \V_h \in V_h^d,
\quad \forall t \in [0,T], \label{eq:Numeric:Velocity}
\\
(\U_h(t) \cdot \normal_b - u_b, \mu_h)_{L^2(\Gamma_b)} = 0
&  \quad   \forall \,  \mu_h \in M_h,
\quad \forall t \in [0,T].
\label{eq:Num:Velocity_BC} 
\end{align}
The spectral element method is used for the construction of $V_h$, and $M_h$ is constructed using traces of functions in $V_h$. 
Adequate quadrature formulae (that use Gauss-Lobatto quadrature rule) are used to compute integrals, leading in particular to mass lumping: the mass matrix is diagonal. 
The semi-discrete algebraic 
problem reads
\begin{align} \label{eq:Numeric:Matrix_U}
    \frac{d^2}{dt^2} \mathbb M_\U \, U_h
   + \mathbb K_\U \, U_h
   +C_h \Lambda_h= 0, \\
   C_h^T U_h - U_{b,h} = 0,
\end{align}
where $ U_h $  and $ \Lambda_h $ are vectors of degrees of freedoms. 
The time interval $[0,T]$ is partitioned into $M$ equal intervals of length 
$\Delta t = T/M$. 
The finite-element approximation \eqref{eq:Numeric:Matrix_U} is discretized in time with a leapfrog scheme. 
We consider the sequence $\{ U_h^n  \in V_{h} \}_{n \in \{1,..., M \}}$ solution to
\begin{align} \label{eq:Num:U_Leapfrog}
    \mathbb M_\U \, 
    \frac{U_h^{n+1} - 2 U_h^n + U_h^{n-1}}{\Delta t^2}
   + \mathbb K_\U \, U_h^n +  C_h \Lambda_h^n
   = 0, \\
   C_h^T U_h^n - U_{b,h}(n\Delta t) = 0.
\end{align}
Note that in \eqref{eq:Num:U_Leapfrog} the mass matrix $ \mathbb M_\U $ is diagonal, hence easily invertible. Moreover, at each time step, the matrix $C_h^T  C_h $ must also be inverted. It is also diagonal when adequate quadrature formulae are used and thanks to our choice of having $M_h$ as a space of traces of functions of $V_h.$

\subsubsection{Potential-based formulation}
As presented in Section  \ref{sec:Potential}, we use the variational formulation \eqref{eq:P:Non_standard_weak}. 
The problem is written as a coupled system for the variables $(\varphi(t), \psi(t)) \in H^1(\Omega) \times L^2(\Omega)$ for all time $t \in [0,T]$.
By taking test function $\tilde \psi = 0$ in \eqref{eq:P:Non_standard_weak}, the variational formulation reads
\begin{multline} \label{eq:Numeric:phi}
\frac{d^2}{dt^2}
\left( \intdom \frac{\rho_0}{c_0^2} \varphi \tilde \varphi \xdif x 
+ \ints \frac{\rho_0}{g} \varphi \tilde \varphi
\right)
+ \intdom \rho_0 \left(\nabla \varphi
- \frac{N^2}{g} \varphi \,  \e{z} \right) 
\cdot 
\left(\nabla \tilde \varphi
- \frac{N^2}{g} \tilde \varphi \,  \e{z} \right) \xdif x
%%%%%%%
\\
- \intdom \rho_0 N \psi  \e{z}
\cdot 
\left(\nabla \tilde \varphi
- \frac{N^2}{g} \tilde \varphi \,  \e{z} \right) \xdif x
=
- \intb \frac{\rho_0}{g} u_b \tilde \varphi, 
\end{multline}
and by taking test function $\tilde \varphi = 0$ in \eqref{eq:P:Non_standard_weak}, the variational formulation reads
\begin{equation} \label{eq:Numeric:psi}
\frac{d^2}{dt^2}
\intdom \rho_0 \psi \tilde \psi \xdif x 
- \intdom \rho_0 \left( 
\nabla \varphi
- \frac{N^2}{g} \varphi \,  \e{z} 
\right) 
\cdot 
\left(N \tilde \psi \,  \e{z} \right) \xdif x
%%%%%%
+ \intdom \rho_0 N^2 \psi \tilde \psi \xdif x
= 0.
\end{equation}

%%%%%%%%%%%%%%%%%%%%%%%%%%%%%%%%%%%%%%%%%%%%%%%%
%%%%%%%%%%%%%%%%%%%%%%%%%%%%%%%%%%%%%%%%%%%%%%%%

We introduce a finite-dimensional space $L_h  \subset L^2(\Omega)$.
The following bilinear forms are defined for $ \varphi_h, \tilde \varphi_h \in V_h$ and 
$\psi_h, \tilde \psi_h \in L_h$, 
\begin{align}
m_{\varphi}(\varphi_h, \tilde \varphi_h) 
    &= (\frac{\rho_0}{c_0^2} \varphi_h, \tilde \varphi_h )_{L^2(\Omega)}
    + (\frac{\rho_0}{g} \varphi_h, \tilde \varphi_h )_{L^2(\Gamma_s)},
\quad 
m_{\psi}(\psi_h, \psi_h) 
    = (\rho_0 \psi_h, \tilde \psi_h)_{L^2(\Omega)}, 
\\
a_{\varphi}(\varphi_h, \varphi_h)
    &= (\nabla \varphi_h - N^2 \varphi_h \, \e{z}, 
    \nabla \varphi_h - N^2 \varphi_h \, \e{z})_{\mH}, 
\quad  
a_{\psi}(\psi_h, \tilde \psi_h) 
    = (\rho_0 N^2 \psi_h, \tilde \psi_h)_{L^2(\Omega)},
\\[5pt]
c(\psi_h, \tilde \varphi_h)
    &= (- \rho_0 N \psi_h \e{z}, 
    \nabla \tilde \varphi_h - N \tilde \varphi_h \e{z})_{\mH}.
\end{align}
The semi-discrete variational formulation reads then:
find 
\begin{equation} 
(\varphi_h, \psi_h) \in 
C^2 \left( [0,T]; V_h \times L_h  \right)
\end{equation} solution  to
\begin{align}
& \frac{d^2}{dt^2} m_{\varphi}(\varphi_h, \tilde \varphi_h)
+ a_{\varphi}(\varphi_h, \tilde \varphi_h) 
+ c(\psi_h, \tilde \varphi_h)
=   - (\rho_0 u_{b} /g , \tilde \varphi_h)_{L^2(\Gamma_b)}, 
\quad \forall \tilde \varphi_h \in V_h,
\quad \forall t \in [0,T],
\label{eq:Numeric:FE_phi}
\\[4pt]
& \frac{d^2}{dt^2} m_{\psi}(\psi_h, \tilde \psi_h)
+ a_{\psi}(\psi_h, \tilde \psi_h) 
+ c (\tilde \psi_h, \varphi_h) = 0, 
\quad \quad  \quad  \quad  \quad \quad  \quad  \quad  \; \;  \forall \tilde \psi_h \in L_h, 
\quad \forall t \in [0,T], 
\label{eq:Numeric:FE_psi}
\end{align}
An approximated velocity $\U_h$ is computed from the variables $(\varphi_h, \psi_h)$ using formally the relation ${\U = G^* \Phi}$. 
At the discrete level we solve
\begin{equation} \label{eq:Numeric:Phi_to_U}
    (\U_h, \V_h)_{\mH}
    =
    b_{\varphi}(\varphi_h, \V_h)
    +  b_{\psi}(\psi_h, \V_h), 
    \quad \forall \, V_h^d \in \mV_{h},
\end{equation}
where $b_{\varphi}:V_h \times V_{h}^d \to \R$ and $b_{\psi, h}:L_h \times V_{h}^d \to \R$ are the following bilinear forms,
\begin{align} 
    b_{\varphi}(\varphi_h, \V_h) =
    - (\nabla \varphi_h - \frac{N^2}{g}  \e{z} \varphi_h,  \V_h)_{\mH}, 
    \quad 
     b_{\psi, h}(\psi_h, \V_h) =
    (N \psi_h \e{z} , \V_h)_{\mH}.
\end{align}
In practice the spectral element method is used to construct $V_h$ and $L_h$ and all integrals are approximated using an adequate quadrature formula.
For the potential-based formulation, the decomposition on each basis yields
the global vectors $\Phi_h$, $\Psi_h$ and $u_{b,h}$, as well as the mass matrices 
$\mathbb M_\varphi, \mathbb M_\psi$ and $\mathbb M_b$, the stiffness matrices 
$\mathbb K_\varphi, \mathbb K_\psi$, 
and the interaction matrix 
$\mathbb C_{\psi,\varphi}$. 
We also obtain the matrices $\mathbb B_\varphi, \mathbb B_\psi$ used to deduce the velocity from the potential, see Equation \eqref{eq:Numeric:Phi_to_U}.
The semi-discrete system corresponding to the finite-element approximation \eqref{eq:Numeric:FE_phi}-\eqref{eq:Numeric:Phi_to_U} reads
\begin{align} 
& \frac{d^2}{dt^2} \mathbb M_\varphi \Phi_h
+ \mathbb K_\varphi \, \Phi_h
+ \mathbb C_{\psi \varphi} \, \Psi_h
= - \mathbb M_b \, U_{b,h} , \label{eq:Numeric:Matrix_phi}
\\
& \frac{d^2}{dt^2} \mathbb M_\psi \Psi_h
+ \mathbb K_\psi \, \Psi_h
+ \mathbb C_{\psi \varphi}^T \, \Phi_h 
= 0, 
\label{eq:Numeric:Matrix_psi}
\\[4pt]
& \mathbb M_\U \, U_h
=
\mathbb B_\varphi \Phi_h
+ \mathbb B_\psi \Psi_h.
\end{align}
By using the Gauss-Lobatto quadrature rule for the integral approximations, we obtain that the mass matrices $ \mathbb M_\varphi, \mathbb M_\psi$ and $\mathbb M_b$ are diagonal. 
Again, the time interval $[0,T]$ is partitioned into $M$ equal intervals of length 
$\Delta t = T/M$ and for the potential-based formulation, we consider the sequences $\{ \Phi_h^n  \in V_{h} \}_{n \in \{1,... M \}}$ and 
$\{ \Psi_h^n  \in L_{h} \}_{n \in \{1,... M \}}$ solution to
\begin{align}
\mathbb M_\varphi 
& \frac{\Phi_h^{n+1} - 2 \Phi_h^n + \Phi_h^{n-1}}{\Delta t^2}
+ \mathbb K_\varphi \, \Phi_h^n
+ \mathbb C_{\varphi \psi} \, \Psi_h^n 
=  - \mathbb M_b \, U_{b,h}(n \Delta t), \label{eq:Num:P_Leapfrog_1}
\\[4pt]
& \mathbb M_\psi 
\frac{\Psi_h^{n+1} - 2 \Psi_h^n + \Psi_h^{n-1}}{\Delta t^2}
+ \mathbb K_\psi \, \Psi_h^n
+ \mathbb C_{\varphi \psi}^t \, \Phi_h^n
=  0,
\end{align}
and the velocity is then computed with 
\begin{equation} \label{eq:Num:P_Leapfrog_3}
    \mathbb M_\U \, U_h^n
    =
    \mathbb B_\varphi \Phi_h^n
    + \mathbb B_\psi \psi_h^n.
\end{equation}
In the following, the displacement will be sometimes used. The displacement is computed by integrating in time the velocity.

\subsection{First simulation: waves generation from a submarine earthquake}
In order to validate and compare the formulations \eqref{eq:Num:U_Leapfrog} and 
\eqref{eq:Num:P_Leapfrog_1}-\eqref{eq:Num:P_Leapfrog_3}, we reproduce the test case presented in \cite{sammarco_depth-integrated_2013}.
A tsunami and hydro-acoustic waves are generated by a seabed elevation due to a submarine earthquake, in a 2D domain on a flat topography. 
In \cite{sammarco_depth-integrated_2013}, the flow is assumed irrotational with a velocity potential $\phi$. 
The distance between the seabed and the mean water level is denoted $h(x,y,t)$.
The model, solved with a finite elements method, reads 
\[
\frac{\partial^2 \phi}{\partial t^2} - c^2 \Delta \phi = 0, 
\]
where $c=1500$~ms$^{-1}$ is constant, and with boundary conditions (with $g=9.81$~ms$^{-2}$)
\[
\frac{\partial^2 \phi}{\partial t^2} 
+ g \frac{\partial \phi}{\partial z} 
= 0, \text{ at } z=h(x,y,t), \quad
%%%
\nabla \phi \cdot \normal
= \frac{\partial h}{\partial t}, \text{ at } z=0.
\]

\begin{figure}
    \centering
    \includegraphics[width=\textwidth]{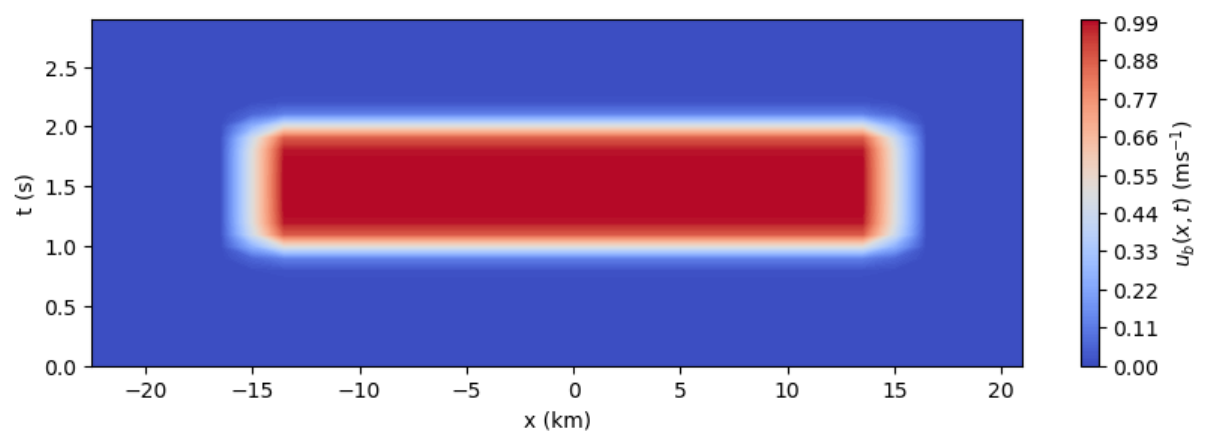}
    \caption{The function $u_b(x,t)$ describing the earthquake used in the first simulation to generate acoustic waves and tsunamis. 
    % COMMENTAIRE {\bf détailler un peu la légende dire que c'est la source utilisée pour le earthquake}.
    }
    \label{fig:Simu1:Source}
\end{figure}

In the simulations, we use the following source: 
$u_b(x,t) = f(x) g(t)$, where $f$ and $g$ are smoothed rectangular functions,
\begin{equation}
f(x) = 
\frac{1}{1+e^{- s_x (x - r_x/2)}}
- \frac{1}{1+e^{- s_x (x + r_x/2)}},
\quad
g(t) = \frac{1}{1+e^{- s_t (t-t_0)}}
- \frac{1}{1+e^{- s_t (t - t_0 - r_t)}}.
\end{equation}
see Figure \ref{fig:Simu1:Source}.
Since no conditions are imposed on the lateral boundaries, the computational domain is wider to avoid reflections inside the domain of interest.
The numerical values are given in Table \ref{tab:Simu1:Values}.

\begin{table}[]
    \centering
    \begin{tabular}{c | l }
        Parameter  & Value       \\
        \hline
        Domain height & 1.5 km  \\
        $s_x$ & 150 m$^{-1}$    \\
        $r_x$ & 30 km           \\
        $t_0$ & 2 s             \\  
        $s_t$ & 4 s$^{-2}$      \\ 
        $r_t$ & 1 s             \\
    \end{tabular}
    \caption{Parameter values for the first simulation: hydro-acoustic waves and tsunami generation by an earthquake.  
    % COMMENTAIRE {\bf détailler un peu la légende dire que c'est la simulation de tsunami + hydro-acoustic waves generated by an eathquake.}
    }
    \label{tab:Simu1:Values}
\end{table}

%\begin{table}[]
%    \centering
%    \begin{tabular}{|c|c|c|c|c|c|c|c|c|} \hline
%         Lz & $N$ & $c_0$ & $\rho_0$ & $x_0$ & $s_x$ & $r_x$ & $t_0$ & $s_t$ 
%         \\ \hline
%         1.5 km & 0.001 s$^{-1}$ & 1500 ms$^{-1}$ &1000 kg m$^{-3}$ & 7.5 km & 150 m$^{-1}$ & 1 km & 2 s & 4 s$^{-2}$
%         \\\hline
%    \end{tabular}
%    \caption{Numerical values for the first simulation.}
%    \label{tab:Simu1:Values}
%\end{table}

\subsubsection{Convergence analysis}
For each formulation, several elements orders are tested. 
The aim is to show that the simulations converge and to select parameters that are precise enough and not too costly.
The tsunami is simulated for 50 s and the computational domain is 101 km long.
To evaluate whether convergence is reached, we plot the time evolution of the free surface displacement at a point $x = 50$~km away from the source center.
The convergence is estimated visually and is assumed reached when increasing the order does not change the curves.
The finite element orders are denoted \verb|Px|, \verb|Pz| and the number of mesh subdivision in the $x$ and in the $z$ coordinates are respectively denoted \verb|Nx|, \verb|Nz|. 
Several combinations of \verb|Px| and \verb|Pz| are tested while keeping \verb|Nx| and \verb|Nz| fixed. 
The total number of Degrees of Freedom (NDoFs) for each case is indicated in each surface displacement plot. 

For the velocity-field scheme, the surface displacement is shown in Figure \ref{fig:Convergence_Primal}. 
The convergence is reached for \verb|Px|~=~4, \verb|Pz|~=~5.
%COMMENTAIRE {\bf il faut dire comment on calcule l'erreur et quel est le seuil qu'on se fixe pour dire qu'il y a convergence}.
For the potential-based scheme, the surface displacement is shown in Figure~\ref{fig:Convergence_Dual1} and we see that the orders with \verb|Px|~$\geq$~3 converge.
However, when taking  \verb|Px|~$=$~3, \verb|Pz|~$=$~3, we have observed increasing oscillations for longer times (not shown here).
These oscillations come from the seabed displacement at $x=0$ km, as shown in Figure~\ref{fig:Convergence_Dual2}. 
The convergence is actually reached for \verb|Px|~=~3, \verb|Pz|~=~5.
Such oscillations are not observed for the velocity-field formulation.
%, since the seabed displacement is explicitly imposed.

\begin{figure}
\centering
	\includegraphics[width=\textwidth]{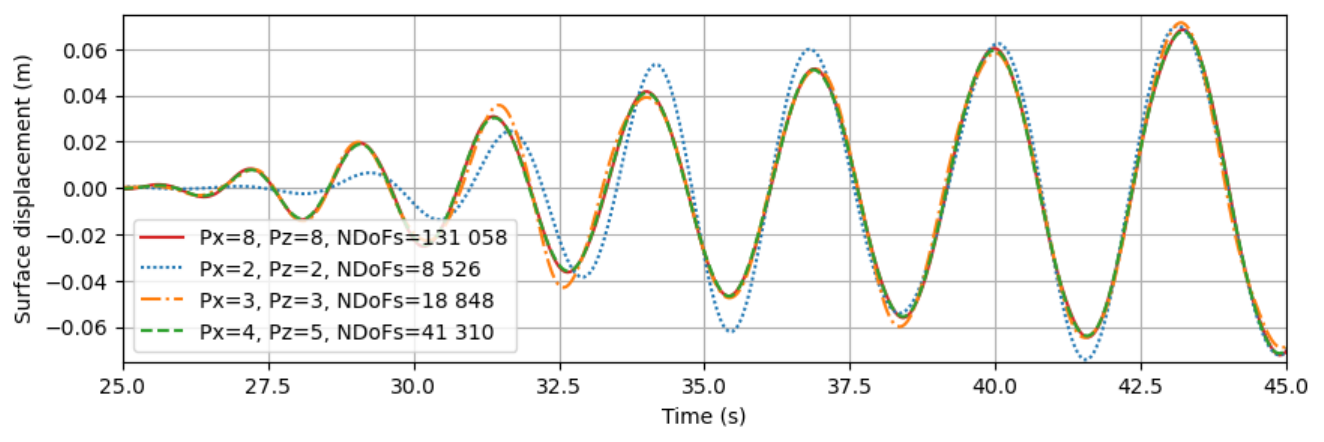}
	\caption{Surface vertical displacement at $x=50$ km for the velocity formulation at different elements orders.}
\label{fig:Convergence_Primal}
\end{figure} 

\begin{figure}
\centering
	\begin{subfigure}{\textwidth}
	\includegraphics[width=\textwidth]{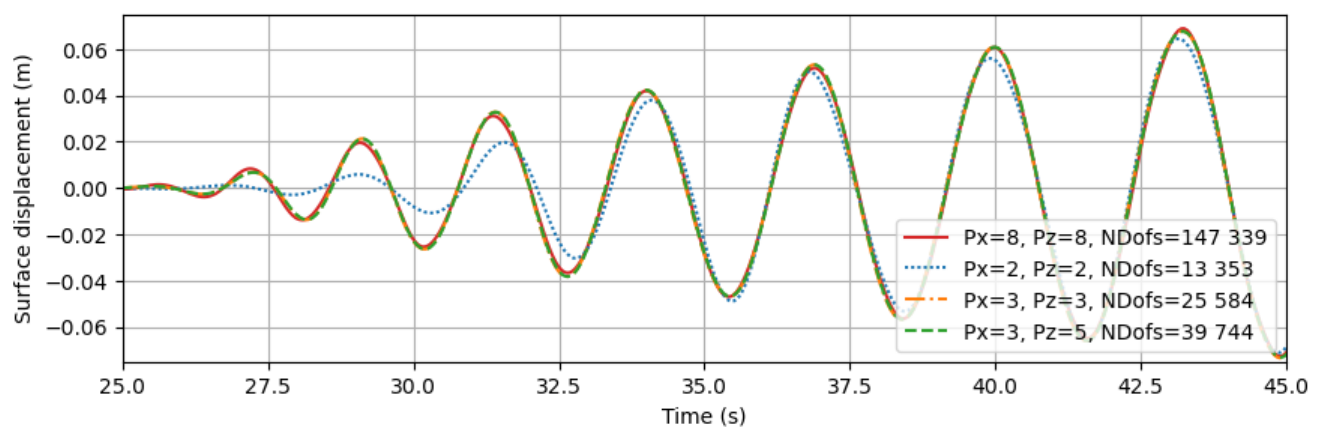}
	\caption{Surface vertical displacement at $x=50$ km.}
\label{fig:Convergence_Dual1}
\end{subfigure} 
%%%
%%%
\begin{subfigure}{\textwidth}
	\includegraphics[width=\textwidth]{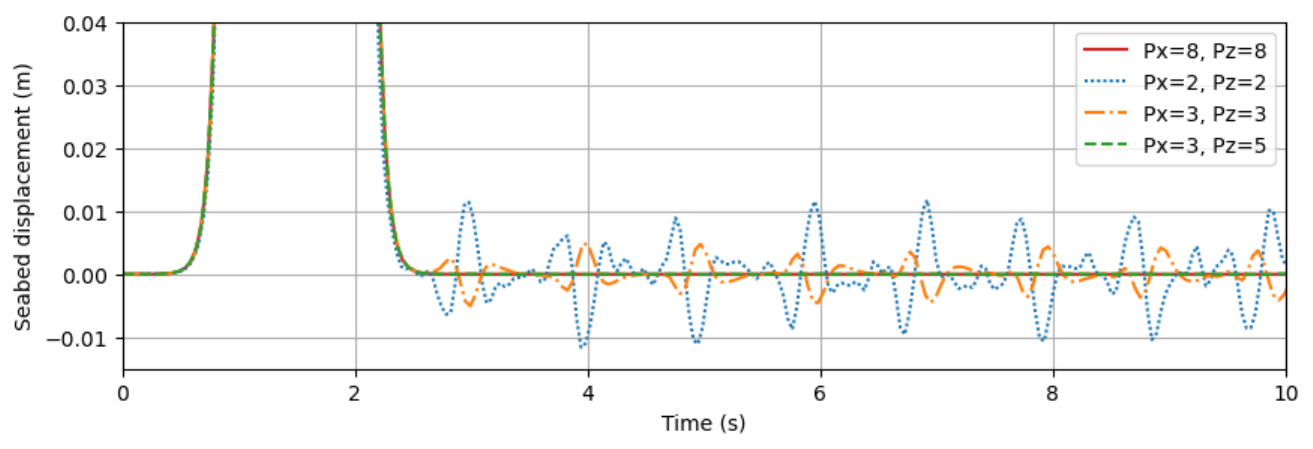}
	\caption{Seabed vertical displacement at $x$=0 km.}
\label{fig:Convergence_Dual2}
\end{subfigure}
\caption{Displacement of (a) the water free surface at $x=50$ km, (b) the seabed at $x=0$ km, obtained from the potential-based formulation for several element orders.}
\end{figure}

\subsubsection{Comparison and snapshots}
We now run the simulation for a longer time and compare it with the results from \cite{sammarco_depth-integrated_2013}. 
The simulation obtained with the velocity-field formulation and the potential-based formulations are first compared with \cite{sammarco_depth-integrated_2013}, then compared to each other. 
Finally we present some snapshots of the simulation. 

\begin{table}[]
    \centering
    \begin{tabular}{|c|c|c|c|c|c|}
        \hline
         Nx   & Nz & Px & Pz & NDoFs    & Computational time  \\ \hline
         1051 & 10 & 4  & 5  & 428 910 & 771 s \\ \hline
    \end{tabular}
    \caption{Parameters values for the velocity-field simulation.}
    \label{tab:Simu1:ParamVelocity}
\end{table}
\begin{table}[]
    \centering
    \begin{tabular}{|c|c|c|c|c|c|}
        \hline
         Nx   & Nz & Px & Pz & DoFs  & Computational time  \\ \hline
         1051 & 10 & 3 & 5 & 413 094 & 1197 s \\ \hline 
    \end{tabular}
    \caption{Parameters values for the potential-based simulation.}
    \label{tab:Simu1:ParamPotential}
\end{table}

The tsunami is simulated for 1000 s, and the computational domain is 1576.5 km long. 
The element orders, the DoFs and the  computational time are given in Table \ref{tab:Simu1:ParamVelocity} and \ref{tab:Simu1:ParamPotential} respectively for the velocity-field and potential-based formulations.
The potential-based formulation requires more computational time than the velocity-field formulation.
This additional time is due to the displacement calculation: the displacement is obtained by integrating the velocity in time, and to obtain a precise integration, the velocity is computed at each time step.  
Hence the system \eqref{eq:Num:P_Leapfrog_3} must be solved at each time step, leading to an additional computational cost. 
The code could be accelerated by using an efficient time integration rule which would not require to compute the velocity at each time step, or by directly solving the potential-based problem associated to the displacement.  \\

For comparison with the literature, we consider the surface vertical displacement at a point $x=50$~km evolving with time.
The vertical displacement computed with the velocity-field model is superimposed to the result from \cite{sammarco_depth-integrated_2013} in Figure \ref{fig:Comparison:Sammarco}. 
Both curves have a high-frequency component starting at time $t \sim 20$~s, namely the acoustic waves, and a low-frequency wave starting at $t \sim 300$~s
% COMMENTAIRE {\bf IL ME SEMBLE QUE C'EST PLUTOT APRES 300 s ? Décrire un peu les différences, avec un déphasage, et des amplitudes différentes de combien de \% ?, emmettre une hypothèse pour expliquer à quoi cela peut être du ?}
, which is the tsunami. 
The tsunami arrival times are in good agreement between the two curves. 
It should be noted that since we do not have the exact description of the boundary condition used in \cite{sammarco_depth-integrated_2013}, the earthquake description slightly differs between the two simulations. This could explain the phase difference between both displacements. 
\\

To compare the potential-based and the velocity-field formulations, we plot in Figure~\ref{fig:Comparison} the same vertical displacement obtained from both formulations.
The orange curve is the displacement $d_\Phi$ computed from the potential formulation, and the blue curve is the displacement $d_\U$ computed from the velocity formulation. 
The curves are almost superimposed, which illustrates the fact that both formulations are equivalent.
%%%%%%%%
Figure \ref{fig:Error} shows the error $|d_\Phi - d_\U|$ between the potential-based and the velocity-field formulations. The error has a magnitude of less than 5 percents, and it oscillates with a period of approximately 4 seconds. The error averaged over a period is plotted in the same figure. We see that the averaged error slowly increases, which could be caused by the time integration: small errors add up and become non-negligible for large times. \\

Finally, to give some new illustration of the wave propagation, we show in Figure \ref{fig:Snapshots_Dual} snapshots of the Eulerian domain near the earthquake at several times. The Eulerian domain is obtained by deforming the domain at rest with the displacement vector field. The colors correspond to the values of the vertical displacement. On each snapshot, we see the permanent deformation of the seabed over $[-10 ;10]$~km caused by the earthquake. 
The surface deformation is due to the hydro-acoustic waves propagating for early times ($t=49$~s), then in the next two snapshots ($t=147$~s and $t=343$~s), we see the tsunami propagating in both directions. In the last snapshot ($t=833$~s), the tsunami 
is away from the considered domain and it remains only acoustic waves.
\begin{figure}
\centering
	\includegraphics[width=\textwidth]{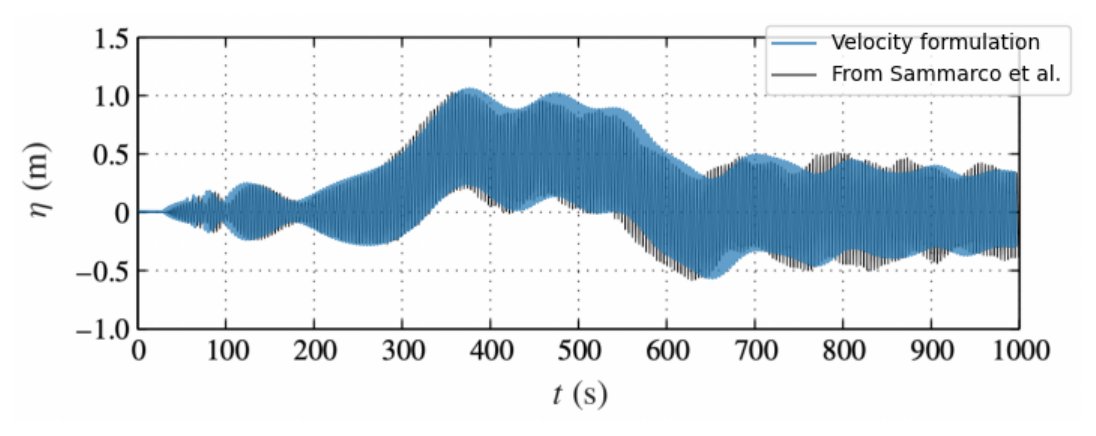}
	\caption{Vertical displacement at $x=$50 km, computed from the velocity-field (blue) superimposed to the result in \cite{sammarco_depth-integrated_2013}.}
\label{fig:Comparison:Sammarco}
\end{figure}
\begin{figure}
\centering
\begin{subfigure}{0.9\textwidth}
    \includegraphics[width=\textwidth]{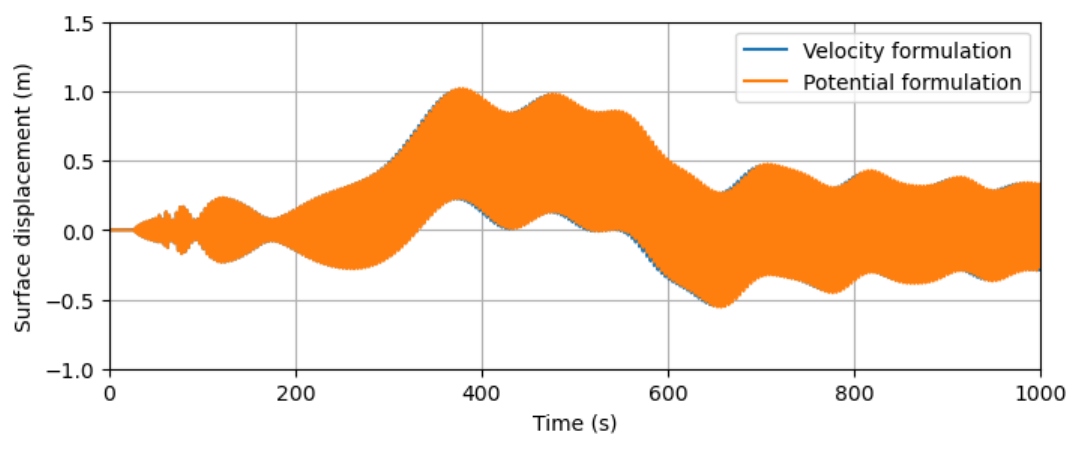}
    \caption{Vertical displacement at $x=$50 km obtained with the velocity-field (blue) and the potential-based (orange) formulations.}
    \label{fig:Comparison}
\end{subfigure}
%%%%
%%%%
\begin{subfigure}{0.9\textwidth}
    \includegraphics[width=\textwidth]{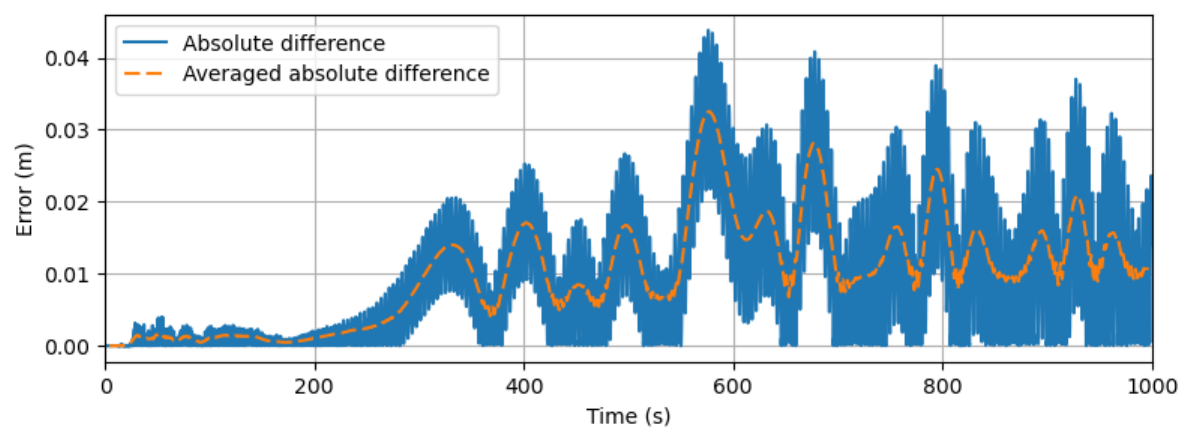}
    \caption{Absolute difference (blue) and averaged absolute difference (orange) from Figure (a).}
    \label{fig:Error}
\end{subfigure}
\caption{Comparison of the velocity-field and the potential-based formulations: (a) superposition of the displacements, (b) absolute error.}
\end{figure} 

\begin{figure}
\centering
	\includegraphics[width=\textwidth]{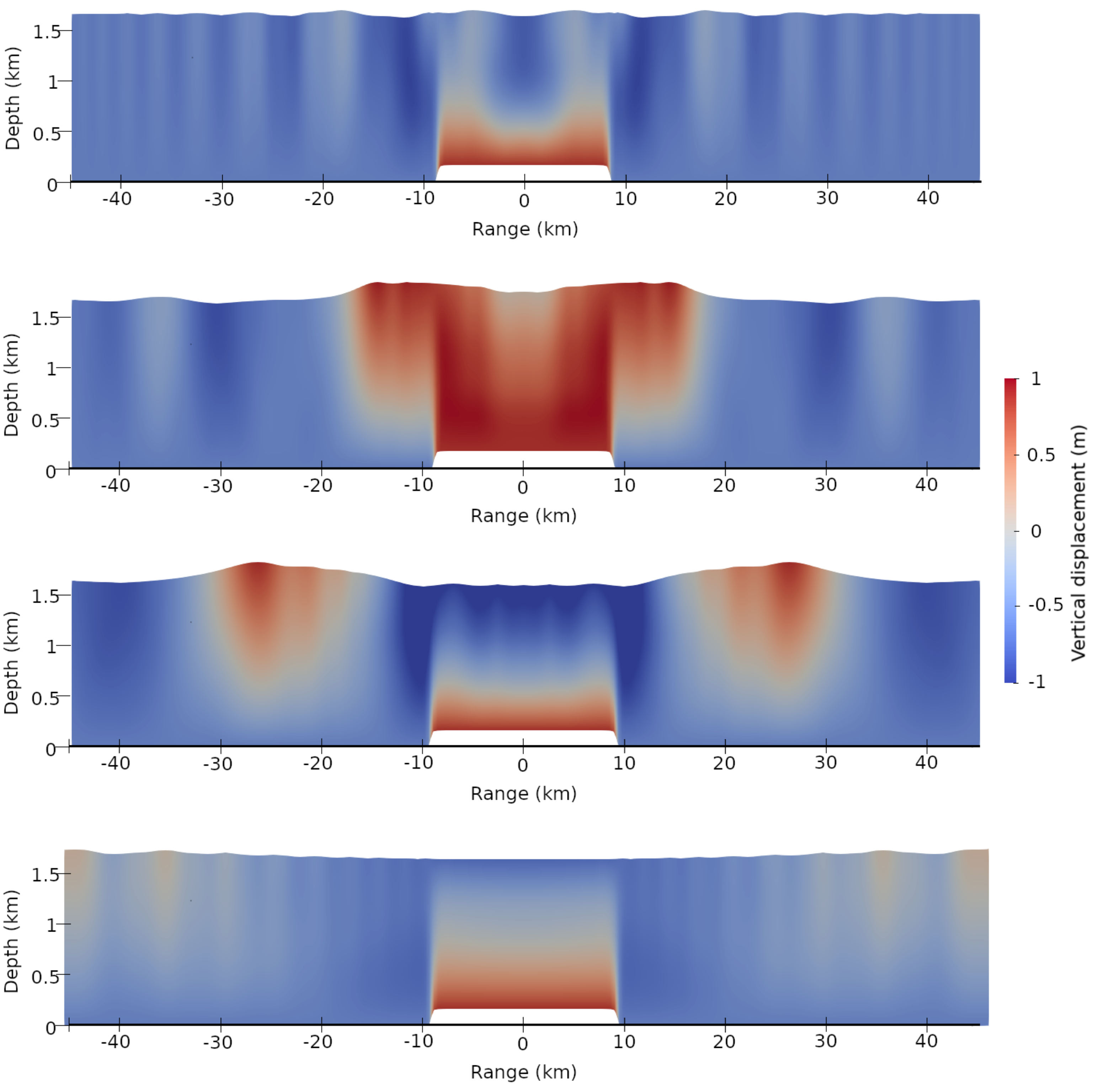}
	\caption{
	Snapshots of the water displacement obtained with the potential formulation in the first simulation. The acoustic waves and the tsunami generated by a submarine earthquake induce a water displacement.
	The snapshots are taken at the following times, from top to bottom: $t=$~49.0~s, $t=$~147.1~s,  $t=$~343.3~s, $t=$~833.8~s.
	%COMMENTAIRE {\bf décrire un peu le cas : tsunami + waves generated by earthquake. Ca serait bien de mettre aussi un spectrogramme sur un capteur en surface et un au fond. Ca illustrera notamment la différence avec le glissement de terrain plus loin.}
	}
\label{fig:Snapshots_Dual}
\end{figure}

%%%%%%%%%%%%%%%%%    SUBSECTION    %%%%%%%%%%%%%%%%%%%%%%%%%%%
\subsection{Second simulation: rotational component of the fluid velocity}
In this part, we study the validity of the traditional assumption that the velocity flow is irrotational. This assumption is made for example in \cite{longuet-higgins_theory_1950, sammarco_depth-integrated_2013, eyov_progressive_2013}. 
The generalized potential yields a decomposition of the fluid velocity into an main irrotational component $\nabla \varphi$ and a remainder $\U_r$, 
\begin{equation} \label{eq:Numerics:Decomposition}
\U = - \nabla \varphi + \U_r, 
\quad \U_r = U_r \, \e{z} =  N \left( \psi + \frac{N}{g} \varphi \right) \e{z}.    
\end{equation}
We recall that $N = \sqrt{- g \rho_0'(z)/\rho_0(z) - g^2/c_0^2(z)}$ is the buoyancy frequency \cite{dubois_acoustic_2023, gill_atmosphere-ocean_1982}.
In the case $N \equiv 0$, the decomposition \eqref{eq:Numerics:Decomposition} shows that the flow is irrotational and the potential-based formulation \eqref{eq:P:PDE_Vol}-\eqref{eq:P:PDE_Surf} simplifies to the classical model \eqref{eq:Intro:LH1}-\eqref{eq:Intro:LH2}.
%This was also shown in \cite{dubois_acoustic_2023} from the velocity-field formulation. 
%Since a novelty of the proposed model is to account for the case $N \neq 0$, we quantify here the rotational component of the velocity. 
Hence the potential-based formulation can be seen as a generalization of the system \eqref{eq:Intro:LH1}-\eqref{eq:Intro:LH2} to the case of a non-irrotational velocity. 
In this section, we use the potential-based formulation to quantify the contribution of the remainder $\U_r$ when the velocity is not strictly irrotational.

\begin{rmrk}
The remainder $\U_r$ defined in Equation \eqref{eq:Numerics:Decomposition} is not exactly the rotational part of the velocity as it cannot be written as $\U_r = \nabla \times \bm \psi_r$. 
% COMMENTAIRE {\bf dire pourquoi, ca semble bizarre ? Peut-être est-ce plus haut dans le texte, dans ce cas y référer}
\end{rmrk}

\label{sec:Simu2}
\begin{table}[]
    \centering
    \begin{tabular}{ c | l  || c  | l } 
        Parameter & Value  & Parameter & Value      \\
        \hline
        Domain width        & 15 km     & $a$       &  150 m        \\
        Domain height       & 1.5 km    & $x_0$     & 7.5 km        \\
        \verb|Px|           & 8         & $s_x$     & 4 10$^{-5}$ m$^{-2}$   \\
        \verb|Pz|           & 8         & $t_0$     & 2 s           \\
        \verb|Nx|           & 100       & $s_t$     & 4 s$^{-2}$    \\
        \verb|Nz|           & 10        & $f_x$     & 0.07 m$^{-1}$ \\           
        PML thickness       & 1.5 km    &   $k_x$   & 0.03 m$^{-1}$ \\
        $N_\text{const}$    & 0.001 s$^{-1}$ & $r_x$& 1.5 km        \\
                            &           & $b$       & 300 m         \\
    \end{tabular}
    \caption{Parameter values used in the second set of simulations for various earthquake scenarios: with and without topography, and using two different profiles for $N$. 
    %COMMENTAIRE {\bf décrire un peu la simulation}
    }
    \label{tab:Simu2:Values}
\end{table}

We consider a scenario where one portion of the seafloor goes up and another portion goes down. 
This scenario is obtained with a source of the form $u_b(x,t) = f(x) \, g(t)$ where  
$f(x)$ is the derivative of a Gaussian and $g(t)$ is a Ricker function, 
\begin{align}
f(x) 
= - 2 a\, s_x (x-x_0) e^{- (x-x_0)^2 s_x },
\quad 
g(t) 
= (4 s_t^2 (t-t_0)^2 - 2s_t) e^{-(t-t_0)^2 s_t},
\end{align}
see Figure \ref{fig:Simu2:Source}.
\begin{figure}
    \centering
    \includegraphics[width=\textwidth]{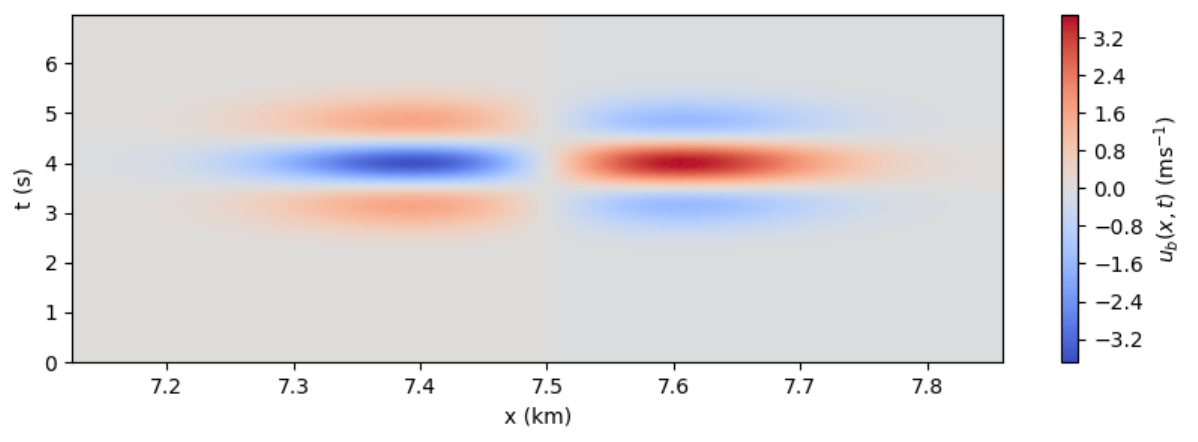}
    \caption{The function $u_b(x,t)$ describing the seabed displacement for the second simulation. The same function $u_b$ is used for every earthquake scenario in this section.}
    \label{fig:Simu2:Source}
\end{figure}
The potential-based simulation is used, and to avoid non-physical reflections, Perfectly Matched Layers (PML) were implemented.
Two cases are tested: (1) with an initially flat seabed and with a constant buoyancy frequency $N_\text{const}$, 
(2) with an initial topography $z_b(x)$ and a depth-dependant $N(z)$.
The topography $z_b$ consists of several bumps and flattens in the PML domain, 
    %//return 0.25 * CInfty0_1D(x, 1.4, 1, 100);
    %Real freq = 100;
    %Real center = 0.5;
    %Real width = 0.6; 
    %Real freqSin = 50; 
    %return 0.2 * DoubleSigmoid(x, freq, center - %width /2, width) * (1+sin(x*freqSin));
\begin{equation} \label{eq:Simu2:Topo}
z_b(x) = b \, (1 + \sin(k_x x)) \left( \frac{1}{1+e^{- f_x (x - r_x/2)}}
- \frac{1}{1+e^{- f_x (x + r_x/2)}}\right).
\end{equation}
The profile $N(z)$ is obtained for profiles $c_0(z), \rho_0(z)$ typically found in the ocean and described in Appendix \ref{sec:A:Buoyancy}. 
The simulation parameters are described in Table \ref{tab:Simu2:Values}.
For each case, we show a snapshot at time $t=8$~s of several quantities: the irrotational component magnitude $| \nabla \varphi |$, the remainder $U_r$ and the ratio $| U_r |/ |\U |$. 
Since all quantities fluctuate in time, we also show the time averaging of  $| U_r |/ |\U |$ on each DoFs over the whole simulated time. 
\\

\begin{figure}
	\centering
	\begin{subfigure}{\textwidth}
	\includegraphics[width=\textwidth]{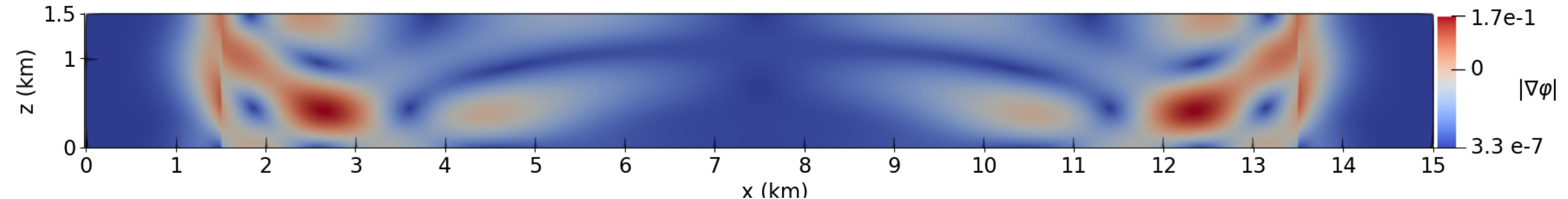}
	\caption{Main irrotational component magnitude $|\nabla \varphi|$}
	\label{fig:NConst_Irrotational}
	\end{subfigure}
	%%%%%%	
	%%%%%%
	\begin{subfigure}{\textwidth}
    \includegraphics[width=\textwidth]{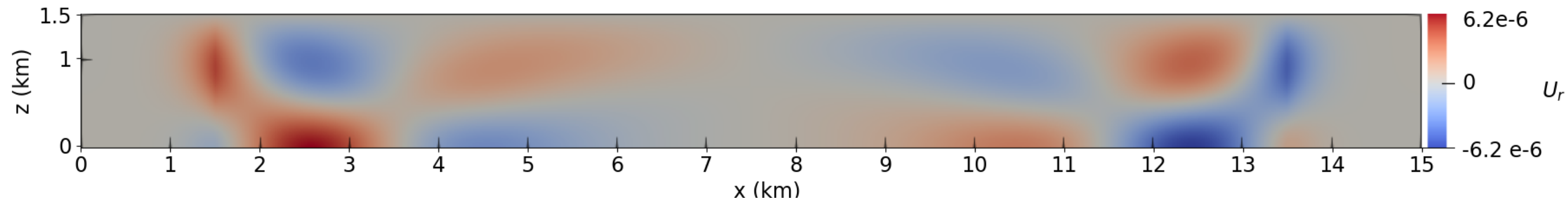}
     \caption{Remainder $U_r$}
	\label{fig:NConst_Remainder}
	\end{subfigure}	
	%%%%%%	
	%%%%%%
	\begin{subfigure}{\textwidth}
	\includegraphics[width=\textwidth]{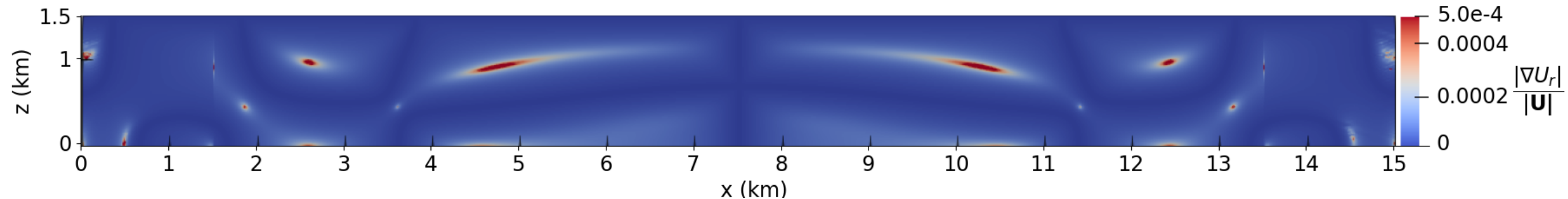}
	\caption{ relative magnitude $|U_r|/| {\bf U} |$}
	\label{fig:NConst_Ratio}
	\end{subfigure}
	%%%%%%
	%%%%%%
	\begin{subfigure}{\textwidth}
    \includegraphics[width=\textwidth]{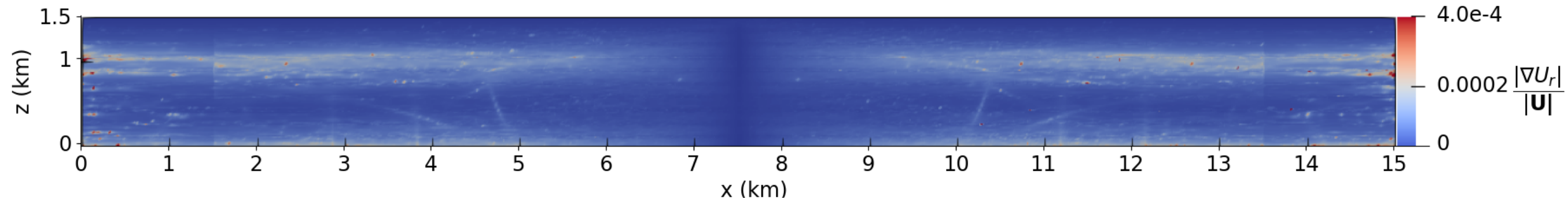}
     \caption{Time averaging of  $|U_r|/| {\bf U} |$}
     \label{fig:NConst_RatioAvg}
	\end{subfigure}
	\caption{Earthquake on a flat seabed and with a constant $N\equiv 0.001$ s$^{-1}$.
	Snapshots at $t=8s$ of: (A) the irrotational component magnitude $|\nabla \varphi|$, (B) the remainder $U_r$ and (C) the relative magnitude between the remainder and the total velocity $|U_r|/|\U|$. The time averaging over 120 s of $|U_r|/|\U|$ in shown in (D).
	%COMMENTAIRE {\bf j'aurai aussi montré le max plutot que le mean pour voir l'erreur max qu'on peut faire? IL FAUT grossir les ratio sur l'axe vertical et plus généralement les légendes de l'axe vertical sur ces sous-figures. Mêmes remarques pour la figure d'après (figure 10)}
	}
	\label{fig:Simu2:NConst}
\end{figure}
\begin{figure}
	\centering
	\begin{subfigure}{\textwidth}
	\includegraphics[width=\textwidth]{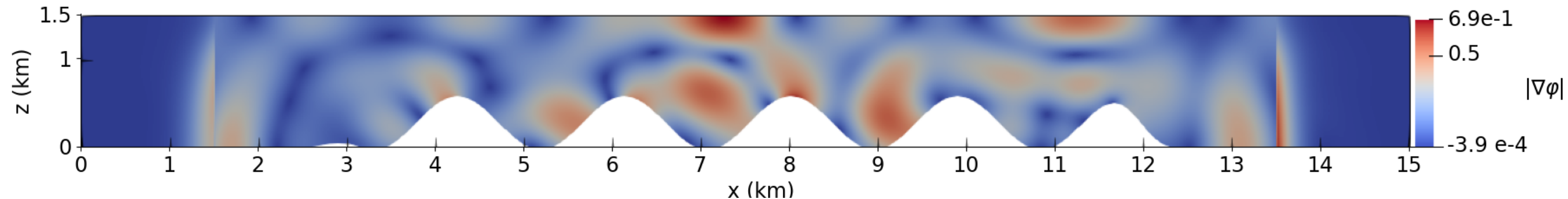}
	\caption{Main irrotational component magnitude $|\nabla \varphi|$}
	\end{subfigure}
	%%%%%%	
	%%%%%%
	\begin{subfigure}{\textwidth}
    \includegraphics[width=\textwidth]{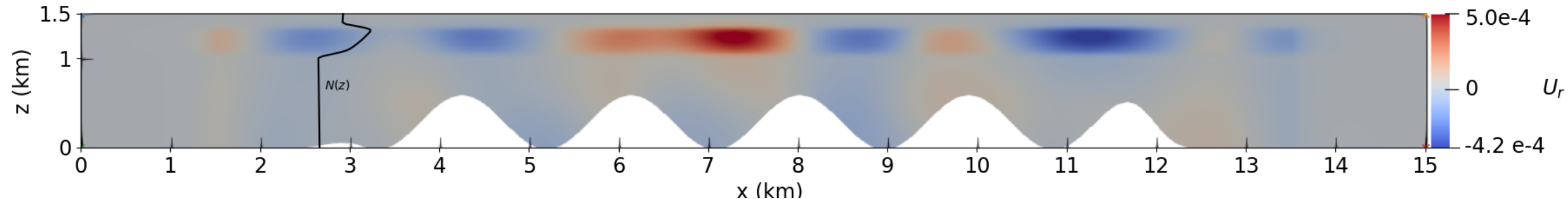}
     \caption{Remainder $U_r$}
     \label{fig:NEOS_Remainder}
	\end{subfigure}
	%%%%%%	
	%%%%%%
	\begin{subfigure}{\textwidth}
    \includegraphics[width=\textwidth]{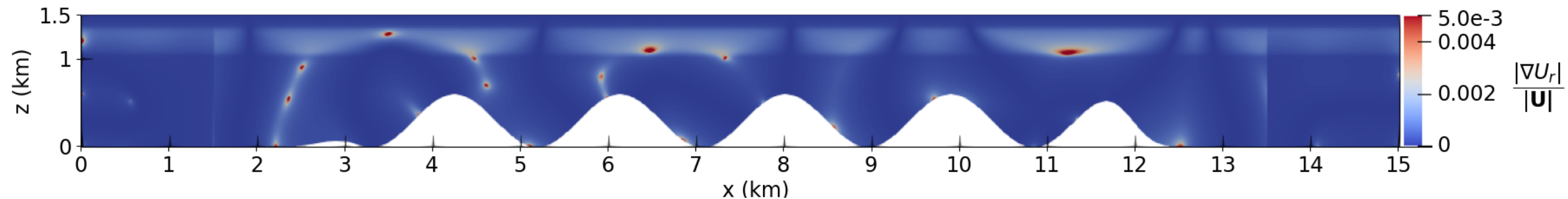}
     \caption{Ratio $|U_r|/| {\bf U} |$}
     \label{fig:NEOS_Ratio}
	\end{subfigure}
	%%%%%%
	%%%%%%
	\begin{subfigure}{\textwidth}
    \includegraphics[width=\textwidth]{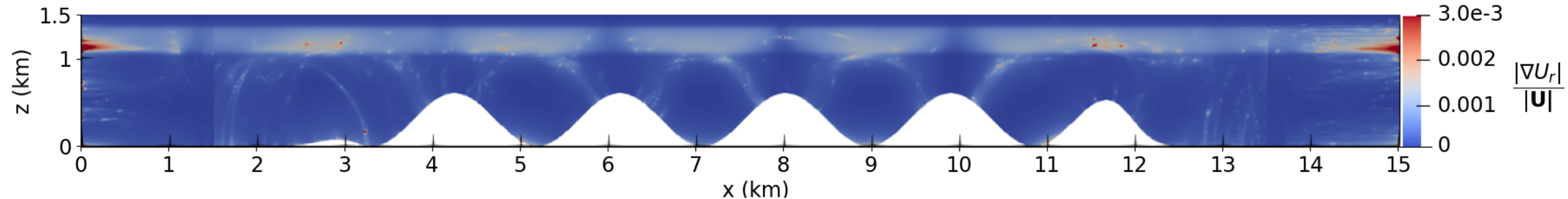}
     \caption{Time averaging of  $|U_r|/| {\bf U} |$}
     \label{fig:NEOS_Ratio_Avg}
	\end{subfigure}
	\caption{Earthquake on a topography and with a depth-dependent $N(z)$.
	Snapshots at $t=8s$ of: (A) the irrotational component magnitude $|\nabla \varphi|$, (B) the remainder $U_r$ and (C) the relative magnitude between the remainder and the total velocity $|U_r|/|\U|$. The time averaging over 120 s of $|U_r|/|\U|$ in shown in (D). The profile $N(z)$ (black curve) is shown on (B).}
	\label{fig:Simu2:NEOS}
\end{figure}

The case $N \equiv N_\text{const}$ is shown in Figure \ref{fig:Simu2:NConst}. 
From the time averaging (Figure \ref{fig:NConst_RatioAvg}), we see that the remainder is at least four orders of magnitude smaller than the main irrotational component, but that the ratio is not homogeneous in space. 
Its extremal values are on the seabed, on the upper half of the domain and away from the source location. \\

The case $N(z)$ is shown in Figure \ref{fig:Simu2:NEOS}. 
Note that in this case, the profiles $c_0(z)$ and $\rho_0(z)$ also vary with depth, see Appendix \ref{sec:A:Buoyancy}.
For reference, the profile $N(z)$ is plotted in Figure \ref{fig:NEOS_Remainder}. 
As before, the rotational component is much smaller than the irrotational component. 
The more complex profile $N(z)$ and the topography have an impact on the distribution of $U_r$. 
Indeed, the extreme values of $U_r$ are concentrated around the location of maximal $N(z)$ and on the off-peak parts of the seabed. 
We also notice that for both cases (Figures \ref{fig:Simu2:NConst} and \ref{fig:Simu2:NEOS}) the rotational component vanishes near the surface.  
Hence the irrotationality assumption seems verified for flows near the surface even when the inner flow is not strictly irrotational.
%Moreover, the difference in magnitude of both components for the velocity confirms that the common assumption of an irrotational flow is reasonable. 
\\

For the simulations presented here, the source time function $g(t)$ is a Ricker function, so that the time average of the source is zero. 
Other simulations made with a source having a non-zero time average show tthat the remainder $\U_r$ and the ratio $|U_r|/|\U|$ have their maximal values near the source location (see Appendix \ref{sec:A:Irrotational}).
Moreover, both quantities increase with time so that for long times ($t>100$~s)  and close to the source, the ratio $|U_r|/|\U|$ is of the same magnitude order as the irrotational component $\nabla \varphi$, even for small values of $N$.
Hence the approximation $\U = \nabla \varphi$ is not uniform in space and time and can become invalid near the source for particular choices of source functions.
\\

\begin{figure}
	\centering
    \includegraphics[width=\textwidth]{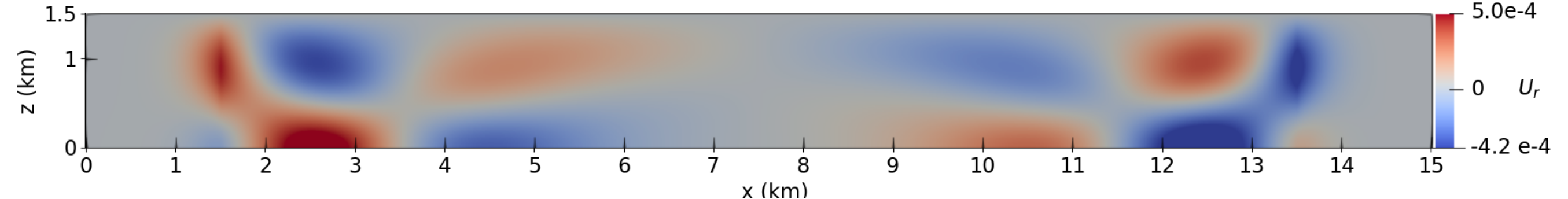}
	\caption{Earthquake on a flat seabed and with a constant $N\equiv 0.01$ s$^{-1}$:
	snapshots at $t=8s$ of the remainder $U_r$ }
	\label{fig:Simu2:NConst2}
\end{figure}
Finally, to evaluate how the values of the buoyancy frequency affect the remainder $U_r$, we run the same scenario with $N \equiv 10 \, N_\text{const}$. A snapshot at time $t=8$~s of the remainder is shown in Figure \ref{fig:Simu2:NConst2}. 
We see that $\U_r$ is distributed very similarly to the case $N \equiv N_\text{const}$, but is 100 times larger (compare to Figure \ref{fig:NConst_Remainder}). 
It seems that $\U_r$ scales approximately as $N^2$. 
From the equation \eqref{eq:Numerics:Decomposition} defining $\U_r$, this observation could indicate that the component $N^2/g \, \varphi$ is dominant over the component $N \psi$.

%%%%%%%%%%%%%%%%%    SUBSECTION    %%%%%%%%%%%%%%%%%%%%%%%%%%%

\subsection{Third simulation: interference patterns for submarine landslides}
In this third scenario, we use the model to produce new simulations of waves generated by an submarine landslide.
Many studies have been conducted on the generation of hydro-acoustic waves by submarine earthquakes (see \cite{gomez_near_2021, auclair_theory_2021, cecioni_tsunami_2014} and the references therein). 
However, wave generation from submarine landslides have received less attention. 
In \cite{caplan-auerbach_hydroacoustic_2014}, it is highlighted that the hydro-acoustic signals generated by submarine landslides have a characteristic interference pattern. 
This pattern could be then used to detect and characterize landslides. 

We start by a brief explanation about the interference pattern,  
then describe a simulation that reproduce the interference in a simplified case. 

\subsubsection{The Lloyd mirror effect}
When acoustic waves propagate in a bounded medium, the reflection on the boundary create interference. This effect is also called the Lloyd mirror effect \cite{jensen_computational_2011}. 
We consider a harmonic point source emitting rays in all directions, and neglect the reflection on the bottom.
Every point in the domain is connected to the source by two rays: the direct ray, and the ray reflected by the surface (see Figure  \ref{fig:LLoyd}). 
For points far away from the source, the pressure field is approximated by
\begin{equation} \label{eq:Simu3:Approx}
p(x,z,t) \sim - A \frac{2i}{x} \sin(k z_s \sin \theta) e^{ikx} e^{-i \omega t},
\quad \text{ for } x\gg z_s,
\end{equation}
where $A$  and $z_s$ are respectively the source magnitude and depth, $k$ is the wavenumber, $\theta$ is the declination angle, $\omega$ is the angular frequency.  
The pressure minima $|p_{\text{max}}| = 0$ are reached for 
\begin{equation}\label{eq:Simu3:InterMin}
\sin \theta = (m-1) \frac{\pi}{k z_s}, 
\quad \text{ with } m \in \mathbb N^*.
\end{equation}
Thanks to the relation \eqref{eq:Simu3:InterMin}, each point in space is associated to a ``frequency bandwidth", namely the distance between two frequencies corresponding to minimum pressure.
For a point with declination angle $\theta$, the associated bandwidth $\Delta f$ is given by
\begin{equation} \label{eq:Simu3:Bandwidth}
\Delta f = \frac{c}{2 z_s \sin \theta}.
\end{equation}
When recording the pressure at a fixed point and for a moving source (e.g. a landslide), the bandwidth $\Delta f$ should vary with time.
In \cite{caplan-auerbach_hydroacoustic_2014}, spectrograms computed from several fixed hydrophone show interference patterns. 
The measured bandwidths evolve with time, and are consistent with the theoretical value predicted by Equation \eqref{eq:Simu3:Bandwidth}.
These results suggest that the Lloyd mirror effect could be used to detect submarine landslides and to recover their velocity.

\begin{figure}
	\centering
    \includegraphics[width=0.6\textwidth]{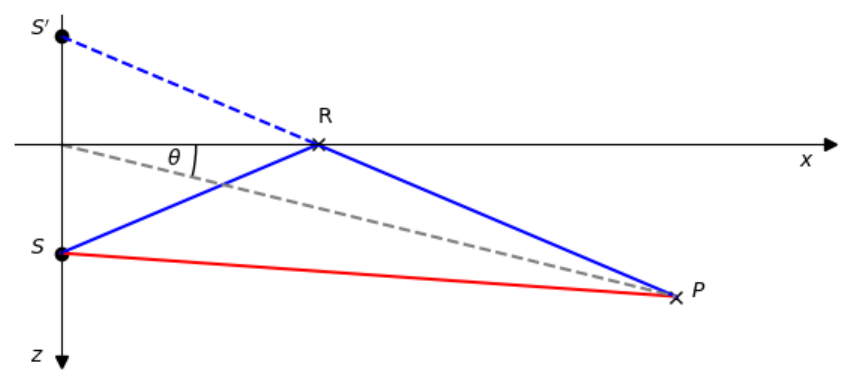}
	\caption{The direct path (in red) and the reflected path (in blue) connecting the source $S$ to a point $P$. The location of the reflection (point $R$) is deduced from the image source $S'$. The declination angle $\theta$ is also indicated.}
	\label{fig:LLoyd}
\end{figure}

\subsubsection{Simulation description and result}

\begin{figure}
	\centering
    \includegraphics[width=\textwidth]{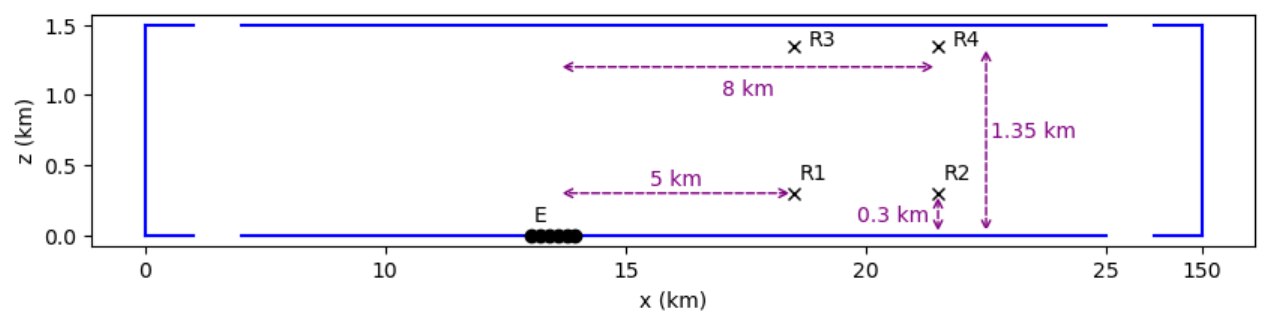}
	\caption{The domain for the third simulation: acoustic waves and tsunami generation by a submarine landslide. The source is in black and the four receivers are indicated with 'x'. } 
	\label{fig:Simu3:Domain}
\end{figure}
We reproduce here the interference pattern for a static emitter in the 2D case with a flat seabed. 
Since the source is static, the bandwidth should not change with time. 
To illustrate the dependency of the interference bandwidth with the emitter-receiver distance, we record the pressure at four different locations. 
The four receivers are denoted $R_i$ with $i \in \{1,2,3,4 \}$. 
The domain, with the emitter and receivers, is shown in Figure~\ref{fig:Simu3:Domain}.
The receivers locations and the corresponding theoretical bandwidth $\Delta f$, computed with Equation \eqref{eq:Simu3:Bandwidth}, are indicated in Table~\ref{table:Simu3:Receivers}.
The buoyancy frequency $N$ is constant in this scenario. 

It remains to define the source. 
Based on in-field data \cite{caplan-auerbach_hydroacoustic_2014}, the source is assumed to be a movement of the seabed over 600 m generating a continuous range of frequencies. 
In \cite{caplan-auerbach_hydroacoustic_2014} the frequencies range up to 400 Hz, but for computational reason, we restrain the frequencies to 20 Hz maximum. 
The Lloyd-mirror effect is still visible for those lower frequencies.
The function $u_b$ is given by $ u_b(x,t) = f(x) g(t) $,  where $f$ is a Gaussian function with magnitude $A$,
\[
f(x) = A \exp \left(-s_x^2 |x - x_0|^2 \right), 
\]
and $g(t)$ is a white noise with frequencies ranging from 0 to 20 Hz, see Figure~\ref{fig:Simu3:Emitter}.
For this simulation, the computational domain is long enough to avoid non-physical reflections. 
The simulation parameters are described in Table~\ref{tab:Simu3_Param}. \\

\begin{table}[]
    \centering
    \begin{tabular}{c | l || c | l}
    Parameter       & Value     & Parameter & Value    \\
    \hline
    Domain width    &  150 km       & $A$       & 1 ms$^{-1}$   \\
    Domain height   & 1.5 km        & $s_x$     & 0.07 m$^{-1}$ \\
    \verb|Px|       & 6             & $x_0$     & 75 km         \\
    \verb|Pz|       & 6             &           &               \\
    \verb|Nx|       & 600           &           &               \\
    \verb|Nz|       & 10            &           &                 
    \end{tabular}
    \caption{Parameter values for the third simulation: acoustic waves and tsunami generation by a submarine landslide.}
    \label{tab:Simu3_Param}
\end{table}

\begin{table}
\begin{center}
\begin{tabular}{  |c||c|c|c|}
\hline
 & x (km) & z (km) & Theoretical $\Delta f$ (Hz)
\\
\hline \hline 
$R_1$	& 5 & 0.3 & 2
\\
\hline
$R_2$& 8 & 0.3 & 3
\\
\hline
$R_3$& 5 & 1.35 & 16
\\
\hline
$R_4$& 8 & 1.35 & 26
\\
\hline
\end{tabular} 
\end{center}
\caption{Interference of acoustic waves generated by a submarine landlside: coordinates of the source center and the receivers, and theoretical $\Delta f$ obtained with Equation~\eqref{eq:Simu3:Bandwidth}.}
\label{table:Simu3:Receivers}
\end{table}

The spectrograms of the recorded pressures for each receiver are shown in Figure~\ref{fig:Simu3:Receivers}.
In the four spectrograms, we see that the frequency ranges from 0 to 20 Hz, which is consistent with the source spectrogram.
Constructive and destructive interferences are also clearly visible for each receiver.
For the receivers $R_1$ and $R_2$, both at $z=0.3$ km, the measured bandwidth is approximately 2 Hz.
The receivers $R_3$ and $R_4$ are both at $z=1.35$ km and their measured bandwidth is around 5 Hz. 
For the receivers near the surface ($R_3$ and $R_4$), the measured bandwidth do not correspond to the theoretical bandwidths presented in Table~\ref{table:Simu3:Receivers}. 
One reason for this difference could be that the theoretical value is computed with the assumption of negligible reflections on the seabed.
This assumption is probably not valid for our model, as the seabed is assumed rigid.
Even though the numerical values do not correspond, we note that the measured bandwidths share some qualitative properties with the theoretical values:
(1) the bandwidth is more sensitive to the depth of the hydro-acoustic sensor than to its distance to the landslide, and
(2) the bandwidth is larger for the receivers closer to the surface.

\begin{figure}
	\centering
    \includegraphics[width=\textwidth]{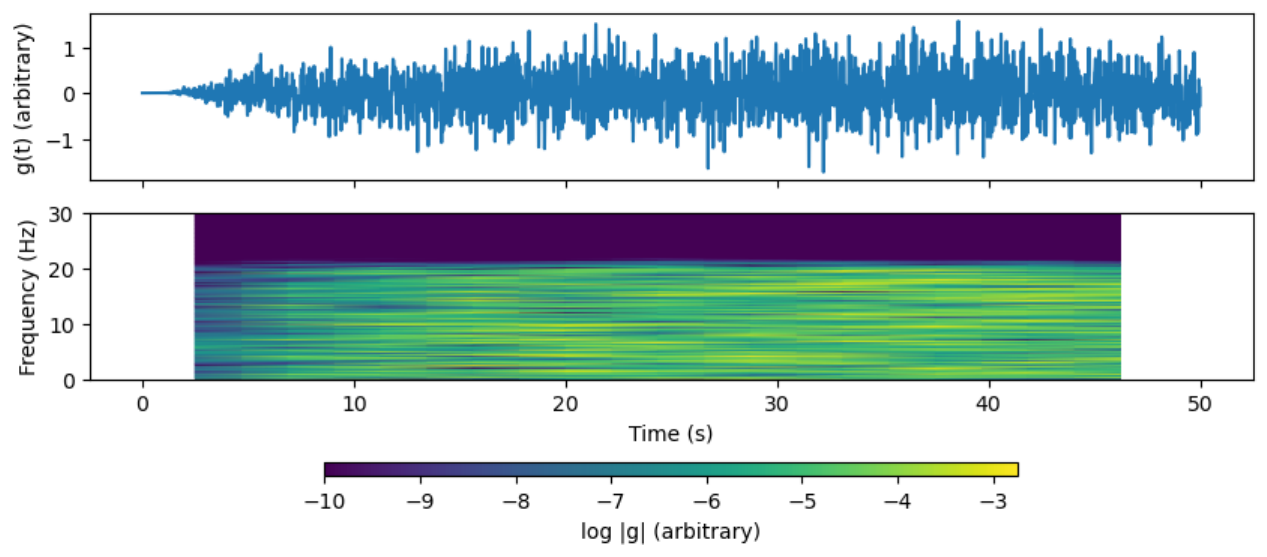}
	\caption{The function $g$ generating a uniform range of frequencies up to 20 Hz: time serie (top) and spectrogram of $g$ (bottom).
	%COMMENTAIRE {\bf donner plus d'information sur le test (landslide generated... IL SERAIT BIEN DE mettre dessous le signal enregistré sur les hydrophones R1 et R3 si c'est possible ?}
	}
	\label{fig:Simu3:Emitter}
\end{figure}
 
\begin{figure}
    \includegraphics[width=\textwidth]{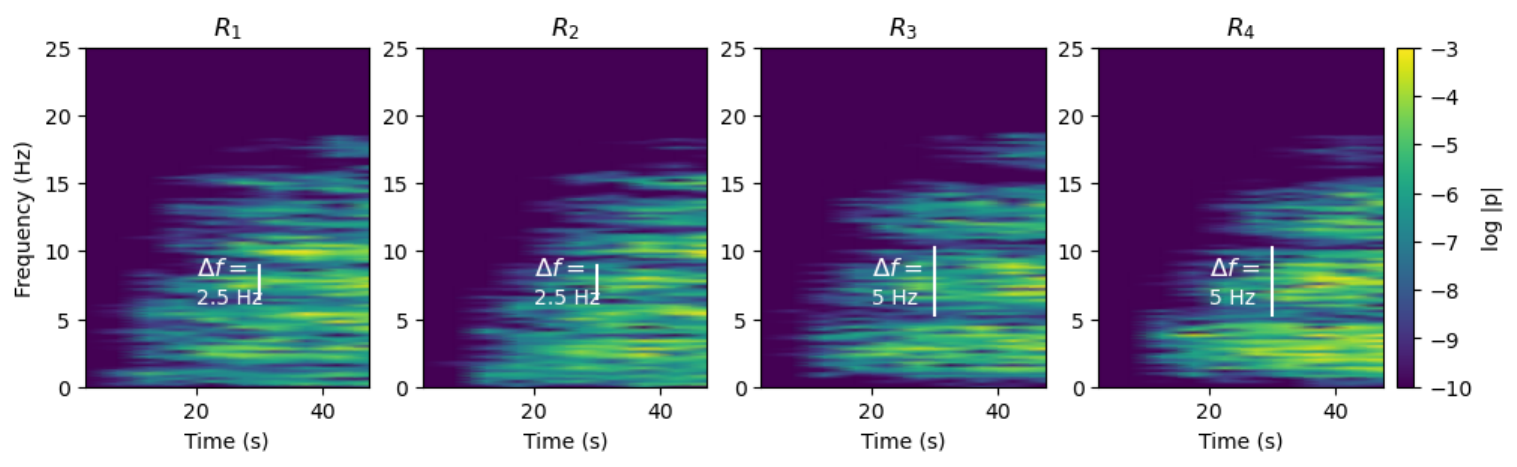}
	\caption{Interference of acoustic waves generated by an submarine landslide: each spectrogram corresponds to one receiver. The coordinates for each receiver are, in km: $R_1$=(5,0.3); $R_2$=(8,0.3); $R_3$=(5,1.35) and $R_4$=(8,1.35). The measured bandwidth is indicated in white. %{\bf Expliquer la différence entre la première et deuxième ligne.
	} 
	\label{fig:Simu3:Receivers}
\end{figure}

In addition to the acoustic waves forming complex patterns, a tsunami is also generated. 
We show in Figure~\ref{fig:Simu3:Tsunami} the free-surface vertical displacement at a point 1.5 km away from the source center and the spectrogram of the signal. The tsunami is characterized on the spectrogram by the presence of a low frequency (below 5 Hz) component. We note that the low frequency is more prominent in the tsunami spectrogram than in the pressure spectrograms (see Figure \ref{fig:Simu3:Receivers}).
As in the previous tsunami simulation (see Figure \ref{fig:Comparison:Sammarco}), the curve shows a combination of high-frequency components starting at early times ( $t\sim 10$~s ) and a lower-frequency component starting at later time ($t\sim 40$~s). 
The high-frequency components correspond to the acoustic waves and the low-frequency one is the tsunami passing through the point.

\begin{figure}
	\centering
    \includegraphics[width=\textwidth]{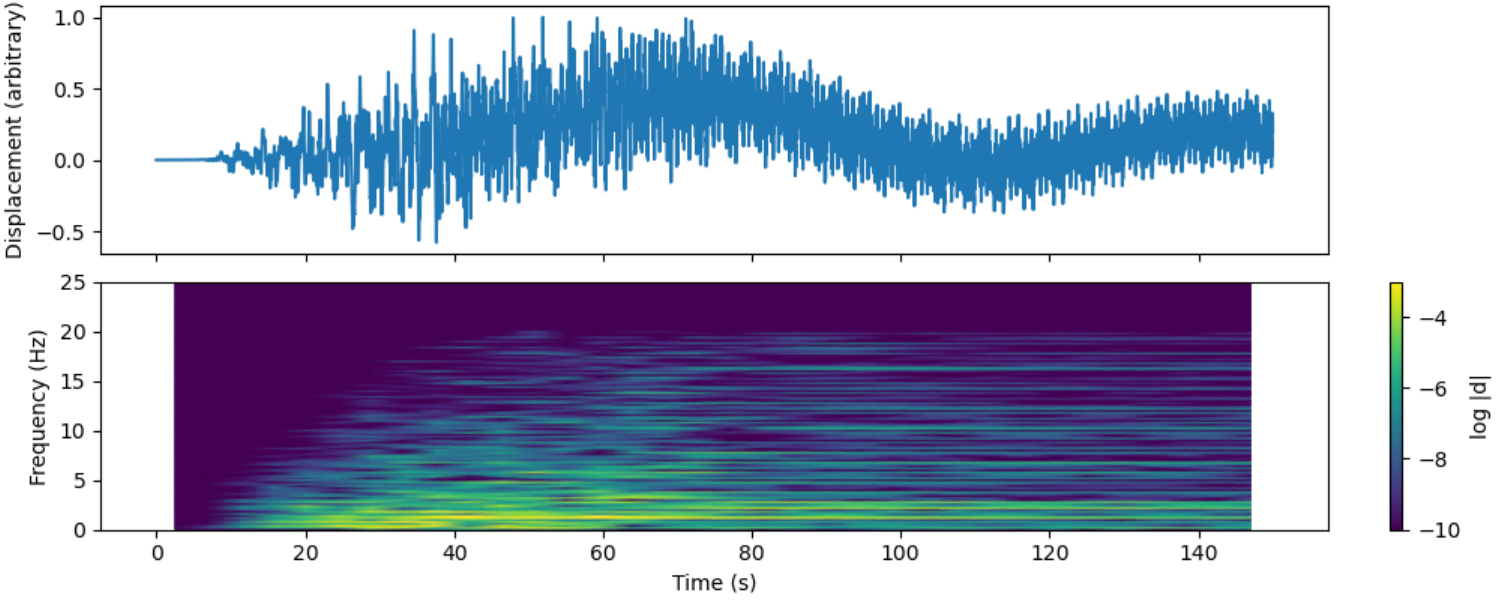}
	\caption{Tsunami generated by a submarine landslide: time series (top) and spectrogram (bottom) of the surface vertical displacement for a point $x=1.5$~km away from the source center.} 
	\label{fig:Simu3:Tsunami}
\end{figure}

%%%%%%%%%%%%%%%%%%%%%%%%%%%%%%%%%%%%%%%%%%%%%%%%%%%%%%%%%%%%%

%%%%%%%%%%%%%%%%%      SECTION    %%%%%%%%%%%%%%%%%%%%%%%%%%%

%%%%%%%%%%%%%%%%%%%%%%%%%%%%%%%%%%%%%%%%%%%%%%%%%%%%%%%%%%%%%

\section{Conclusion and future work}
In this work, we have presented two different formulations for a model describing the propagation of acoustic-gravity waves in a stratified ocean.
The novel potential-based formulation is easier to handle from a mathematical point of view since it requires no essential boundary condition. 
Moreover, it offers several advantages for the numerical approximations: there are only 2 unknowns even for 3D problems, and the formulations includes Neumann-type boundary conditions, easier to implement than the boundary condition of the velocity-field formulation -- in particular when the seabed is not flat. 

We have first shown the well-posedness of both formulations. For the velocity-field formulation, the study was complicated by the presence of a non-homogeneous boundary condition of Dirichlet type. 
To show the existence of solution, an equivalent dissipative problem was introduced. 
The equivalence between the velocity-field and potential-based formulations was then proved by using energy estimates. 
A discretization of both formulations using the spectral element method was presented, and the schemes were validated on a two-dimensional numerical test case. 
The test case also illustrated the equivalence between both formulation. 
We then used the relation between the velocity and the generalized potential to study the error made by the hypothesis of an irrotational flow when the fluid is not barotropic.
%Our numerical experiments indicated that the error is not uniformly small in time and space, and that it could accumulate for long times near the source. 
Our numerical experiments suggest that the assumption is  well-justified, but that it is not valid uniformly in space and time.
Particular types of source can lead to a difference between the irrotational and the general model that increases in time on some very located portions of the domain.
%The numerical illustration suggests that the potential-based formulation is better suited to numerical approximations, in particular for 3D problems or when the seabed is not flat. 
Finally, we presented a simulation bringing to light the interference pattern caused by the surface reflection of hydro-acoustic waves created by submarine landslides. 
Generation of hydro-acoustic waves by landslides is a topic that has not been very much explored yet, even though the analysis of those hydro-acoustic waves may provide a unique tool to detect and characterize submarine landslides and the potential hazard related to generated tsunamis. 
Notably, numerical experiments are very scarce.
%The simulation presented here is a first step towards a landslide simulation, eventually leading to a better understanding of the generated waves. 

The potential-based formulation could offer a new approach for the study of the Galbrun equation. 
Indeed, as mentioned in the introduction, the velocity-field problem can be seen as the Galbrun equation with no mean flow. 
For the Galbrun equation with a mean flow, the choice of the correct functional framework for the analysis is still an open question. 
It could be interesting to investigate whether a potential-based formulation could be obtained for the Galbrun equation with a mean flow. 
Moreover, from a numerical point of view, the transient problem with a mean flow and the harmonic problem -- even without mean flow -- are known to present spurious modes when discretized in a simple way \cite{bonnet-ben_dhia_time-harmonic_2007, bonnet-ben_dhia_regularisation_2006}.
One could study the discretization of the potential-based formulation in the harmonic regime and check whether spurious modes are present.

%Finally, it should be noted that the equation \eqref{eq:Intro_UPhi} relating the velocity to the potential yields a natural splitting between the irrotational and the rotational components of the velocity. 
%The potential-based formulation could thus be used to study the contribution of the rotational component in cases where is is usually neglected  as done here for the case of earthquake-triggered tsunami and acoustic waves. 
Our simulations of a schematic landslide source and of the associated Lloyd mirror effect open new avenues to simulate in the same framework tsunamis and hydro-acoustic waves generated by landslides
%{\bf il faudrait dire ici que dans notre cas, la non-linéarité est négligée pour la propagation du tsunami, c'est bien ca ?} 
in the linear approximation. 
Such simulations could ultimately provide insight into the most appropriate hydro-acoustic sensor configuration in the field, making it possible to detect these events.

%%-----------------------------
%%      your bibliography
%%-----------------------------

\bibliographystyle{plain}
\bibliography{bib_analyse}

\begin{thebibliography}{10}

\bibitem{auclair_theory_2021}
F.~Auclair, L.~Debreu, E.~Duval, M.~Hilt, P.~Marchesiello, E.~Blayo, F.~Dumas,
  and Y.~Morel.
\newblock Theory and analysis of acoustic-gravity waves in a free-surface
  compressible and stratified ocean.
\newblock {\em Ocean Modelling}, 168:101900, December 2021.

\bibitem{bonnet-ben_dhia_regularisation_2006}
Anne-Sophie Bonnet-Ben~Dhia, Kamel Berriri, and Patrick Joly.
\newblock Régularisation de l'équation de {Galbrun} pour l'aéroacoustique en
  régime transitoire.
\newblock {\em Revue Africaine de Recherche en Informatique et Mathématiques
  Appliquées}, Volume 5, Special Issue TAM'05:1855, September 2006.

\bibitem{bonnet-ben_dhia_time-harmonic_2007}
Anne-Sophie Bonnet-Ben~Dhia, Eve-Marie Duclairoir, Guillaume Legendre, and
  Jean-François Mercier.
\newblock Time-harmonic acoustic propagation in the presence of a shear flow.
\newblock {\em Journal of Computational and Applied Mathematics},
  204(2):428--439, July 2007.

\bibitem{brezzi_mixed_1991}
Franco Brezzi and Michel Fortin.
\newblock {\em Mixed and hybrid finite element methods}.
\newblock Number~15 in Springer series in computational mathematics. Springer,
  New York Berlin Heidelberg, 1991.

\bibitem{caplan-auerbach_hydroacoustic_2014}
J.~Caplan-Auerbach, R.~P. Dziak, D.~R. Bohnenstiehl, W.~W. Chadwick, and T.-K.
  Lau.
\newblock Hydroacoustic investigation of submarine landslides at {West} {Mata}
  volcano, {Lau} {Basin}.
\newblock {\em Geophysical Research Letters}, 41(16):5927--5934, August 2014.

\bibitem{caplan-auerbach_hydroacoustic_2001}
Jacqueline Caplan-Auerbach, Christopher~G. Fox, and Frederick~K. Duennebier.
\newblock Hydroacoustic detection of submarine landslides on {Kilauea}
  {Volcano}.
\newblock {\em Geophysical Research Letters}, 28(9):1811--1813, 2001.

\bibitem{cecioni_tsunami_2014}
C.~Cecioni, G.~Bellotti, A.~Romano, A.~Abdolali, P.~Sammarco, and L.~Franco.
\newblock Tsunami {Early} {Warning} {System} based on {Real}-time
  {Measurements} of {Hydro}-acoustic {Waves}.
\newblock {\em Procedia Engineering}, 70:311--320, 2014.

\bibitem{cohen_higher-order_2001}
Gary~C Cohen.
\newblock {\em Higher-{Order} {Numerical} {Methods} for {Transient} {Wave}
  {Equations}}.
\newblock Scientific {Computation}. Springer Berlin, Heidelberg, 2001.

\bibitem{dautray_mathematical_2000}
Robert Dautray and Jacques-Louis Lions.
\newblock {\em Mathematical analysis and numerical methods for science and
  technology. 5: {Evolution} problems}, volume~5.
\newblock Springer, Berlin Heidelberg, nachdr. edition, 2000.

\bibitem{dubois_acoustic_2023}
Juliette Dubois, Sébastien Imperiale, Anne Mangeney, François Bouchut, and
  Jacques Sainte-Marie.
\newblock Acoustic and gravity waves in the ocean: a new derivation of a linear
  model from the compressible {Euler} equation.
\newblock {\em Journal of Fluid Mechanics}, 970, 2023.

\bibitem{evans_partial_2004}
Lawrence~C. Evans.
\newblock {\em Partial {Differential} {Equations}}.
\newblock Graduate {Studies} in {Mathematics}. American Mathematical Society
  (AMS), 2004.

\bibitem{ewing_proposed_1950}
Maurice Ewing, Ivan Tolstoy, and Frank Press.
\newblock Proposed use of the {T} phase in tsunami warning systems*.
\newblock {\em Bulletin of the Seismological Society of America}, 40(1):53--58,
  January 1950.

\bibitem{eyov_progressive_2013}
Erez Eyov, Assaf Klar, Usama Kadri, and Michael Stiassnie.
\newblock Progressive waves in a compressible-ocean with an elastic bottom.
\newblock {\em Wave Motion}, 50(5):929--939, July 2013.

\bibitem{international_association_for_the_properties_of_water_and_steam_iapws_2009}
International~Association for the Properties~of Water {and}~Steam.
\newblock {IAPWS} {SR7}-09, {Supplementary} {Release} on a {Computationally}
  {Efficient} {Thermodynamic} {Formulation} for {Liquid} {Water} for
  {Oceanographic} {Use}, 2009.

\bibitem{gill_atmosphere-ocean_1982}
Adrian~E. Gill.
\newblock {\em Atmosphere-ocean dynamics}.
\newblock International geophysics series. Academic Press, New York, 1982.

\bibitem{girault_finite_1986}
Vivette Girault and Pierre-Arnaud Raviart.
\newblock {\em Finite element methods for {Navier}-{Stokes} equations: theory
  and algorithms}.
\newblock Number~5 in Springer series in computational mathematics. Springer,
  Berlin Heidelberg, 1. ed. 1986. edition, 1986.

\bibitem{gomez_near_2021}
Bernabe Gomez and Usama Kadri.
\newblock Near real-time calculation of submarine fault properties using an
  inverse model of acoustic signals.
\newblock {\em Applied Ocean Research}, 109:102557, 2021.

\bibitem{grubb_distributions_2009}
Gerd Grubb.
\newblock {\em Distributions and operators}.
\newblock Number 252 in Graduate texts in mathematics. Springer, New York,
  2009.
\newblock OCLC: ocn258078542.

\bibitem{hagg_well-posedness_2021}
Linus Hägg and Martin Berggren.
\newblock On the well-posedness of {Galbrun}'s equation.
\newblock {\em Journal de Mathématiques Pures et Appliquées}, 150:112--133,
  2021.

\bibitem{jensen_computational_2011}
Finn~B. Jensen, William~A. Kuperman, Michael~B. Porter, and Henrik Schmidt.
\newblock {\em Computational {Ocean} {Acoustics}}.
\newblock Springer New York, New York, NY, 2011.

\bibitem{joly_effective_2008}
Patrick Joly, C.~Tsogka, G.~Derveaux, and J.~Rodriguez.
\newblock {\em Effective {Computational} {Methods} for {Wave} {Propagation},
  {Part}. 3 {Numerical} {Methods} for {Elastic} {Wave} {Propagation}}.
\newblock Chapman and Hall/CRC, 1 edition, 2008.

\bibitem{komatitsch_introduction_1999}
Dimitri Komatitsch and Jeroen Tromp.
\newblock Introduction to the spectral element method for three-dimensional
  seismic wave propagation.
\newblock {\em Geophysical Journal International}, 139(3):806--822, December
  1999.

\bibitem{komatitsch_spectral_1998}
Dimitri Komatitsch and Jean-Pierre Vilotte.
\newblock The spectral element method: {An} efficient tool to simulate the
  seismic response of {2D} and {3D} geological structures.
\newblock {\em Bulletin of the Seismological Society of America},
  88(2):368--392, April 1998.

\bibitem{lions_non-homogeneous_1972}
J.~L. Lions and E.~Magenes.
\newblock {\em Non-{Homogeneous} {Boundary} {Value} {Problems} and
  {Applications}}.
\newblock Springer Berlin Heidelberg, Berlin, Heidelberg, 1972.

\bibitem{longuet-higgins_theory_1950}
M.~S. Longuet-Higgins.
\newblock A {Theory} of the {Origin} of {Microseisms}.
\newblock {\em Philosophical Transactions of the Royal Society of London.
  Series A, Mathematical and Physical Sciences}, 243(857):1--35, 1950.

\bibitem{maeder_90_2020}
Marcus Maeder, Gwénaël Gabard, and Steffen Marburg.
\newblock 90 {Years} of {Galbrun}’s {Equation}: {An} {Unusual} {Formulation}
  for {Aeroacoustics} and {Hydroacoustics} in {Terms} of the {Lagrangian}
  {Displacement}.
\newblock {\em Journal of Theoretical and Computational Acoustics},
  28(04):2050017, December 2020.

\bibitem{sammarco_depth-integrated_2013}
P.~Sammarco, C.~Cecioni, G.~Bellotti, and A.~Abdolali.
\newblock Depth-integrated equation for large-scale modelling of low-frequency
  hydroacoustic waves.
\newblock {\em Journal of Fluid Mechanics}, 722:R6, 2013.

\end{thebibliography}

%%%%%%%%%%%%%%%%%%%%%%%%%%%%%%%%%%%%%%%%%%
%%%%%%%%%%%%% APPENDIX %%%%%%%%%%%%%%%%%%%
%%%%%%%%%%%%%%%%%%%%%%%%%%%%%%%%%%%%%%%%%%

\section{Appendix}

\subsection{A realistic profile for the buoyancy frequency $N$} \label{sec:A:Buoyancy}
In \cite{dubois_acoustic_2023}, we have shown how profiles for $\rho(z), c(z)$ can be computed from a given temperature profile. 
For completeness we recall here the equations. 
%We introduce the initial pressure $p_0$ and temperature $T_0$.
The compressible Euler equations under the assumption of an initial state at equilibrium yield the following system of equations for the pressure $p_0$, temperature $T_0$ and density $\rho_0$, 
\begin{align}
\nabla p_0 = - \rho_0 g \, \e{z}, \quad 
\rho_0 = f_{\rho}(p_0, T_0), \quad
p_0(H) = p^a.
\label{eq:A:Equilibrium}
\end{align}
For the sound speed we use the expression $c_0=c(p_0,T_0)$ given in \cite{international_association_for_the_properties_of_water_and_steam_iapws_2009}. 
Hence, if $T_0(z)$ and an equation of state $f_\rho$ is given, the system
\begin{align}
\frac{d p_0}{d z} 
= -g f_{\rho}(p_0, T_0), 
& \quad  z \in (0, H),
\\
p_0 = p^a,  & \quad  z = H.
\label{eq:A:Hydrostat}
\end{align}
is solved to obtain $p_0$. Then $\rho_0$ and $c_0$ are computed from \eqref{eq:A:Equilibrium} and \cite{international_association_for_the_properties_of_water_and_steam_iapws_2009}. 
The equation of state $f_\rho$ is given in \cite{international_association_for_the_properties_of_water_and_steam_iapws_2009}. 
The differential equation for the pressure \eqref{eq:A:Hydrostat} is numerically solved for the temperature profile shown in Figure \ref{fig:Background_T}, and we obtain the profiles $\rho_0(z)$ and $c_0(z)$ shown in Figure \ref{fig:Background_rho} and \ref{fig:Background_C}. 
The buoyancy frequency $N$ is then computed from \eqref{def_of_N} and we obtain the profile shown in Figure \ref{fig:Background_N}.

\begin{figure}
\centering
\begin{subfigure}{0.255\textwidth}
	\includegraphics[width=\textwidth]{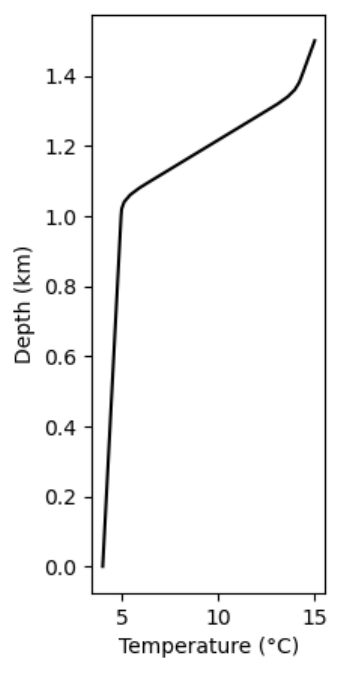}
	\caption{Temperature profile.}
	\label{fig:Background_T}
\end{subfigure}
%\hfill
\begin{subfigure}{0.20\textwidth}
	\includegraphics[width=\textwidth]{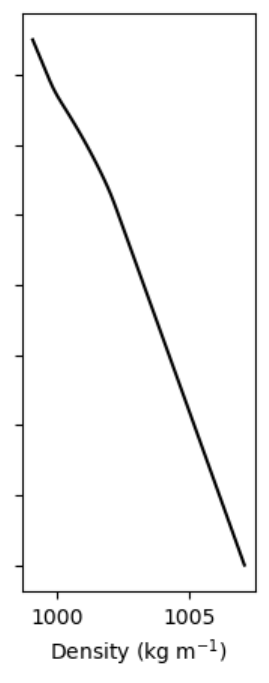}
	\caption{Density profile.}
	\label{fig:Background_rho}
\end{subfigure}
%\hfill
\begin{subfigure}{0.20\textwidth}
	\includegraphics[width=\textwidth]{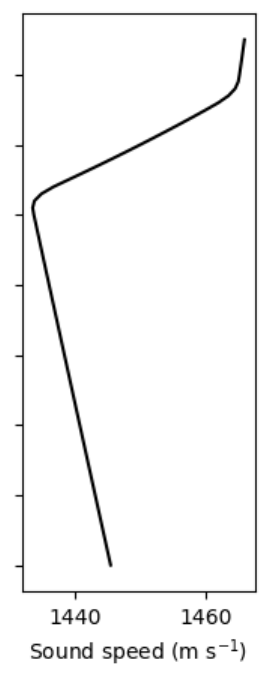}
	\caption{Sound speed profile.}
	\label{fig:Background_C}
\end{subfigure}
%\hfill
\begin{subfigure}{0.224\textwidth}
	\includegraphics[width=\textwidth]{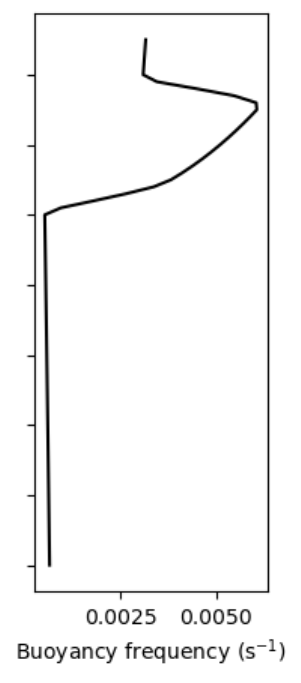}
	\caption{Buoyancy frequency.}
	\label{fig:Background_N}
\end{subfigure}
\caption{Profiles used in Section \ref{sec:Simu2}: (a)Temperature $T(z)$, (b) density $\rho_0(z)$ and (c) sound speed $c_0(z)$ used for the computation of (d) the buoyancy frequency $N(z)$.}
\label{fig:Background_profiles}
\end{figure}

\subsection{The irrotational flow assumption } \label{sec:A:Irrotational}
\begin{figure}
    \centering
    \includegraphics[width=\textwidth]{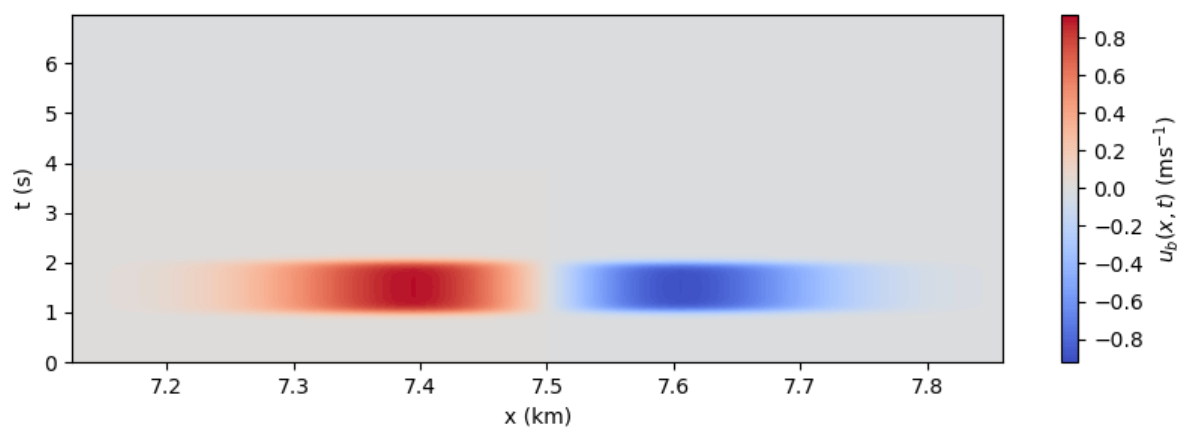}
    \caption{The source $u_b(x,t)$ over time and space.}
    \label{fig:A:Source}
\end{figure}

We study the same scenario as in Section \ref{sec:Simu2} but with a different function $g(t)$ representing the time dependency of the source $u_b$.
Here $g(t)$ is a smoothed rectangle,
\[
g(t) = \frac{1}{1+e^{- s_t (t-t_0)}}
- \frac{1}{1+e^{- s_t (t - t_0 - r_t)}},
\]
with $s_t = 20$~s$^{-1}$, $t_0=1$~s and $r_t=1$~s.
The difference with Section \ref{sec:Simu2} is that the time average of the source $u_b$ is not zero. 
The function $u_b(x,t)$ is shown in Figure \ref{fig:A:Source}.
The simulation uses the topography described by Equation \eqref{eq:Simu2:Topo}  and the profile $N(z)$ described in Section \ref{sec:A:Buoyancy}.
Snapshots of the remainder $|U_r|$ (see Figure \ref{fig:A:Ratio}) show that even though being small, the values near the source increase with time.  
\begin{figure}
    \centering
    %%%
    \begin{subfigure}{\textwidth}
	\includegraphics[width=\textwidth]{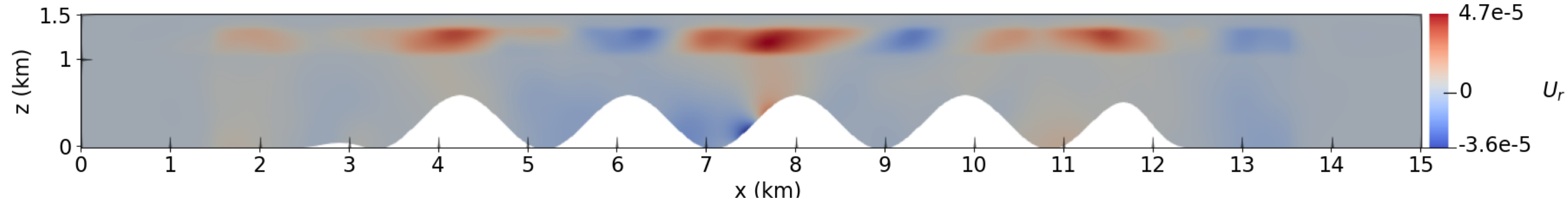}
	\caption{$t=8$~s}
    \end{subfigure}
    %%%
    %\hfill
    %%%
    \begin{subfigure}{\textwidth}
	\includegraphics[width=\textwidth]{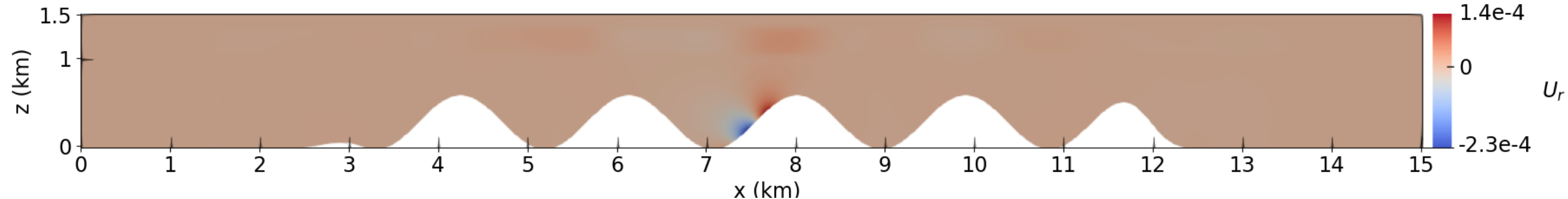}
	\caption{$t=60$~s}
    \end{subfigure}
    %%%
    %\hfill
    %%%
    \begin{subfigure}{\textwidth}
	\includegraphics[width=\textwidth]{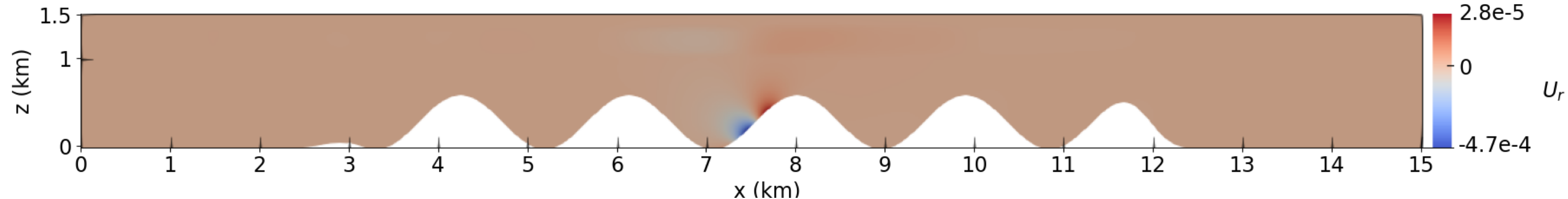}
	\caption{$t=120$~s}
    \end{subfigure}
    %%%
    \caption{Snapshots of $U_r$ at various times: (a) $t=8$~s , (b) $t=60$~s and (c) $t=120$~s. Note how the color scale changes at each snapshot. }
    \label{fig:A:Ratio}
\end{figure}

\end{document}